\numberwithin{equation}{section}
\numberwithin{figure}{section}
\theoremstyle{plain}
\newtheorem{thm}{\protect\theoremname}
  \theoremstyle{plain}
  \newtheorem{prop}[thm]{\protect\propositionname}
  \theoremstyle{plain}
  \newtheorem{lem}[thm]{\protect\lemmaname}
  \theoremstyle{plain}
  \newtheorem{cor}[thm]{\protect\corollaryname}
  \theoremstyle{remark}
  \newtheorem{rem}[thm]{\protect\remarkname}
  \theoremstyle{definition}
  \theoremstyle{plain}
  \theoremstyle{definition}
  \newtheorem{defn}[thm]{\protect Definition}
  \providecommand{\corollaryname}{Corollary}
  \providecommand{\examplename}{Example}
  \providecommand{\lemmaname}{Lemma}
  \providecommand{\propositionname}{Proposition}
  \providecommand{\questionname}{Question}
  \providecommand{\remarkname}{Remark}
  \providecommand{\theoremname}{Theorem}
\DeclareMathOperator{\Spec}{Spec}
\DeclareMathOperator{\Aut}{Aut}
\DeclareMathOperator{\GL}{GL}
\DeclareMathOperator{\Ker}{Ker}
\DeclareMathOperator{\Bir}{Bir}
\DeclareMathOperator{\KL}{\Gamma L}
\DeclareMathOperator{\CoH}{H}
\DeclareMathOperator{\Imag}{Im}
\DeclareMathOperator{\Cent}{C}
\DeclareMathOperator{\Z}{Z}
\DeclareMathOperator{\Diff}{Diff}
\DeclareMathOperator{\T}{T}
\DeclareMathOperator{\rk}{rk}
\begin{document}

\title[The birational automorphism group is nilpotently Jordan]{Finite subgroups of the birational automorphism group are `almost' nilpotent of class at most two}

\author{Attila Guld}
\email{guld.attila@renyi.mta.hu}

\thanks{The research was partly supported by the National Research,Development and Innovation Office (NKFIH) Grant No. K120697. 
The project leading to this application has received funding from the European Research Council (ERC) 
under the European Union's Horizon 2020 research and innovation programme (grant agreement No 741420).}

\address{
R\'enyi Alfr\'ed Matematikai Kutat\'oint\'ezet\\
Re\'altanoda utca 13-15.\\
Budapest, H1053\\
Hungary}

\begin{abstract}
We call a group $G$ nilpotently Jordan of class at most $c$ $(c\in\mathbb{N})$ 
if there exists a constant $J\in\mathbb{Z}^+$ such that every finite subgroup $H\leqq G$ contains a nilpotent subgroup $K\leqq H$ of class at most $c$ and index at most $J$.\\
We show that the birational automorphism group of a variety over a field of characteristic zero is nilpotently Jordan of class at most two. 
\end{abstract}

\keywords{birational automorphism group, birational selfmap, nilpotent group, Jordan group}
\maketitle

\section{Introduction}

\begin{defn}
\label{nilpJord}
A group $G$ is called Jordan, solvably Jordan or nilpotently Jordan of class at most $c$ ($c\in\mathbb{N}$) if there exists a constant $J\in\mathbb{Z}^+$ such that every finite subgroup $H\leqq G$ has a subgroup $K\leqq H$ such that
$|H:K|\leqq J$ and $K$ is Abelian, solvable or nilpotent of class at most $c$, respectively.
\end{defn}

\begin{thm}
\label{main}
The birational automorphism group of a variety over a field of characteristic zero is nilpotently Jordan of class at most two.
\end{thm}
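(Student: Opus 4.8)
The plan is to analyse a finite subgroup $G\leqq\Bir(X)$ by d\'evissage along the maximal rationally connected fibration, reducing matters to two well-understood building blocks, birational automorphisms of rationally connected varieties and automorphisms of abelian varieties, and then to control the extensions that glue these pieces together. First I would make the standard reductions: $\Bir(X)$ depends only on $k(X)$ and a given finite subgroup is cut out over a subfield finitely generated over $\mathbb{Q}$, so one may assume $k$ is finitely generated over $\mathbb{Q}$ and, embedding it into $\mathbb{C}$, that $k=\mathbb{C}$; resolving singularities one may also assume $X$ smooth and projective.

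Let $\pi\colon X\dashrightarrow B$ be the MRC fibration, so that $B$ is non-uniruled and the generic fibre $X_{\eta}$ is rationally connected over $k(B)$. Since $\pi$ is canonical, $\Bir(X)$ acts on $B$, giving a homomorphism $\Bir(X)\to\Bir(B)$ with kernel $\Bir(X/B)=\Bir_{k(B)}(X_{\eta})$; restricting to $G$ gives $1\to G_{0}\to G\to\overline{G}\to 1$ with $G_{0}\leqq\Bir_{k(B)}(X_{\eta})$ and $\overline{G}\leqq\Bir(B)$. For the kernel, the Jordan property for birational automorphism groups of rationally connected varieties over a field of characteristic zero, due to Prokhorov and Shramov and rendered unconditional by Birkar's boundedness theorem for Fano varieties, applied over the field $k(B)$, provides a normal abelian subgroup $A_{0}\trianglelefteq G_{0}$ whose index is bounded in terms of $\dim X_{\eta}$.

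For $\overline{G}\leqq\Bir(B)$ I would induct on $\dim B$ by means of the Albanese map $a\colon B\dashrightarrow\mathrm{Alb}(B)$, which being a birational invariant is $\Bir(B)$-equivariant. This gives $1\to M\to\Bir(B)\to\Aut(\mathrm{Alb}(B))$ with $M\leqq\Bir$ of the generic fibre of $a$, which is again non-uniruled and, as soon as $q(B)>0$, of strictly smaller dimension. Finite subgroups of $\Aut(\mathrm{Alb}(B))=\mathrm{Alb}(B)(\mathbb{C})\rtimes\Aut_{\mathrm{grp}}(\mathrm{Alb}(B))$ have an abelian subgroup of index bounded in terms of $\dim B$, since the translation subgroup is abelian and $\Aut_{\mathrm{grp}}(\mathrm{Alb}(B))$ embeds into $\GL_{m}(\mathbb{Z})$ with $m=2\dim B$, whose finite subgroups have bounded order by Minkowski's theorem; the base case $q(B)=0$ is dealt with by the known structure of $\Bir$ of a non-uniruled variety of vanishing irregularity.

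Assembling these, every finite $G\leqq\Bir(X)$ possesses, up to index bounded in terms of $\dim X$, a normal series whose successive factors are abelian, of the translation type or the rationally connected type, or bounded. \textbf{The main obstacle}, and the reason the conclusion is ``class at most two'' rather than the Jordan property itself, is that an extension of an abelian group by an abelian group, or of a nilpotent group of class at most two by an abelian one, need not be nilpotent at all, let alone of a priori bounded class ($C_{p}\rtimes C_{p-1}$ has no abelian subgroup of index below $p-1$); so the geometry must be exploited to restrict which extensions can occur. The core of the argument is to show that, after passing to one further subgroup of bounded index, the distinguished abelian normal pieces become \emph{central}: the action of the base factor $\overline{G}$ on the fibre factor $G_{0}$, induced by the action on $k(B)$, is trivial on $A_{0}$, because the fibrewise birational transformations that are not genuine automorphisms of $X_{\eta}$ contribute only ``scalar-type'' elements, which are untouched by the base translations. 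All remaining noncommutativity then sits in the extension cocycle, and a central extension of an abelian group by an abelian group is nilpotent of class at most two, which is exactly the assertion. Optimality of the bound is witnessed by Zarhin's Heisenberg subgroups of $\Bir(\mathbb{P}^{1}\times E)$, which in addition show precisely where the class-two behaviour is manufactured in the d\'evissage: in the extension coming from a $\mathbb{P}^{1}$-bundle over an abelian variety.
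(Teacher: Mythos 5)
Your d\'evissage along the MRC fibration, the reduction to $\mathbb{C}$, and the use of the Prokhorov--Shramov Jordan theorems for the rationally connected fibre and the non-uniruled base all match the paper's setup (its Theorem \ref{AbyA} produces exactly the abelian-by-abelian structure you describe; the Albanese induction you sketch for $\Bir(B)$ is essentially a re-derivation of the cited non-uniruled case). You also correctly identify that the whole difficulty is to upgrade the abelian-by-abelian extension to a \emph{central} one. But that is precisely where your argument has a genuine gap: you assert that, after passing to a bounded-index subgroup, the conjugation action of the base factor on the abelian normal subgroup $A_0\leqq G_0\leqq\Bir_{k(B)}(X_\eta)$ becomes trivial ``because the fibrewise birational transformations \dots contribute only scalar-type elements, which are untouched by the base translations.'' As stated this is not an argument: $A_0$ is a group of birational self-maps of the generic fibre, and there is no ambient linear structure in which ``scalar-type'' has a meaning, nor any a priori reason why lifts of elements of $\overline{G}$ should centralize $A_0$. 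An abelian-by-abelian finite group need not contain any bounded-index nilpotent subgroup (your own example $C_p\rtimes C_{p-1}$), so this step cannot be finessed.

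The paper's entire technical content is devoted to manufacturing the linear structure your sentence presupposes. It runs an equivariant MMP relative to the MRC base, uses Birkar's boundedness of Fano varieties and the Hacon--McKernan pullback of rationally connected fibrations to locate a proper invariant subvariety, and iterates taking normal bundles (Lemmas \ref{F}, \ref{W}, \ref{PullBackF}, \ref{YW}) to produce a vector bundle $E_Y\to Y$ on which a bounded-index subgroup $G_0$ acts faithfully by bundle automorphisms while the induced action on $Y$ is abelian. Passing to the generic fibre gives a faithful \emph{semilinear} representation $G_0\hookrightarrow\KL(n,k(Y))$ whose image in $\Aut k(Y)$ is abelian and fixes all roots of unity. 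Only then does the ``scalar'' mechanism work (Theorem \ref{groupmain}): a characteristic abelian subgroup $A$ of the linear part decomposes $V$ into eigenspaces on which it acts by roots of unity, a bounded-index subgroup preserves each eigenspace, and since the field automorphisms fix roots of unity, $A$ is central there; Lemma \ref{DN2} (which also needs the bound of Theorem \ref{m} on the number of generators, another ingredient absent from your sketch) then makes the quotient abelian up to bounded index. Without constructing such a faithful (semi)linear action your centrality claim is unsupported, and with it the proof of the class-two bound. A secondary point: your bounds cannot depend only on $\dim X$ --- the Jordan constant for the non-uniruled base depends on the birational class of $Z$ --- though this is harmless for the statement as formulated, since $Z$ is determined by the birational class of $X$.
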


\subsection{History}
In the following discussion we shortly sketch the history of the question of Jordan type properties in birational geometry over fields of \textit{characteristic zero}.\\ 
Research about investigating the Jordan property of the birational automorphism group of a variety was initiated by J.-P. Serre (\cite{Se09}) and V. L. Popov (\cite{Po11}).
 In \cite{Se09} J.-P. Serre settled the problem for the Cremona group of rank two, while  in the articles \cite{Po11},\cite{Za15} V. L. Popov and Yu. G. Zarhin solved the question for one and two dimensional varieties.
They found that the birational automorphism group of a curve or a surface is Jordan, save when the variety is birational to a direct product of an elliptic curve and the projective line. 
This later case was examined in \cite{Za15}, where -based on calculations of D. Mumford- the author was able to conclude that the birational automorphism group contains Heisenberg $p$-groups for arbitrarily large prime numbers $p$. 
Hence it does not enjoy the Jordan property.\\ 

In \cite{PS14} and \cite{PS16} Yu. Prokhorov and C. Shramov made major contributions to the subject using the arsenal of the Minimal Model Program (MMP)
 and assuming the result of the Borisov-Alexeev-Borisov (BAB) conjecture 
(which has later been verified in the celebrated article \cite{Bi16} of C. Birkar;  for a survey paper on the work of C. Birkar and its connection to the Jordan property the interested reader can consult with \cite{Ke19}). 
Amongst many interesting results Yu. Prokhorov and C. Shramov proved that the birational automorphism group of a rationally connected variety and 
the birational automorphism group of a non-uniruled variety is Jordan. To answer a question of D. Allcock, they also introduced the concept of solvably Jordan groups, 
and showed that the birational automorphism group of an arbitrary variety is solvably Jordan.\\

The landscape is strikingly similar in differential geometry. The techniques are fairly different, still the results converge to similar directions. 
In the followings we briefly review the history of the question of Jordan type properties of diffeomorphism groups of smooth compact real manifolds. (We note that there are many other interesting setups which were considered by differential geometers; 
for a very detailed account see the Introduction of \cite{MR18}.) As mentioned in \cite{MR18}, during the mid-nineties  \'E. Ghys conjectured that the diffeomorphism group of a smooth compact real manifold is Jordan, 
and he proposed this problem in many of his talks (\cite{Gh97}). The case of surfaces follows from the Riemann-Hurwitz formula (see \cite{MR10}), the case of 3-folds are more involved. 
In \cite{Zi14} B. P. Zimmermann proved the conjecture for them using the geometrization of compact 3-folds (which follows from the work of W. P. Thurston and G. Perelman). 
I. Mundet i Riera also verified the conjecture for several interesting cases, like tori, projective spaces, homology spheres and manifolds without zero Euler characteristic (\cite{MR10},\cite{MR16}, \cite{MR18}).\\ 
However, in 2014, B. Csik\'os, L. Pyber and E. Szab\'o found a counterexample (\cite{CPS14}). 
Their construction was remarkably analogous to the one of Yu. G. Zarhin. They showed that if the manifold $M$ is diffeomorphic to the direct product of the two-sphere and the two-torus 
or to the total space of any other smooth orientable two-sphere bundle over the two-torus, then the diffeomorphism group contains Heisenberg $p$-groups for arbitrary large prime numbers $p$. Hence $\Diff(M)$ cannot be Jordan.
As a consequence, \'E. Ghys improved on his previous conjecture and proposed the problem of showing that the diffeomorphism group of a compact real manifold is nilpotently Jordan (\cite{Gh15}). 
As the first trace of evidence,  I. Mundet i Riera and  C. Sa\'ez-Calvo showed that the diffeomorphism group of a 4-fold is nilpotently Jordan of class at most 2 (\cite{MRSC19}). 
Their proof uses results form the classification theorem of finite simple groups.\\

In summary, we have seen that the birational automorphism group of many varieties are Jordan, however
even amongst surfaces one can find a counterexample. On the other hand, if we replace the Abelian
property with the slightly weaker solvability property, then the birational automorphism group
of every variety enjoys the solvably Jordan property. The picture is fairly similar in differential geometry as well.
These facts naturally raise the question that how much we can strengthen the condition of solvability.\\
Motivated by these antecedents, in this article, we investigate the nilpotenly Jordan property for birational automorphism groups of varieties.
 
\subsection{Description of the proof}

In this section we briefly sketch the main ideas of the proof.\\

At various points of the proof we are allowed to make birational modifications. We can resolve singularities of varieties and we can resolve indeterminacies of rational maps.
We are also allowed to replace a finite group (which acts on a variety) by one of its subgroups of bounded index (if the bound on the index only depends on the birational class of the variety).\\

Here we present an argument which omits technical issues, hence it is not precise, however captures the core ideas behind our proof. 
(Here we will not include the steps which replace a variety by one of its birational models, 
nor the steps which replace the finite transformation group with one of its bounded index subgroups.)\\
Our argument is heavily inspired by the work of Yu. Prokhorov and C. Shramov on the Jordan property of the birational automorphism groups of rationally connected varieties (\cite{PS16}).
To be able to handle arbitrary varieties, we will consider the MRC fibration (as the initial data) and carry out a relative version of the proof which can be found in \cite{PS16}.\\
Let $X$ be a variety and $G$ be a finite subgroup of its birational automorphism group. Without loss of generality we can assume that $X$ is a complex variety (Lemma \ref{C}), 
furthermore by regularization of finite group actions on varieties, we can assume that $G$ is a finite subgroup of the biregular automorphism group (Lemma \ref{reg}).\\
In the rationally connected case Yu. Prokhorov and C. Shramov were able to find a vector space on which $G$ acts faithfully (via linear automorphisms). Than Jordan's theorem on linear groups finished their proof.
In the general case, we will look for a vector bundle on which $G$ acts faithfully via vector bundle automorphisms in such a way that the $G$-action descends to an Abelian group action on the base variety of the vector bundle (Theorem \ref{VB}).
Hence we can consider the generic fibre of the vector bundle. It is a vector space over the function field of the base variety, on which $G$ acts faithfully via semilinear automorphisms. 
Than we can finish our proof by a Jordan type theorem on semilinear groups (Theorem \ref{groupmain}).\\
Now we investigate how to find the aforementioned vector space and vector bundle.
By the help of the MMP, boundedness of Fano varieties and a technique 
to find rationally connected fibrations (which was developed by C. D. Hacon and J. McKernan in \cite{HM07}) 
Yu. Prokhorov and C. Shramov found a proper (strictly smaller dimensional) $G$-invariant rationally connected closed subvariety $Y\subsetneqq X$.
Iterating this step, they found a $G$-fixed point (a zero dimensional $G$-invariant rationally connected variety) $P\in X$. The tangent space $\T_PX$ provides the desired vector space on which $G$ acts faithfully.
In the general case, let $\phi:X\to Z$ be the MRC fibration (Theorem \ref{MRC}). By the functoriality of the MRC fibration, there is an induced $G$-action on $Z$, which makes $\phi$ $G$-equivariant (Corollary \ref{GMRC}). 
The base $Z$ is non-uniruled, hence its birational automorphism group is Jordan by another important theorem of Yu. Prokhorov and C. Shramov (\cite{PS14}).  
So we can assume that the $G$-action on $Z$ is Abelian. By the MMP, boundedness of Fano varieties and the technique to find rationally connected fibrations,
we are able to find a proper  $G$-invariant closed subvariety $Y\subsetneqq X$ with the following property.   
There exist a variety $W$ endowed with an Abelian $G$-action (where the $G$-action on $W$ derives from the $G$-action on $Z$), 
and a $G$-equivariant dominant morphism $Y\to W$ which has rationally connected general fibres (Lemma \ref{PullBackF}).
Iterating this step (Lemma \ref{Bundle1}), we find a proper $G$-invariant subvariety $Y_0\subsetneqq X$, a variety $W_0$ endowed with an Abelian $G$-action 
and a $G$-equivariant morphism $Y_0\to W_0$ such that the general fibres of the morphism are zero dimensional rationally connected varieties, i.e. rational points. Hence $Y_0$ is birational to $W_0$, hence the $G$-action on $Y_0$ is Abelian.
Taking the normal bundle of $Y_0\subsetneqq X$ results the vector bundle which we look for (Lemma \ref{MainLemma}).\\
Going backwards in the proof, now we sketch how to find the proper subvariety $Y$. As usual let us first study the case when $X$ is rationally connected. 
By the help of the MMP we can find a $G$-equivariant dominant rational map $h:X\dashrightarrow F$, where $F$ is a positive dimensional Fano variety and $h$ has rationally connected fibres,
moreover $h$ can be written as a composite of steps of the MMP, i.e. as a composite of divisorial contractions, flips and Mori fibrations. 
Using boundedness of Fano varieties and Jordan's theorem on linear groups, we can find a $G$-invariant fixed point $Q\in F$. By the technique which finds rationally connected fibrations
we can pull back this zero dimensional proper rationally connected closed subvariety $Q\in F$ to a proper $G$-invariant rationally connected closed subvariety $Y\subsetneqq X$.  
Now consider the general case. Let $\phi:X\to Z$ be the MRC fibration. The generic fibre of $\phi$ is rationally connected and the base $Z$ is non-uniruled. Therefore both of them has Jordan birational  automorphism groups.
Hence we can assume that, there is a normal Abelian subgroup $N\unlhd G$ such that the quotient group $G/N$ is Abelian, and $N$ acts trivially on $Z$ (i.e. the $G$-action on $Z$ descends to the Abelian $G/N$-action). 
(See Theorem \ref{AbyA}.)
By the help of the MMP we can find a $G$-equivariant  Mori fibration $F\to B$ such that $N$ acts trivially on the base $B$ and non-trivially on the total space $F$ (Lemma \ref{F}).
Moreover there is a dominant rational map $h:X\dashrightarrow F$, with rationally connected fibres,
which can be written as a composite of steps of the MMP, i.e. as a composite of divisorial contractions, flips and Mori fibrations. 
Using boundedness of Fano varieties and Jordan's theorem on linear groups, we can find a $G$-invariant proper subvariety $W\subsetneqq F$ on which $N$ acts trivially (Lemma \ref{W}).
By the technique 
which finds rationally connected fibrations  we can find a proper closed $G$-invariant subvariety $Y\subsetneqq X$ 
and a $G$-equivariant dominant morphism $Y\to W$ which has rationally connected general fibres (Lemma \ref{PullBackF}). This finishes the proof.\\

In the actual proof there are some technical difficulties which we need to handle. Because of the MMP we lose smoothness of varieties, however taking the normal bundle works for smooth closed subvarieties of smooth ambient varieties.
Also because of the MMP we only get rational maps instead of morphisms. We can solve these issues by frequently resolving singularities of varieties and indeterminacies of rational maps. 
To avoid some of these problems, we do not require $Y$ to be a smooth subvariety of $X$. Instead we will consider varieties and vector bundles over them, where $G$ acts faithfully on the vector bundle (Lemma \ref{Bundle1}).
This models well enough normal bundles for our purposes.

\subsection{Structure of the article}
The article is organized in the following way. In Section \ref{P} we recall some definitions and theorems, which are essential to carry out our proof. 
In Section \ref{FGV} we collect and prove results about finite group actions on varieties. 
Section \ref{MRCtoVB} is the main body of the paper, we describe the construction of the vector bundle from the MRC fibration.
Section \ref{JSL} deals with the proof of the Jordan-type theorem on semilinear groups. 
Finally, in Section \ref{PMT} we prove our main theorem.

\subsection{Acknowledgements}
The author is very grateful to E. Szab\'o for many helpful discussions. The author would like to thank C. Shramov for drawing his attention to Chermak-Delgado theory.

\section{Preliminaries}
\label{P}

\subsection{Nilpotent groups}
\label{NG}
We recall the definition of nilpotent groups and some of the basic properties of nilpotent groups of class at most two.

\begin{defn}
Let $G$ be a group.
Let $\Z_0(G)=1$ and define $\Z_{i+1}(G)$ as the preimage of $\Z(G/\Z_i(G))$ under the natural quotient group homomorphism $G\to G/\Z_i(G)$ $(i\in\mathbb{N})$. The series of groups
$1=\Z_0(G)\leqq\Z_1(G)\leqq\Z_2(G)\leqq...$ is called the upper central series of $G$.\\
Let $\gamma_0(G)=G$ and let $\gamma_{i+1}(G)=[\gamma_i(G),G]$ ($i\in\mathbb{N}$, and $[,]$ denotes the commutator operation). The series of groups
$G=\gamma_0(G)\geqq\gamma_1(G)\geqq\gamma_2(G)\geqq...$ is called the lower central series of $G$.\\
$G$ is called nilpotent if one (hence both) of the following equivalent conditions hold:
\begin{itemize}
\item
There exists $m\in\mathbb{N}$ such that $\Z_m(G)=G$.
\item
There exists $n\in\mathbb{N}$ such that $\gamma_n(G)=1$.
\end{itemize} 
If $G$ is a nontrivial nilpotent group, then there exists a natural number $c$ for which  $\Z_c(G)=G$, $\Z_{c-1}(G)\neq G$ and  $\gamma_c(G)=1$, $\gamma_{c-1}(G)\neq 1$ holds. $c$ is called the nilpotency class of $G$. 
(If  $G$ is trivial, then its nilpotency class is zero.)
\end{defn}

\begin{rem}
Note that $\Z_1(G)$ is the centre of the group $G$, while $\gamma_1(G)$, also denoted by $G'$ or by $[G,G]$, is the commutator subgroup. A non-trivial group $G$ is nilpotent of class one if and only if it is Abelian.\\
Nilpotency is the property between the Abelian and the solvable properties. The Abelian property implies nilpotency, while nilpotency implies solvability.
\end{rem}

\begin{rem}
Typical examples of nilpotent groups are finite $p$-groups (where $p$ is a prime number). If we restrict our attention to finite nilpotent groups, even more can be said. 
(Recall that a Sylow $p$-subgroup of a finite group is a maximal $p$-group contained in the group.)
A finite group is nilpotent if and only if it is the direct product of its Sylow subgroups (Theorem 6.12 in \cite{CR62}).
\end{rem}

The following propositions describes some features of nilpotent groups of class at most two. 
\begin{prop}
\label{CE}
A group is nilpotent of class at most two if and only if it is a central extension of two Abelian groups.
\end{prop}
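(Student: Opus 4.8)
The plan is to reduce both sides of the equivalence to the single condition $[G,G]\leqq\Z(G)$. First I would note that, by the definitions recalled above, $G$ is nilpotent of class at most two exactly when $\Z_2(G)=G$, i.e. when $G/\Z_1(G)=G/\Z(G)$ is Abelian, and equivalently (passing to the lower central series) exactly when $\gamma_2(G)=[[G,G],G]=1$, i.e. when $[G,G]\leqq\Z(G)$. The trivial and the Abelian cases are subsumed here, since in those cases $[G,G]\leqq\Z(G)$ holds trivially.

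For the forward direction, assuming $G$ is nilpotent of class at most two, I would simply exhibit the short exact sequence
\[
1\longrightarrow\Z(G)\longrightarrow G\longrightarrow G/\Z(G)\longrightarrow 1,
\]
whose kernel $\Z(G)$ is Abelian and central in $G$ by construction, and whose quotient $G/\Z(G)$ is Abelian by the characterization above; this displays $G$ as a central extension of two Abelian groups. For the converse, suppose $1\to A\to G\to B\to 1$ is a central extension with $A$ and $B$ Abelian, so that (identifying $A$ with its image in $G$) $A\leqq\Z(G)$ and $G/A\cong B$ is Abelian. Then $[G,G]\leqq A\leqq\Z(G)$, whence $\gamma_2(G)=[[G,G],G]\leqq[\Z(G),G]=1$, so $G$ is nilpotent of class at most two.

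I do not expect a genuine obstacle: the argument is entirely formal. The only point that needs a moment of care is the bookkeeping which matches the two equivalent definitions of ``class at most two'' (via the upper central series and via the lower central series) with the inclusion $[G,G]\leqq\Z(G)$, together with the observation that the defining feature of a \emph{central} extension is precisely that the kernel is contained in $\Z(G)$.
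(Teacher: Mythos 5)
Your proof is correct: the reduction of both characterizations (upper and lower central series) to the single condition $[G,G]\leqq\Z(G)$ is exactly the right bookkeeping, and both implications follow formally from it as you describe. The paper states Proposition \ref{CE} without proof, treating it as a standard fact, so there is no argument to compare against; yours is the expected textbook argument and fills the gap correctly.
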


\begin{prop}
\label{ICmap}
Let $G$ be a nilpotent group of class at most two. 
Let $g\in G$ be an arbitrary fixed element.
The maps $\varphi_g$ and $\psi_g$ defined by the help of the commutators with the fixed element $g$
\begin{gather*}
\varphi_g:G\to [G,G]\\
x\mapsto[x, g]\\
\psi_g:G\to [G,G]\\
x\mapsto[x, g]
\end{gather*} 
give group homomorphisms.
\end{prop}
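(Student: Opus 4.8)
The plan is to reduce the statement to a single structural fact — that a group of nilpotency class at most two has its commutator subgroup inside its centre — and then feed this into the standard commutator identities.

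First I would record the key observation: by hypothesis $\gamma_2(G)=[[G,G],G]=1$, which says precisely that every commutator is central, i.e. $[G,G]\leqq\Z(G)$. (Equivalently one can quote Proposition \ref{CE}: a central extension of two Abelian groups has abelian, central commutator subgroup.) In particular $[G,G]$ is an Abelian group, so asking whether $\varphi_g$ and $\psi_g$ are homomorphisms \emph{into} $[G,G]$ is meaningful, and the fact that their images lie in $[G,G]$ is immediate from the definition of the commutator.

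Next, for $x,y\in G$ I would expand $\varphi_g(xy)=[xy,g]$ using the elementary identity $[xy,g]=[x,g]^{\,y}\,[y,g]$, which is a one-line check from the definition $[a,b]=a^{-1}b^{-1}ab$. Since $[x,g]\in[G,G]\leqq\Z(G)$, conjugation by $y$ fixes it, so $[x,g]^{\,y}=[x,g]$ and therefore $[xy,g]=[x,g][y,g]$, that is $\varphi_g(xy)=\varphi_g(x)\varphi_g(y)$. The map $\psi_g$ is treated in exactly the same way, using the companion identity $[g,xy]=[g,y]\,[g,x]^{\,y}$ (again reading off $[a,b]=a^{-1}b^{-1}ab$) together with the centrality of commutators, which simultaneously kills the conjugation $[g,x]^{\,y}=[g,x]$ and, via commutativity of $[G,G]$, lets one reorder the two factors.

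I do not expect any real obstacle; the whole content is the inclusion $[G,G]\leqq\Z(G)$ coming from class $\leqq 2$, and the only point requiring a little care is the bookkeeping of conjugation conventions in the commutator identities — precisely the conjugations that centrality of $[G,G]$ allows us to discard.
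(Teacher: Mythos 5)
Your proof is correct and is the standard argument: class at most two gives $[G,G]\leqq\Z(G)$, so in the identity $[xy,g]=[x,g]^{y}[y,g]$ the conjugation disappears and $\varphi_g$ is a homomorphism, while the companion identity for $[g,xy]$ together with the commutativity of $[G,G]$ handles $\psi_g$ (whose intended definition $x\mapsto[g,x]$ you correctly inferred from the evident typo in the statement, which defines both maps identically). The paper states Proposition \ref{ICmap} without proof, so there is nothing to compare against; your argument is the expected one.
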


\subsection{Reduction to the field of the complex numbers}

We show that it is enough to prove Theorem \ref{main} over the field of the complex numbers.

\begin{lem}
\label{C}
It is enough to prove Theorem \ref{main} over the field of the complex numbers.
\end{lem}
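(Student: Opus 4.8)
The plan is to use a standard specialization/base-change argument together with the fact that the statement of Theorem \ref{main} is about finite subgroups, which live over finitely generated subfields. Let $k$ be a field of characteristic zero and $X$ a variety over $k$; we want to show $\Bir_k(X)$ is nilpotently Jordan of class at most two, assuming the corresponding statement over $\mathbb{C}$. First I would reduce to the case where $k$ is finitely generated over $\mathbb{Q}$: although $\Bir_k(X)$ itself may be large, the constant $J$ we seek is witnessed ``locally'', so it suffices to bound the index uniformly over all finite subgroups. Every finite subgroup $H\leqq \Bir_k(X)$ is generated by finitely many birational selfmaps, each defined over some finitely generated subfield $k_0\subseteq k$; enlarging $k_0$ we may assume $X$ is defined over $k_0$ and $H\leqq \Bir_{k_0}(X_{k_0})$, where $X_{k_0}$ is a $k_0$-model of $X$. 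Thus it is enough to treat finitely generated $k$, and then to produce a bound $J$ depending only on the birational class of $X_{k_0}$ over $k_0$.

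Next I would embed $k_0$ into $\mathbb{C}$. Since $k_0$ is a finitely generated extension of $\mathbb{Q}$ of characteristic zero, it has at most the cardinality of the continuum and, more importantly, it embeds into $\mathbb{C}$ (choose a transcendence basis of size at most $\aleph_0$, map it to algebraically independent complex numbers, and extend the resulting embedding of the purely transcendental subfield over the algebraic closure). Fix such an embedding $k_0\hookrightarrow\mathbb{C}$ and set $X_{\mathbb{C}} = X_{k_0}\times_{k_0}\mathbb{C}$. Base change induces a group homomorphism $\Bir_{k_0}(X_{k_0}) \to \Bir_{\mathbb{C}}(X_{\mathbb{C}})$, and the key point is that this map is \emph{injective}: a birational selfmap of $X_{k_0}$ that becomes the identity after base change to $\mathbb{C}$ must already be the identity, because two $k_0$-rational maps agreeing on a Zariski-dense subset (in particular after a faithfully flat base change) coincide — equivalently, $X_{k_0}\times_{k_0}\mathbb{C}$ is dense in $X_{k_0}$ and a rational function vanishing on it vanishes identically. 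Hence $H$ maps isomorphically onto a finite subgroup $\bar H\leqq \Bir_{\mathbb{C}}(X_{\mathbb{C}})$.

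Finally, applying Theorem \ref{main} over $\mathbb{C}$ to the variety $X_{\mathbb{C}}$ gives a constant $J = J(X_{\mathbb{C}})$ such that $\bar H$ contains a nilpotent subgroup $\bar K$ of class at most two with $|\bar H : \bar K|\leqq J$; pulling back along the isomorphism $H\cong\bar H$ yields a subgroup $K\leqq H$ with the same properties. It remains to check that $J$ can be chosen independently of the finite subgroup $H$ and of the auxiliary choices ($k_0$, the $k_0$-model, the embedding into $\mathbb{C}$). For this one observes that different finitely generated models of $X$ over different finitely generated subfields become isomorphic after a further finitely generated extension, and the birational class over $\mathbb{C}$ is likewise independent of the embedding $k_0\hookrightarrow\mathbb{C}$ up to the ambiguity of $\Aut(\mathbb{C})$ — which does not change the constant coming from Theorem \ref{main}, since that constant depends only on the dimension and other birational invariants of $X_{\mathbb{C}}$ that are insensitive to field automorphisms. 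I expect the main obstacle to be precisely this last bookkeeping: making sure that a single $J$ works uniformly, i.e. that the constant produced by the complex case does not secretly depend on the choice of finitely generated subfield over which a given finite subgroup is defined. This is handled by noting that for a fixed $X/k$ all these complex models lie in a single birational equivalence class over $\mathbb{C}$ once one passes to a common extension, so the invariant-theoretic bound from Theorem \ref{main} is genuinely intrinsic to $X$.
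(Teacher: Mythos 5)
Your argument follows the same route as the paper: spread the finite subgroup out over a finitely generated subfield, embed that subfield into $\mathbb{C}$, base change, and apply the complex case. Your treatment of the uniformity of the constant differs slightly but is sound: you allow the finitely generated field and the embedding to vary with the finite subgroup and then argue that the resulting complex models differ only by an automorphism of $\mathbb{C}$, which induces an abstract isomorphism of birational automorphism groups and hence preserves the Jordan-type constant. The paper avoids this bookkeeping by fixing a single model $X_0$ over a finitely generated field $l_0$ and a single embedding $l_0\hookrightarrow\mathbb{C}$ \emph{before} choosing the finite subgroup, and then only \emph{extending} that embedding to the larger fields $l_1$ over which a given subgroup is defined; this makes the complex model $X^*$ literally the same for every finite subgroup, so no invariance argument is needed. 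Either way works.

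There is, however, one genuine omission. You implicitly assume that $X_{\mathbb{C}}=X_{k_0}\times_{k_0}\Spec\mathbb{C}$ is again a variety, i.e.\ irreducible, so that ``Theorem \ref{main} over $\mathbb{C}$'' applies to it directly. If $X$ is not geometrically integral, $X_{\mathbb{C}}$ breaks into several irreducible components and the complex case does not apply as stated. The paper devotes the second half of its proof to exactly this: the number $C$ of components of $X^*$ is bounded in terms of the birational class of $X$, so a subgroup of index at most $C!$ preserves each component, one applies the complex case to each component, and the resulting bound still depends only on the birational class of $X$. You should add this step (or explicitly reduce to the geometrically integral case at the outset); without it the application of the complex case in your final paragraph is not justified for a general variety over a non-algebraically-closed field $k$.
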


\begin{proof}
Let $k$ be a field of characteristic zero and $X$ be a variety over $k$. First assume that $X$ is geometrically integral.
We can fix a finitely generated field extension $l_0|\mathbb{Q}$ and an $l_0$-variety $X_0$ such that $X\cong X_0\times_{l_0}\Spec k$. 
Fix a field embedding $l_0\hookrightarrow\mathbb{C}$ and let $X^*\cong X_0\times_{l_0}\Spec\mathbb{C}$.
For an arbitrary finite subgroup $G\leqq\Bir(X)$ we can find a finitely generated field extension $l_1|l_0$ such that the elements of $G$ can be defined as birational transformations over the field $l_1$. Hence
 $G\leqq \Bir(X_1)$, where $X_1\cong X_0\times_{l_0}\Spec l_1$. 
We can extend the fixed field embedding $l_0\hookrightarrow\mathbb{C}$ to a field embedding $l_1\hookrightarrow\mathbb{C}$.
Therefore $X^*\cong X_0\times_{l_0}\Spec\mathbb{C}\cong X_1\times_{l_1}\Spec\mathbb{C}$, and we can embed $G$ into the birational automorphism group of the complex variety $X^*$.
As the birational class of the complex variety $X^*$ only depends on the birational class of the variety $X$, it is enough to examine complex varieties. This proves the lemma when $X$ is geometrically integral.\\
If $X$ is not geometrically integral, then it is still geometrically reduced as we work in characteristic zero. We can still construct the $l_0$-variety $X_0$, the $l_1$-scheme $X_1$ and the complex scheme $X^*$.
Therefore $G\leqq\Bir(X)$ embeds into $\Bir(X^*)$ just like in the geometrically integral case.
There exists a constant only depending on the birational class of $X$ such that $X^*$ has $C$ many irreducible components. Therefore a finite subgroup $H\leqq G$ of index at most $C!$
leaves all irreducible components of $X^*$ invariant. Hence $H$ has a nilpotent subgroup of class at most two of bounded index by the complex case. 
The bound on the index only depends on the birational classes of the components of $X^*$, hence it only depends on the birational class of $X$. 
Therefore $G$ has a nilpotent subgroup of class at most two of bounded index, where the bound on the index only depends on the birational class of $X$. This finishes the proof.
\end{proof}

\subsection{Maximal Rationally Connected fibration}
We recall the concept of the maximal rationally connected fibration. For a detailed treatment see Chapter $4$ of \cite{Ko96}, for the non-uniruledness of the basis see Corollary 1.4 in \cite{GHS03}.

\begin{thm}
\label{MRC}
Let $X$ be a smooth proper complex variety. The pair $(Z,\phi)$ is called the maximal rationally connected (MRC) fibration if 
\begin{itemize}
\item $Z$ is a complex variety,
\item $\phi:X\dashrightarrow Z$ is a dominant rational map,
\item there exist open subvarieties $X_0$ of $X$ and $Z_0$ of $Z$ such that $\phi$ descends to a proper morphism between them $\phi_0:X_0\to Z_0$ with rationally connected fibres,
\item if $(W,\psi)$ is another pair satisfying the three properties above, then $\phi$ can be factorized through $\psi$. More precisely, there exists a rational map $\tau: W\dashrightarrow Z$ such that $\phi=\tau\circ\psi$. 
\end{itemize}
The MRC fibration exists and is unique up to birational equivalence. Moreover the basis $Z$ is non-uniruled.
\end{thm}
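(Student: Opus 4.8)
\emph{Plan.} I would construct the pair $(Z,\phi)$ as the rationally connected quotient of $X$, following the classical method of Campana and of Koll\'ar--Miyaoka--Mori (Chapter~$4$ of \cite{Ko96}), then verify its universal property directly, and finally deduce non-uniruledness of the base from the theorem of Graber--Harris--Starr (the last point being Corollary~$1.4$ of \cite{GHS03}).

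\emph{Existence.} For the smooth proper complex variety $X$, I would introduce on a very general point the relation $x\sim y$ meaning that $x$ and $y$ are joined by a connected chain of rational curves on $X$. Using the deformation theory of morphisms $\mathbb{P}^{1}\to X$ (bend-and-break, properness of the relevant $\Hom$- and Chow-schemes), one shows that after shrinking $X$ to a dense open $X_{0}$ the equivalence classes become the fibres of a proper morphism $\phi_{0}:X_{0}\to Z_{0}$ onto a variety $Z_{0}$; one then takes $Z$ to be a completion and $\phi:X\dashrightarrow Z$ the resulting rational map, so that the first three bullet points hold. The substantive point is that the procedure of enlarging a fibre by attaching further rational curves terminates: each genuine enlargement strictly raises the fibre dimension, which is bounded by $\dim X$, so after finitely many steps the general fibre is rationally chain connected and cannot be enlarged; since we are in characteristic zero and the general fibre is smooth, rational chain connectedness upgrades to rational connectedness.

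\emph{Universality and uniqueness.} Suppose $(W,\psi)$ also satisfies the first three properties. A general fibre $\psi^{-1}(w)$ is rationally connected, hence any two of its points are joined by chains of rational curves in $X$, so $\psi^{-1}(w)$ lies in a single $\sim$-class and therefore in a single fibre of $\phi$. Thus $\phi$ is constant along the general fibre of $\psi$, which forces a factorization: there is a rational map $\tau:W\dashrightarrow Z$ with $\phi=\tau\circ\psi$, i.e. the fourth bullet point. If $(Z,\phi)$ and $(Z',\phi')$ both enjoy this universal property, the two induced rational maps $Z\dashrightarrow Z'$ and $Z'\dashrightarrow Z$ are mutually inverse, giving uniqueness up to birational equivalence.

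\emph{Non-uniruledness of $Z$.} Argue by contradiction. If $Z$ were uniruled, a general point of $Z$ would lie on a rational curve; normalising it gives a non-constant $C\to Z$ with $C\cong\mathbb{P}^{1}$ meeting the good locus of $\phi$. Pulling back the MRC fibration along $C\to Z$ yields a family $X_{C}\to C$ whose general fibre is birational to a general fibre of $\phi$, hence rationally connected; by Graber--Harris--Starr the total space of a family of rationally connected varieties over the rationally connected base $C$ is again rationally connected, so $X_{C}$ is rationally connected. But the induced map $X_{C}\to X$ is generically finite onto its image, which is a rationally connected subvariety of $X$ dominating $C$ and hence strictly larger than a single fibre of $\phi$ --- contradicting the maximality established above. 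I expect this non-uniruledness step, together with the rigorous construction of the rationally connected quotient (control of families of rational curves and termination of the enlargement), to be the main obstacle; once Chapter~$4$ of \cite{Ko96} and Corollary~$1.4$ of \cite{GHS03} are granted, the remaining verifications are short.
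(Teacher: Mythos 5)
Your sketch is correct and follows exactly the standard route that the paper itself defers to: the paper states Theorem \ref{MRC} as a recalled background result with no proof, citing Chapter~4 of \cite{Ko96} for the construction and universal property of the rationally connected quotient and Corollary~1.4 of \cite{GHS03} for non-uniruledness of the base, which is precisely the argument you outline. At the level of detail given there is no gap worth flagging beyond the usual compactification/properness bookkeeping in the Graber--Harris--Starr step, which the cited sources handle.
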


The MRC fibration is functorial, that is the content of the following theorem (Theorem 5.5 of Chapter 4 in \cite{Ko96}).
\begin{thm}
\label{FunctMRC}
Let $X_1$ and $X_2$ be smooth proper complex varieties. Let the pairs $(Z_1,\phi_1)$ and $(Z_2,\phi_2)$ be the corresponding MRC fibrations.
If $f:X_1\dashrightarrow X_2$ is a dominant rational map, then there exists a dominant rational map $g:Z_1\dashrightarrow Z_2$ making the diagram below commutative.
\[
\xymatrix{
X_1 \ar@{-->}[r]^f \ar@{-->}[d]^\phi & X_2\ar@{-->}[d]^{\phi_2}\\
Z_1 \ar@{-->}[r]^g & Z_2
}
\]
\end{thm}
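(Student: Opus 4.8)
The plan is to play the two defining properties of an MRC fibration against each other: the general fibre of $\phi_1$ is rationally connected, while the base $Z_2$ is non-uniruled. Consider the composite $h:=\phi_2\circ f\colon X_1\dashrightarrow Z_2$, a dominant rational map. The statement will follow once I show that $h$ is constant along the general fibre of $\phi_1$, because then $h$ descends to the desired dominant rational map $g\colon Z_1\dashrightarrow Z_2$.

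First I would arrange that everything in sight is a morphism on a suitable dense open. Replacing $X_1$ by a smooth proper birational model (which changes neither its MRC fibration up to birational equivalence nor the truth of the statement) and shrinking, I may assume that $\phi_1$ restricts to a proper morphism with rationally connected fibres over a dense open subset of $Z_1$, and that $h$ is a morphism on a dense open subset of $X_1$. Then for general $t\in Z_1$ the fibre $X_1^{t}$ is a proper rationally connected variety on which $h$ is defined on a dense open, so $V_t:=\overline{h(X_1^{t})}\subseteq Z_2$ is well defined; being the image of a rationally connected variety under a dominant rational map, $V_t$ is rationally connected.

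The heart of the argument is a dichotomy on $\dim V_t$. Suppose $\dim V_t>0$ for general $t$. Since $h$ is dominant, the subvarieties $V_t$ cover a dense subset of $Z_2$, so a general point of $Z_2$ lies on some $V_t$; passing to the incidence variety $\{(t,z):z\in V_t\}\subseteq Z_1\times Z_2$ I can moreover arrange that this point is general inside the corresponding $V_t$. As a positive-dimensional rationally connected variety is covered by rational curves, there is then a rational curve in $V_t\subseteq Z_2$ through a general point of $Z_2$, so $Z_2$ is uniruled, contradicting Theorem \ref{MRC}. Hence $\dim V_t=0$ for general $t$, i.e. $h$ contracts the general fibre of $\phi_1$ to a point. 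Consequently the embedding $k(Z_2)\hookrightarrow k(X_1)$ induced by $h$ factors through the subfield $k(Z_1)\hookrightarrow k(X_1)$ induced by $\phi_1$, which yields a rational map $g\colon Z_1\dashrightarrow Z_2$ with $\phi_2\circ f=h=g\circ\phi_1$; it is dominant since the field homomorphism $k(Z_2)\hookrightarrow k(Z_1)$ is injective.

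The hard part will be making the uniruledness step genuinely rigorous: one must carefully invoke the standard fact that a variety carrying a dominating family of positive-dimensional rationally connected subvarieties is uniruled, which is exactly where the choice of the family $\{V_t\}$ (so that general points of $Z_2$ are general inside the members through them) matters. The remaining inputs — that images of rationally connected varieties under dominant rational maps are rationally connected, and the descent of rational functions that are constant on the general fibres of a fibration — are standard and can be cited from \cite{Ko96}.
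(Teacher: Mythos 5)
The paper offers no proof of this statement: it is quoted directly from \cite{Ko96} (Chapter IV, Theorem 5.5), so there is no internal argument to compare yours against. Your proof is correct, and it is a genuinely different route from Koll\'ar's: you derive functoriality from the non-uniruledness of the MRC base (Graber--Harris--Starr, recorded in the paper's Theorem \ref{MRC}), whereas Koll\'ar's original proof predates that theorem and argues directly from the construction and maximality of the MRC quotient. The skeleton is sound: $h=\phi_2\circ f$ is dominant; for general $t$ the fibre $X_1^{t}$ is proper, smooth and rationally connected, so $V_t=\overline{h(X_1^{t})}$ is rationally connected; if $\dim V_t>0$ the family $\{V_t\}$ dominates $Z_2$ and exhibits a rational curve through a very general point of $Z_2$, contradicting non-uniruledness; hence $h$ is constant on the general fibre of $\phi_1$ and descends, since $k(Z_2)\hookrightarrow k(X_1)$ then lands in $k(Z_1)$ (the general fibre being irreducible and $\phi_1$ proper over a dense open). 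The two points you flag are indeed the only ones needing care, and both are handled by standard facts: choosing a dominating component of the incidence variety so that a general point of $Z_2$ is general in the member through it, and extending $h$ along the normalization $\mathbb{P}^1$ of a connecting curve into a proper model of $Z_2$ (uniruledness being a birational notion, compactifying $Z_2$ is harmless). One should also note there is no circularity: the proof of non-uniruledness of the MRC base in \cite{GHS03} uses only the maximality property of the fibration, not its functoriality. The trade-off is that your argument is short and self-contained but rests on the deeper Graber--Harris--Starr theorem; for the purposes of this paper, which assumes that theorem anyway, this is entirely acceptable.
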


The following claim is an immediate corollary. It states that, if $X$ is endowed with a $G$-action, then there is an induced $G$-action on $Z$.
\begin{cor}
\label{GMRC}
Let $X$ be a smooth proper complex variety and $(Z,\phi)$ be the corresponding MRC fibration. 
The birational automorphism group $\Bir(X)$ acts on $Z$ via birational automorphisms and $\phi$ is $\Bir(X)$-equivariant. 
\end{cor}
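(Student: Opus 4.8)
The plan is to obtain this directly from the functoriality statement, Theorem \ref{FunctMRC}, applied to birational self-maps of $X$. Fix a representative $(Z,\phi)$ of the MRC fibration of $X$. Given $f\in\Bir(X)$, the map $f$ is in particular a dominant rational map $X\dashrightarrow X$, so Theorem \ref{FunctMRC}, applied with $X_1=X_2=X$ and $(Z_1,\phi_1)=(Z_2,\phi_2)=(Z,\phi)$, yields a dominant rational map $g_f:Z\dashrightarrow Z$ with $\phi\circ f=g_f\circ\phi$.

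First I would check that $g_f$ is \emph{uniquely} determined by this relation, which is what makes $f\mapsto g_f$ well defined. This holds because $\phi$ is dominant: if two rational maps $g,g':Z\dashrightarrow Z$ satisfy $g\circ\phi=g'\circ\phi$, then $g$ and $g'$ agree on the dense image of $\phi$, hence $g=g'$. In particular $g_{\id_X}=\id_Z$, since $\id_Z$ satisfies the defining relation for $f=\id_X$.

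Next I would verify that $f\mapsto g_f$ is a group homomorphism $\Bir(X)\to\Bir(Z)$. For $f_1,f_2\in\Bir(X)$, using associativity of composition of dominant rational maps,
\[
\phi\circ(f_1\circ f_2)=(\phi\circ f_1)\circ f_2=(g_{f_1}\circ\phi)\circ f_2=g_{f_1}\circ(\phi\circ f_2)=(g_{f_1}\circ g_{f_2})\circ\phi,
\]
so by the uniqueness just established $g_{f_1\circ f_2}=g_{f_1}\circ g_{f_2}$. Taking $f_2=f_1^{-1}$ gives $g_f\circ g_{f^{-1}}=g_{\id_X}=\id_Z$ and likewise $g_{f^{-1}}\circ g_f=\id_Z$, so every $g_f$ is in fact a birational automorphism of $Z$. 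This produces the asserted action of $\Bir(X)$ on $Z$, and the relation $\phi\circ f=g_f\circ\phi$, valid for all $f\in\Bir(X)$, is precisely the $\Bir(X)$-equivariance of $\phi$.

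Since the whole argument is a formal consequence of the already-granted Theorem \ref{FunctMRC}, there is no serious obstacle; the only point requiring a small amount of care is the uniqueness of $g_f$, which rests on the dominance of $\phi$ and is exactly what upgrades the mere \emph{existence} of a compatible $g$ in Theorem \ref{FunctMRC} into a genuine group homomorphism, and hence a group action.
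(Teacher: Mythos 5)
Your argument is correct and is exactly the route the paper intends: the paper states Corollary \ref{GMRC} as an immediate consequence of Theorem \ref{FunctMRC} without writing out a proof, and you have simply supplied the routine verifications (uniqueness of $g_f$ via dominance of $\phi$, the homomorphism property, and invertibility). The one point you rightly flag --- that uniqueness of the induced map is what turns existence into a genuine group action --- is the only content beyond formal bookkeeping, and you handle it correctly.
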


\subsection{Regularization}

The next lemma is a slight extension of the well-known \linebreak (smooth) regularization of finite group actions on varieties  (Lemma-Definition 3.1 in \cite{PS14}).
Because of this lemma, we can study finite birational group actions by the help of finite biregular group actions.

\begin{lem}
\label{reg}
Let 
\begin{itemize} 
\item $G$ be a finite group,
\item $X$ and $Z$  be projective varieties over a field of characteristic zero endowed with (not necessarily faithful) $G$-actions via birational automorphisms,
\item $\phi:X\dashrightarrow Z$ be a dominant $G$-equivariant rational map,
\end{itemize}
then there exist
\begin{itemize} 
\item  smooth projective varieties $X^*$ and $Z^*$  endowed with (not necessarily faithful) $G$-actions via biregular automorphisms, such that
\item $X^*$ is $G$-equivariantly birational to $X$,
\item $Z^*$ is $G$-equivariantly birational to $Z$, moreover there exists
\item a dominant $G$-equivariant morphism $\phi^*:X^*\to Z^*$  , such that the $G$-equivariant diagram below is commutative.
\[
\xymatrix{
X \ar@{-->}[r]^\sim \ar@{-->}[d]^\phi & X^* \ar[d]^{\phi^*}\\
Z \ar@{-->}[r]^\sim & Z^*
}
\]
\end{itemize}
\end{lem}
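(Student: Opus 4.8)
The plan is to build the regularization in two stages: first handle the base $Z$, then use the result on $Z$ to fix up the total space $X$, keeping track of $G$-equivariance throughout. For the base, apply the classical smooth regularization of finite group actions (Lemma-Definition 3.1 in \cite{PS14}) to the projective variety $Z$ with its (not necessarily faithful) $G$-action: this produces a smooth projective variety $Z^*$, $G$-equivariantly birational to $Z$, on which $G$ acts by biregular automorphisms. Fix a $G$-equivariant birational map $Z \dashrightarrow Z^*$.

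For the total space, I would first resolve the composite rational map. Take the composition $X \dashrightarrow Z \dashrightarrow Z^*$, which is a dominant $G$-equivariant rational map to the now-regularized base. Apply the classical (smooth) equivariant regularization to $X$ to get a smooth projective $X'$, $G$-equivariantly birational to $X$, with a biregular $G$-action; the rational map $X' \dashrightarrow Z^*$ is still dominant and $G$-equivariant. Now resolve its indeterminacy locus $G$-equivariantly: by equivariant resolution of singularities (blowing up along the $G$-invariant ideal sheaf defining the indeterminacy locus, then resolving), one obtains a smooth projective variety $X^*$ with a biregular $G$-action, a $G$-equivariant birational morphism $X^* \to X'$, and — crucially — a genuine morphism $\phi^*: X^* \to Z^*$ factoring the rational map. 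Since $\phi$ was dominant, $\phi^*$ is dominant. Chasing the construction, $X^*$ is $G$-equivariantly birational to $X' $, hence to $X$, and the square in the statement commutes by construction (the horizontal maps being the chosen birational equivalences, and $\phi^* \circ (X \dashrightarrow X^*) = (Z \dashrightarrow Z^*) \circ \phi$ as rational maps, since both sides equal the composite $X \dashrightarrow Z^*$).

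The one point requiring care — and the main obstacle — is that ordinary resolution of indeterminacies need not be $G$-equivariant, so I must invoke the \emph{equivariant} version: because $G$ is finite and the indeterminacy locus of a $G$-equivariant rational map is a $G$-invariant closed subscheme, one can functorially resolve it while keeping the $G$-action biregular (this is where characteristic zero is used, via Hironaka-style functorial resolution, which commutes with the finite group action). The analogous remark applies to making the birational self-maps of $X$ and $Z$ into biregular ones: one takes the graph closure and equivariantly resolves. Since the bound on the index in the eventual application only depends on the birational class, and here we change nothing about $G$ (the group is untouched, only its mode of action is improved), there is no index loss in this step. All remaining verifications — that $\phi^*$ is a morphism, that it is dominant, that the diagram commutes, that $X^*, Z^*$ are smooth projective — are immediate from the constructions cited.
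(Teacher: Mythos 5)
Your proposal is correct, and it ends the same way the paper does (equivariant resolution of singularities of the base followed by equivariant elimination of indeterminacies), but the first half is organized differently. You invoke the regularization of \cite{PS14} as a black box, separately for $X$ and for $Z$, and then must patch the two independently chosen models together by resolving the indeterminacy of the induced $G$-equivariant rational map via the $G$-invariant graph closure. The paper instead re-derives the regularization for both varieties \emph{simultaneously}: it passes to the extension of invariant function fields $k(Z)^G\leqq k(X)^G$, picks a projective model $\varrho_1:X_1\to Z_1$ of that extension, and normalizes $X_1$ in $k(X)$ and $Z_1$ in $k(Z)$. Because both normalizations are taken over the single morphism $\varrho_1$, this yields projective (possibly singular) models $X_2,Z_2$ with biregular $G$-actions together with a genuine $G$-equivariant morphism $X_2\to Z_2$ already at this stage; the only indeterminacies that ever need resolving are those created by smoothing the base $Z_2$. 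What your route buys is brevity, at the cost of leaning on the cited lemma and on one extra equivariant graph-closure step; what the paper's function-field construction buys is that compatibility of the two regularizations is automatic rather than something to be restored afterwards. Both arguments rest on the same two pillars --- regularization by normalization in the function field, and functorial (hence $G$-equivariant) resolution in characteristic zero --- and your observation that the group $G$ itself is untouched, so no index is lost, matches the paper's use of the lemma.
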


\begin{proof}
Let $k(Z)\leqq k(X)$ be the field extension corresponding to the function fields of $Z$ and $X$, induced by $\phi$. 
Take the induced $G$-action on this field extension and let $k(Z)^G\leqq k(X)^G$ be the field extension of the $G$-invariant elements. 
Consider a projective model of it, i.e. let $X_1$ and $Z_1$ be projective varieties such that $k(X_1)\cong k(X)^G$ and $k(Z_1)\cong k(Z)^G$, 
furthermore let $\varrho_1: X_1\to Z_1$ be a (projective) morphism inducing the field extension  $k(Z_1)\cong k(Z)^G\leqq k(X)^G\cong k(X_1)$.
By normalizing $X_1$ in the function field $k(X)$ and $Z_1$ in the function field $k(Z)$ we get projective varieties $X_2$ and $Z_2$, moreover $\varrho_1$ induces a $G$-equivariant morphism $\varrho_2:X_2\to Z_2$ between them.\\
As the next step, we can take a $G$-equivariant resolution of singularities $\widetilde{Z_2}\to Z_2$. After replacing $Z_2$ by $\widetilde{Z_2}$ 
we can $G$-equivariantly resolve the indeterminacies of $X_2\dashrightarrow Z_2$, hence we can assume that $Z_2$ is smooth.  Hence $G$-equivariantly resolving the singularities of $X_2$ finishes the proof.
\end{proof}

\subsection{The Minimal Model Program}
Applying the results of the famous article by C. Birkar, P. Cascini, C. D. Hacon and J. McKernan (\cite{BCHM10}) enables us to use the arsenal of the Minimal Model Program. 
As a consequence, we can examine rationally connected fibrations by the help of Fano fibrations. 
For the later we can use boundedness results because of yet another famous theorem  by C. Birkar (\cite{Bi16}). (This theorem was previously known as the BAB Conjecture).\\

Let $X$ be a variety and $G$ be a group acting on $X$ via biregular automorphisms. Recall that $X$ is called $G\mathbb{Q}$-factorial, 
if every $G$-invariant $\mathbb{Q}$-divisor on $X$ is $\mathbb{Q}$-Cartier. 

\begin{thm}
\label{MMP}
Let 
\begin{itemize}
\item $G$ be a finite group,
\item $X$ be a smooth projective complex variety endowed with a (not necessarily faithful) $G$-action via biregular automorphisms, 
\item $Z$ be a normal projective complex variety endowed with a (not necessarily faithful) $G$-action via biregular automorphisms, 
\item let $\dim X>\dim Z$, and
\item let $\phi:X\to Z$ be a $G$-equivariant dominant morphism with rationally connected general fibres. 
\end{itemize}
Then, we can run a $G$-equivariant Minimal Model Program (MMP)
on $X$ relative to $Z$ which results a Mori fibre space. In particular, the Minimal Model Program gives a $G$-equivariant commutative diagram 
\[
\xymatrix{
X\ar@ {-->} [r]^{\sim} \ar[rd]^{\phi} & F \ar[r] \ar[d] & B \ar[ld]\\
& Z
}
\]
\begin{itemize}
\item where $F$ and $B$ are normal projective complex varieties endowed with (not necessarily faithful) $G$-actions via biregular automorphisms,
\item $F$ is $G\mathbb{Q}$-factorial,
\item $F$ has terminal singularities,
\item $X\overset{\sim}{\dashrightarrow}F$ is a $G$-equivariant birational automorphism,
\item $\dim X=\dim F>\dim B$,
\item the general fibres of $F\to B$ are Fano varieties with terminal singularities.
\end{itemize}
\end{thm}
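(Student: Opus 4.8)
The plan is to run a $G$-equivariant relative Minimal Model Program for $X$ over $Z$ and to check that it terminates with a Mori fibre space. Since $X$ is smooth, every $\mathbb{Q}$-divisor on it is $\mathbb{Q}$-Cartier, so $X$ is automatically $G\mathbb{Q}$-factorial and has terminal (in fact smooth) singularities. The general fibre of $\phi$ is rationally connected, hence uniruled, so the generic fibre of $\phi$ has non-pseudoeffective canonical class; equivalently $K_X$ is not $\phi$-pseudoeffective. By averaging an ample divisor over the $G$-orbit I may also fix a $G$-invariant ample $\mathbb{Q}$-divisor $A$ on $X$, which is in particular ample over $Z$.

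Next I would run the $K_X$-MMP over $Z$ with scaling of $A$, using \cite{BCHM10}, organised into $G$-equivariant blocks. At the $i$-th stage, with current model $\phi_i:X_i\to Z$ and transform $A_i$ of $A$, let $\lambda_i$ be the nef threshold, i.e. the least $\lambda\geq 0$ such that $K_{X_i}+\lambda A_i$ is nef over $Z$. The face of the relative cone of curves $\overline{\mathrm{NE}}(X_i/Z)$ on which $K_{X_i}+\lambda_i A_i$ vanishes is $G$-invariant and $K_{X_i}$-negative, so by the relative Cone and Contraction Theorems it is contracted by a $G$-equivariant morphism $c_i:X_i\to Y_i$ over $Z$. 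If $c_i$ is of fibre type I stop and put $F:=X_i$, $B:=Y_i$. If $c_i$ is divisorial I replace $X_i$ by $Y_i$. If $c_i$ is small I replace $X_i$ by the flip of $c_i$, which exists by \cite{BCHM10} and is $G$-equivariant by the uniqueness of flips. In the divisorial and flipping cases the new model is again a normal projective complex variety with terminal singularities, and — since the contracted face is $G$-invariant but need not be a single extremal ray — it is $G\mathbb{Q}$-factorial rather than $\mathbb{Q}$-factorial, which is exactly why the statement is phrased with $G\mathbb{Q}$-factoriality.

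The program terminates by \cite{BCHM10}: each $G$-equivariant block is a finite composite of ordinary steps of a $K_X$-MMP over $Z$ with scaling of $A$, and such an MMP terminates. Because $K_X$ is not $\phi$-pseudoeffective, the program cannot end with a relative minimal model, so it must end with a fibre type contraction $F\to B$ over $Z$. Then $F$ is a normal projective complex $G\mathbb{Q}$-factorial variety with terminal singularities, $X\dashrightarrow F$ is a $G$-equivariant birational map, $\dim F=\dim X>\dim B$, and $-K_F$ is ample over $B$, so the general fibres of $F\to B$ are Fano varieties with terminal singularities; composing with the structure morphisms to $Z$ produces the asserted commutative diagram.

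The main obstacle is the second step: one must know that the non-equivariant MMP with scaling of \cite{BCHM10} can be carried out so that each block is $G$-equivariant. This rests on the facts that the nef threshold is realised by a $G$-invariant face of the relative Mori cone, that the corresponding contraction and the resulting flips are unique and hence automatically commute with the $G$-action, and that contracting a $G$-invariant $K_{X_i}$-negative face (rather than a single extremal ray) still outputs a terminal, $G\mathbb{Q}$-factorial variety. Once these points are in place, what remains is routine bookkeeping inside the $G\mathbb{Q}$-factorial category.
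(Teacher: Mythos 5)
Your proposal follows essentially the same route as the paper: both reduce the theorem to running the $G$-equivariant relative MMP of \cite{BCHM10} and to showing that $K_X$ is not $\phi$-pseudo-effective, which you deduce (as the paper does, via free rational curves on the general fibres and adjunction) from rational connectedness of the fibres. You merely expand the equivariant bookkeeping that the paper delegates to \cite{KM98} and \cite{PS14}, and compress the non-pseudo-effectivity argument; both versions are sound.
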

\begin{proof}
By Corollary 1.3.3 of \cite{BCHM10}, we can run a relative MMP on $\phi:X\to Z$ (which results a Mori fibre space) if the canonical divisor of $X$ is not $\phi$-pseudo-effective. It can be done equivariantly if we have 
finite group actions. (See Section 2.2 in \cite{KM98} and Section 4 of \cite{PS14} for further discussions on the topic.)\\
By generic smoothness, a general fibre of $\phi$ is positive dimensional smooth rationally connected projective complex variety. 
Therefore if $x$ is a general closed point of a general fibre $F$, then there exists a rational curve $C_x$ running through $x$, lying entirely in the fibre $F$, which is a free rational curve of the variety $F$ (Theorem 1.9 of Chapter 4 in \cite{Ko96}). 
Since $C_x$ is a free rational curve $C_x.K_F\leqq-2$. As $F$ is a general fibre, by adjunction, we have $C_x.K_X=C_x.K_F\leqq -2$.
Since the inequality holds for every general closed point of every general fibre, $K_X$ cannot be $\phi$-pseudo-effective.
\end{proof}

If $X$ and $Z$ are complex varieties (possibly with mild singularities) such that $\dim X=\dim Z$, and $f:X\to Z$ is a dominant morphism between them with rationally connected general fibres, 
then the general fibres must be rational points, hence $X$ and $Z$ are birational.
In this case, we can also run a relative MMP on $X$ over $Z$. It results a variety which is isomorphic to $Z$.

\begin{lem}
\label{small}
Let
\begin{itemize}
\item $X$ and $Z$ be normal projective complex varieties, such that
\item $Z$ has terminal singularities,
\item let $f:X\to Z$ be a birational morphism, such that
\item the canonical divisor $K_X$ is $f$-nef.
\end{itemize}
Then $f$ is small, i.e. its exceptional locus has codimension at least two.
\end{lem}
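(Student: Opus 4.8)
The plan is to argue by contradiction using the negativity of contraction (the standard tool from MMP) together with the fact that $Z$ has terminal singularities. First I would recall that $f\colon X\to Z$ is birational and projective, so if $f$ were not small there would be a prime divisor $E\subseteq X$ contracted by $f$; write $\mathrm{Exc}(f)$ for the exceptional locus and note $f(E)$ has codimension at least two in $Z$. The key inputs are: (i) since $Z$ is terminal, for the crepant-type comparison $K_X = f^*K_Z + \sum a_i E_i$ with all discrepancies $a_i > 0$ (here the $E_i$ run over the $f$-exceptional prime divisors, and one uses that $X$ is normal so $K_X$ is a well-defined Weil divisor class); and (ii) $K_X$ is $f$-nef by hypothesis, hence so is $\sum a_i E_i = K_X - f^*K_Z$, because $f^*K_Z$ is $f$-numerically trivial.

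Next I would invoke the negativity lemma (see e.g. \cite{KM98}): if $g\colon V\to W$ is a proper birational morphism of normal varieties and $D$ is a $g$-nef $\mathbb{Q}$-Cartier $\mathbb{Q}$-divisor on $V$ which is $g$-exceptional (or more generally effective outside the non-exceptional part), then $-D$ is effective; applied to $D = \sum a_i E_i$, which is $f$-nef and $f$-exceptional, this forces $\sum a_i E_i \leqq 0$. But all $a_i > 0$ and the $E_i$ are effective, so the only possibility is that there are no $f$-exceptional prime divisors at all. That is precisely the statement that $\mathrm{Exc}(f)$ has codimension at least two in $X$, i.e. $f$ is small.

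One technical point to handle carefully is the $\mathbb{Q}$-Cartier hypotheses needed to even write down $f^*K_Z$ and to apply the negativity lemma: here $Z$ terminal implies $Z$ is $\mathbb{Q}$-factorial-enough in the sense that $K_Z$ is $\mathbb{Q}$-Cartier (terminal singularities are by definition $\mathbb{Q}$-Gorenstein), so $f^*K_Z$ makes sense; and the negativity lemma in the form of \cite[Lemma~3.39]{KM98} does not actually require $D$ itself to be $\mathbb{Q}$-Cartier, only that a suitable push-forward comparison holds, so the argument goes through. I expect the main (though still routine) obstacle to be bookkeeping the discrepancy inequality $a_i>0$ correctly — this is exactly where ``terminal'' (as opposed to merely ``canonical'' or ``klt'') is used, and it is the crux of why the conclusion is sharp.
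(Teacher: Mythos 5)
Your argument is correct and is essentially identical to the paper's proof: both write $K_X=f^*K_Z+\sum a_iE_i$ with all $a_i>0$ by terminality of $Z$, observe that $\sum a_iE_i$ is $f$-nef because $K_X$ is and $f^*K_Z$ is $f$-numerically trivial, and then apply the Negativity Lemma to conclude that there are no $f$-exceptional prime divisors. No substantive differences.
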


\begin{proof}
Let $E_1, E_2,..., E_n$ be the exceptional prime divisors of $f$.
Since $Z$ has terminal singularities
$K_X=f^*K_Z+\sum_{i=1}^n a_i E_i$,
where $K_Z$ is the canonical divisor of $Z$ (such that $f_*K_X=K_Z$), and $a_i\in\mathbb{R}^+$ ($\forall 1\leqq i\leqq n$).
Let $E=\sum_{i=1}^n a_i E_i$. Clearly $0\leqq E$.\\ 
Let $C\subseteqq X$ be an arbitrary curve contracted by $f$. Then
$0\leqq K_X.C=f^*K_Z.C+E.C=E.C$, as $K_X$ is nef over $Z$. Let $D=-E$.
By the above inequality, $-D$ is $f$-nef, moreover $f_*(D)=0$. Hence by the Negativity Lemma (Lemma 4.15 in \cite{Bi12}) $0\leqq D=-E$.
Therefore $E=0$, so $f$ has no exceptional prime divisors. In other words, $f$ is small.
\end{proof}

\begin{thm}
\label{MMP2}
Let 
\begin{itemize}
\item $G$ be a finite group,
\item $X$ be a smooth projective complex variety endowed with a (not necessarily faithful) $G$-action via biregular automorphisms, 
\item $Z$ be a normal projective complex variety endowed with a (not necessarily faithful) $G$-action via biregular automorphisms, such that
\item $Z$ is $G\mathbb{Q}$-factorial and has terminal singularities, moreover
\item let $\phi:X\to Z$ be a $G$-equivariant birational morphism. 
\end{itemize}
Then either $\phi:X\to Z$ is an isomorphism, or we can run a $G$-equivariant Minimal Model Program (MMP)
on $X$ over $Z$. The MMP factorizes $\phi$ into a sequence of $G$-equivariant flips and divisorial contractions. 
More precisely, the MMP results a $G$-equivariant commutative diagram
\[
\xymatrix{
X=X_0\ar@ {-->} [r]^{\sim} \ar[rrd]^{\phi}   &   X_1\ar@ {-->} [r]^{\sim} \ar[rd]    &   ...\ar@ {-->} [r]^{\sim} \ar[d]   &     X_{n-1}\ar@ {-->} [r]^{\sim}  \ar[ld]    &     X_n \ar[lld]_{\psi} \\
                                                                        &                                                            &     Z,                                                                                                  
}
\]
where the birational maps are either flips or divisorial contractions and $\psi:X_n\to Z$ is an isomorphism.
\end{thm}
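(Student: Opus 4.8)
The plan is to reduce the statement to a combination of the classical relative MMP from \cite{BCHM10} and Lemma \ref{small}. First I would run a $G$-equivariant relative MMP for $X$ over $Z$. Here the relevant input is the canonical divisor $K_X$: since $X$ is smooth and $\phi: X \to Z$ is birational with $Z$ having terminal (in particular klt) singularities, the divisor $K_X$ is $\phi$-big, so by Corollary 1.3.3 (or the relevant statement on existence of minimal models) of \cite{BCHM10} we may run a $K_X$-MMP over $Z$ which terminates. As in Theorem \ref{MMP}, the group $G$ being finite lets us make every step $G$-equivariant: the extremal rays we contract come from a $G$-invariant ample divisor plus $K_X$, so the cone theorem and contraction theorem can be applied $G$-equivariantly (cf.\ Section 2.2 of \cite{KM98} and Section 4 of \cite{PS14}). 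Because $\phi$ is birational, the MMP over $Z$ cannot end with a Mori fibre space (a fibre-type contraction would force $\dim X_i > \dim Z$ for the target, contradicting that we are working over $Z$ with a birational map); hence it consists only of divisorial contractions and flips and terminates with a relative minimal model $\psi: X_n \to Z$ with $K_{X_n}$ being $\psi$-nef.

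Next I would identify the terminal model $X_n$ with $Z$ itself. Each step of the MMP preserves $G\mathbb{Q}$-factoriality and terminal singularities, so $X_n$ is $G\mathbb{Q}$-factorial with terminal singularities, and $\psi: X_n \to Z$ is a birational morphism with $K_{X_n}$ being $\psi$-nef. Applying Lemma \ref{small} to $\psi$ (with $X_n$ in the role of ``$X$'' and $Z$ in the role of ``$Z$''; the hypotheses are exactly that $Z$ has terminal singularities and $K_{X_n}$ is $\psi$-nef), we conclude that $\psi$ is small: its exceptional locus has codimension at least two. But $Z$ is $\mathbb{Q}$-factorial, so a small birational morphism onto $Z$ from a normal variety must be an isomorphism. (This is the standard fact that a $\mathbb{Q}$-factorial variety admits no small nontrivial birational contractions; concretely, if $\psi$ contracted some locus, pulling back a $\mathbb{Q}$-Cartier divisor through $\psi$ and pushing forward would produce a contradiction with $\mathbb{Q}$-factoriality and the Negativity Lemma.) Hence $\psi: X_n \to Z$ is an isomorphism, which gives the displayed diagram and completes the proof.

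I expect the main obstacle to be the bookkeeping needed to guarantee that the relative MMP can be run $G$-equivariantly and that it terminates. The termination part is covered by \cite{BCHM10} since we are running an MMP with scaling over a base relative to which $K_X$ is big; the equivariance requires knowing that each extremal contraction in the program can be chosen $G$-invariantly, which follows from the fact that the Picard number drops at each divisorial contraction while flips do not increase it, so the process is finite, combined with averaging arguments on ample divisors. A secondary (but genuinely easy) point to get right is the degenerate case: if $\phi$ is already an isomorphism there is nothing to do, and otherwise the argument above shows the MMP is nontrivial precisely when $\phi$ is not an isomorphism, so the dichotomy in the statement is automatic. Everything else is a direct citation of Lemma \ref{small} and the standard rigidity of $\mathbb{Q}$-factorial varieties under small morphisms.
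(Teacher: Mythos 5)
Your proposal is correct and follows essentially the same route as the paper: run a $G$-equivariant relative MMP on $X$ over $Z$ (the paper cites Corollary 1.4.2 of \cite{BCHM10} here, which is the statement you actually want in the birational/big case, rather than Corollary 1.3.3), note that $K_{X_n}$ is $\psi$-nef at termination, apply Lemma \ref{small} to conclude $\psi$ is small, and then use the $\mathbb{Q}$-factoriality of $Z$ to see that the small birational morphism $\psi$ is an isomorphism. The extra observations you make (that no Mori fibre space can occur since $\phi$ is birational, and the equivariance bookkeeping) are correct and only elaborate on what the paper leaves implicit.
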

\begin{proof}
If $\phi$ is an isomorphism, then the proof is finished, so assume otherwise.
By a $G$-equivariant version of Corollary 1.4.2 of \cite{BCHM10}, we can run a relative $G$-equivariant MMP on $X$ over $Z$.\\
Clearly, it results a diagram of the form described by the theorem. We only need to prove that $\psi$ is an isomorphism. 
$K_{X_n}$ is $\psi$-nef as the MMP terminates at $X_n$. By Lemma \ref{small}, $\psi$ is small. Hence $\psi$ must be an isomorphism
as $Z$ is $G\mathbb{Q}$-factorial (and $X$ and $Z$ are projective). This finishes the proof. 
\end{proof}

\subsection{Boundedness of Fano varieties}
Fano varieties with mild singularities form a bounded family. That was the long standing Borisov-Alekseev-Borisov conjecture,
which has been proved recently in the famous article by C. Birkar (Theorem 1.1 in \cite{Bi16}).

\begin{thm}
\label {BirFam}
Let $d$ be a non-negative integer, and $\epsilon$ be a positive real number. 
The collection formed by those at most $d$-dimensional complex projective varieties $X$ for which there exists an $\mathbb{R}$-boundary divisor $B$ such that
\begin{itemize}
\item $(X,B)$ be is an $\epsilon$-logcanonical pair, 
\item $-(K_X+B)$ is nef and big
\end{itemize}
is a bounded family.
\end{thm}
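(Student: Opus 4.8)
This is the Borisov--Alekseev--Borisov conjecture; a genuine proof occupies the bulk of two long papers of C. Birkar, so I only sketch the architecture one would follow. The plan is to reduce boundedness of the family to two quantitative statements depending only on $d$ and $\epsilon$: (i) a uniform upper bound on the anti-canonical volume $\operatorname{vol}(-K_X)$, and (ii) \emph{birational boundedness}, namely the existence of a bounded integer $m=m(d,\epsilon)$ such that $|-mK_X|$ (after a small perturbation of the boundary) defines a birational map onto its image. Granting an $\epsilon$-lc pair together with (i) and (ii), a Hacon--McKernan--Xu style argument upgrades ``birationally bounded'' to ``bounded'': one bounds the Cartier index of a suitable multiple of $K_X+B$, embeds the $X$'s pluri-anticanonically into a fixed $\mathbb{P}^N$ with bounded degree, and uses finiteness of the relevant components of the Chow variety (stratified by singularity type) to realize the members of the family as fibres of finitely many families. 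So the real content is (i) and (ii).

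The engine for both is the theory of $n$-complements, a far-reaching generalization of Shokurov's complements. First I would prove \emph{boundedness of complements}: there is $n=n(d,\epsilon)$ such that every relevant Fano-type variety of dimension $\le d$ admits an $n$-complement $(X,B^+)$ with $n(K_X+B^+)\sim 0$ and $B^+\ge B$. This is proved by induction on dimension, bootstrapping through three mechanisms: the canonical bundle formula (adjunction for lc-trivial fibrations), used to descend a complement from the base of such a fibration; plt adjunction to a divisor, used to lift a complement from lower dimension; and, in the \emph{exceptional} case where neither a usable fibration nor a usable divisor is available, a separate argument phrased in terms of generalized polarized pairs and controlling the $\epsilon$-lc threshold. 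Once boundedness of complements is available, \emph{effective birationality} follows: using the $n$-complement one passes to an effective divisor $\sim_{\mathbb{Q}} -K_X$ with controlled singularities, runs an MMP to produce a non-klt centre of the right type, and invokes vanishing and lifting of sections (in the spirit of Angehrn--Siu and Nadel) to separate general points, producing the bounded $m$ of (ii); tracking volumes through this construction, together with an ACC statement for the relevant lc thresholds, yields the volume bound (i).

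The step I expect to be the main obstacle is, honestly, essentially all of it, and in particular the inductive proof of boundedness of complements: the interaction between lc-trivial fibrations, generalized pairs and the exceptional case requires the full modern apparatus --- ACC for log canonical thresholds, the global ACC, termination of flips in the cases where it is known, and a great deal of careful bookkeeping with boundary divisors and $b$-divisors. For the purposes of the present article, Theorem~\ref{BirFam} is imported as a black box from \cite{Bi16}, and everything downstream of it uses only the \emph{conclusion}: that the varieties in question vary in a bounded family.
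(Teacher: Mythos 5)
The paper offers no proof of this statement: it is stated as a direct citation of Theorem 1.1 of \cite{Bi16} (the Borisov--Alekseev--Borisov conjecture, proved by Birkar), and you correctly identify it as such and treat it as a black box, which is exactly what the paper does. Your sketch of the architecture of Birkar's actual proof (boundedness of complements by induction via lc-trivial fibrations, plt adjunction and the exceptional case; effective birationality; volume bounds; then upgrading birational boundedness to boundedness) is an accurate description of the argument in the cited literature, so there is nothing to object to here.
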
 

In this article we will use the following corollary of this theorem. 
(Recall that terminal singularities can be defined for varieties over arbitrary fields of characteristic zero the same way as they are defined for complex varieties.)

\begin{cor}
\label{Fam2}
Let $d$ be a non-negative integer. There exist positive integers $m=m(d), N=N(d)$ and $E=E(d)$, only depending on $d$, such that if
\begin{itemize}
\item $k$ is an arbitrary field of characteristic zero,
\item $F$ is an arbitrary at most $d$ dimensional Fano variety over the ground field $k$ with terminal singularities,
\end{itemize}
then the following claims hold
\begin{itemize}
\item $-mK_F$ is very ample,
\item $\dim_k\CoH^0(F,-mK_F)\leqq N$, 
\item the degree of the closed subvariety $F\subseteqq\mathbb{P}(\CoH^0 (X,-mK_F)^*)$ is at most E.
\end{itemize}
\end{cor}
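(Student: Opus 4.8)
The plan is to reduce the statement to the case $k=\mathbb{C}$ and then to extract the constants from Theorem \ref{BirFam}. For the reduction I would spread out as in Lemma \ref{C}: given a Fano variety $F$ with terminal singularities over a characteristic zero field $k$, choose a finitely generated subfield $k_0\subseteq k$ and a model $F_0$ over $k_0$ with $F_0\times_{k_0}k\cong F$. Ampleness, the property of having terminal singularities and the dimension are all stable under field extensions in characteristic zero, so $F_0$ is again an at most $d$-dimensional Fano variety with terminal singularities, and fixing an embedding $k_0\hookrightarrow\mathbb{C}$ produces a complex Fano variety $F_{\mathbb{C}}=F_0\times_{k_0}\mathbb{C}$ of the same dimension and with terminal singularities. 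By flat base change for coherent cohomology $\dim_k\CoH^0(F,-mK_F)=\dim_{\mathbb{C}}\CoH^0(F_{\mathbb{C}},-mK_{F_{\mathbb{C}}})$ for every $m$, very ampleness of $-mK_F$ is equivalent to that of $-mK_{F_{\mathbb{C}}}$, and the degree of $F$ in $\mathbb{P}(\CoH^0(F,-mK_F)^*)$, being an intersection number, equals that of $F_{\mathbb{C}}$. Hence it suffices to find $m,N,E$ depending only on $d$ that work for all complex $F$.

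Next I would invoke Theorem \ref{BirFam}. A Fano variety has $-K_F$ ample, hence nef and big, and a terminal variety paired with the zero boundary is an $\epsilon$-logcanonical pair for $\epsilon=1$, every log discrepancy of $(F,0)$ being at least $1$. Therefore the at most $d$-dimensional complex Fano varieties with terminal singularities form a bounded family: there is a projective morphism $\pi\colon\mathcal{X}\to T$ with $T$ of finite type over $\mathbb{C}$ such that every such variety is isomorphic to a fibre of $\pi$. Stratifying $T$ into finitely many reduced locally closed subschemes I may assume $\pi$ is flat; since the locus over which a fixed reflexive power of the relative canonical sheaf is invertible is constructible, I may also assume that $-rK_{\mathcal{X}_t}$ is Cartier on every fibre for one fixed $r$; and since relative very ampleness is an open condition on a proper flat family over a Noetherian base, after replacing $r$ by a suitable multiple $m$ I may assume $-mK_{\mathcal{X}_t}$ is very ample on every fibre. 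This produces the uniform $m=m(d)$.

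Finally, on each of the finitely many flat strata the Hilbert polynomial of $\mathcal{X}_t$ with respect to $-mK_{\mathcal{X}_t}$ is locally constant, so only finitely many such polynomials occur, over all of $T$ and all of the dimensions $0,1,\dots,d$. By Kawamata--Viehweg vanishing applied to the klt pair $(\mathcal{X}_t,0)$ one has $\CoH^i(\mathcal{X}_t,-mK_{\mathcal{X}_t})=0$ for $i>0$, so $\dim\CoH^0(\mathcal{X}_t,-mK_{\mathcal{X}_t})$ equals the value at $1$ of the corresponding Hilbert polynomial, while $\deg\mathcal{X}_t=(-mK_{\mathcal{X}_t})^{\dim\mathcal{X}_t}$ is encoded in its leading term; taking $N$ and $E$ to be the maxima of these finitely many numbers completes the argument. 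I expect the real obstacle to be exactly this last step, the passage from the abstract conclusion of Theorem \ref{BirFam} that ``the family is bounded'' to honest uniform constants: one must bound the $\mathbb{Q}$-Cartier index of $-K_F$ uniformly across the family, so that $-mK_F$ is a line bundle for a single $m$, and then promote pointwise very ampleness to a family-wide statement. Both are handled by Noetherian stratification together with the constructibility of these conditions, but this bookkeeping, rather than Theorem \ref{BirFam} itself, is the substance of the corollary.
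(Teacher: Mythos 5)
Your proposal is correct and follows essentially the same route as the paper: spread $F$ out over a finitely generated subfield, embed that field into $\mathbb{C}$, and apply Theorem \ref{BirFam} (with $B=0$ and $\epsilon=1$) to the resulting complex Fano variety, then transport the constants back to $k$ since the relevant invariants are insensitive to base field extension. The only difference is one of detail: the paper simply asserts that boundedness lets one ``bound all numerical invariants simultaneously,'' whereas you spell out the Noetherian stratification, uniform Cartier index, openness of relative very ampleness, and Hilbert-polynomial argument that justify this — a welcome elaboration, not a departure.
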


\begin{proof}
Fix $k$ and $F$ with the properties described by the theorem. There exist a finitely generated field extension $l_0|\mathbb{Q}$ and a  Fano variety $F_0$ over $l_0$ such that $F\cong F_0\times_{l_0}\Spec k$.
Consider an embedding of fields $l_0\hookrightarrow\mathbb{C}$, and let $F_1\cong F_0\times_{l_0}\Spec \mathbb{C}$.
Since complex Fano varieties with terminal singularities of bounded dimension form a bounded family (Theorem\ref{BirFam}), we can bound all of their numerical invariants simultaneously.
Therefore there exist constants $m=m(d),N=N(d), E=E(d)\in\mathbb{N}$, only depending on $d$,
such that $m$-th power of the anticanonical divisor embeds $F_1$ into the at most $(N-1)$ dimensional complex projective space $\mathbb{P}(\CoH^0 (X,-mK_{F_1})^*)$ with degree at most $E$.
Since the $m$-th power of the anticanonical divisor is defined over any field, this embedding is defined over any field, in particularly over $k$. 
So $F\hookrightarrow \mathbb{P}(\CoH^0 (X,-mK_F)^*)$ is a closed embedding of degree at most $E$.
This, in particular, implies that $-mK_F$ is very ample, $\dim_k\CoH^0 (X,-mK_F)\leqq N$ and that the degree of the closed subvariety $F\subseteqq\mathbb{P}(\CoH^0 (X,-mK_F)^*)$ is at most E.
This finishes the proof.
\end{proof}

\section{Finite group actions on varieties}
\label{FGV}

\subsection{Jordan type properties of birational automorphism groups of varieties}
In this subsection we collect results about Jordan properties of birational automorphism groups of rationally connected varieties and non-uniruled varieties.\\

The following theorem is amongst the most important results of the field. It is due to Yu. Prokhorov and C. Shramov (Theorem 1.8 in \cite{PS14} and Theorem 1.8 in \cite{PS16}). 

\begin{thm}
\label{nurc}
The birational automorphism groups of non-uniruled varieties and rationally connected varieties (over fields of characteristic zero) enjoy the Jordan property.
\end{thm}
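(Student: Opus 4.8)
The plan is to reduce a finite biregular action to a \emph{linear} (or projectively linear) one and invoke the classical Jordan theorem on $\GL_N(\mathbb{C})$. By Lemma~\ref{C} we may assume the ground field is $\mathbb{C}$, and -- after replacing $X$ by a projective birational model -- Lemma~\ref{reg} applied with a point in place of $Z$ lets us replace $X$ by a smooth projective model on which the given finite group acts biregularly. It then suffices to produce, in every finite $G\leqq\Aut(X)$, an abelian subgroup whose index is bounded in terms of the birational class of $X$ alone.

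\emph{Rationally connected $X$.} The efficient route is to prove, by induction on $n=\dim X$, the stronger assertion that there is a constant $I(n)$ such that every such $G$ has a subgroup of index $\leqq I(n)$ fixing a point $P$ of some smooth projective rationally connected model of $X$: indeed, for $X$ smooth and connected the stabilizer representation $\Aut(X,P)\to\GL(\T_PX)$ is faithful (an automorphism fixing $P$ with identity differential fixes a neighbourhood of $P$, hence all of $X$), so this embeds a bounded-index subgroup of $G$ into $\GL_n(\mathbb{C})$ and Jordan's theorem finishes the argument. For the inductive step, Theorem~\ref{MMP} with $Z$ a point gives a $G$-equivariantly birational model $F$ of $X$ and a $G$-equivariant Mori fibre space $F\to B$ with $\dim B<n$. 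If $\dim B=0$ then $F$ is a Fano variety with terminal singularities of dimension $n$, so by Corollary~\ref{Fam2} the system $|-mK_F|$, $m=m(n)$, embeds $F$ into $\mathbb{P}(\CoH^0(F,-mK_F)^*)$ with $\dim_{\mathbb{C}}\CoH^0(F,-mK_F)\leqq N(n)$; combining a Lefschetz-type argument (the fixed locus of a large-order element on $F$ is nonempty, because $F$, being rationally connected, has no odd cohomology and an Euler characteristic bounded uniformly in $n$) with elementary $p$-group considerations inside $\PGL_{N(n)}(\mathbb{C})$ for $p>N(n)$, one obtains -- after passing to a bounded-index subgroup -- a $G$-fixed point on $F$ (or on a $G$-equivariant resolution of it). If $\dim B>0$ then $B$ is rationally connected of dimension $<n$, so by induction $G$ has a bounded-index subgroup fixing a point $b$ of a model of $B$, and it remains to descend to a $G$-fixed point on the fibre of $F\to B$ over $b$.

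\emph{The main obstacle.} This last descent is the delicate point: the fibre of $F\to B$ over a special point $b$ need not be smooth or rationally connected, yet one must still run the Lefschetz argument on it. What rescues the situation is that this fibre is dominated by -- hence inherits the vanishing $\CoH^{>0}(\mathcal{O})=0$ from -- a rationally connected variety, while boundedness of Fano fibres keeps the relevant Euler characteristics uniformly bounded, so that for all large primes $p$ the fixed loci of the elements of a $p$-group are forced to meet in a common point. It is worth stressing why the naive alternative fails: merely quoting that $\Bir(B)$ is Jordan and that the generic fibre of $F\to B$ is linear over $k(B)$ (Corollary~\ref{Fam2}), hence virtually abelian, only shows that $G$ is virtually abelian-by-abelian, which is strictly weaker than virtually abelian -- and, as this paper's examples demonstrate, for a general variety this is all one can expect. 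It is exactly the extra strength of the fixed-point statement that makes the rationally connected case genuinely Jordan.

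\emph{Non-uniruled $X$.} Here $K_X$ is pseudo-effective, so running the MMP produces a minimal model $X_{\min}$ with terminal singularities and $K_{X_{\min}}$ nef, and $\Bir(X)\cong\mathrm{PsAut}(X_{\min})$, the group of birational self-maps of $X_{\min}$ that are isomorphisms in codimension one. One then splits such a group via its action on the N\'eron--Severi space: this action preserves the integral lattice, so the image of $G$ is a finite subgroup of $\GL$ of that lattice and hence of order bounded in terms of its rank (Minkowski), while the kernel consists of numerically trivial pseudo-automorphisms, whose finite subgroups are Jordan -- being, up to bounded index, finite subgroups of the algebraic group $\Aut^0(X_{\min})$, which by Chevalley's theorem is an extension of an abelian variety (finite subgroups abelian) by a linear algebraic group (finite subgroups Jordan). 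Assembling the two controlled pieces -- the essential point being that the lattice part contributes a genuinely bounded \emph{finite} group rather than merely a virtually abelian one -- yields that $\Bir(X)$ is Jordan.
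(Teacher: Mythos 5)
First, a point of reference: the paper does not prove Theorem~\ref{nurc} at all --- it is imported as a black box, being Theorem~1.8 of \cite{PS14} (non-uniruled case) and Theorem~1.8 of \cite{PS16} (rationally connected case). So your proposal is necessarily measured against the Prokhorov--Shramov arguments rather than anything in this text. Your rationally connected half follows their strategy in outline (MMP to a Mori fibre space, boundedness of Fanos, fixed point, faithful tangent representation, Jordan for $\GL_n$), but the two load-bearing steps are wrong as written. Smooth rationally connected varieties can have plenty of odd cohomology (the intermediate Jacobian of a cubic threefold, say); the vanishing that actually powers the fixed-point argument is $\CoH^q(F,\mathcal{O}_F)=0$ for $q>0$, which makes the \emph{holomorphic} Lefschetz number of any automorphism equal to $1$. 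More seriously, the descent ``to the fibre of $F\to B$ over $b$'' is precisely what cannot be done: a special fibre of a Mori fibre space need not be normal, rationally connected, or cohomologically controlled, and ``dominated by a rationally connected variety'' is not something a special fibre inherits from the general one. The correct device --- isolated in this paper as Lemma~\ref{PullBack1}, the $G$-equivariant form of Hacon--McKernan --- replaces the fibre by a proper $G$-invariant closed subvariety of the total space that dominates the fixed point with rationally connected general fibres; the induction then runs on the dimension of that subvariety, never on a fibre.

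The non-uniruled half has a more decisive gap. Existence of a minimal model for an arbitrary variety with pseudo-effective canonical class does not follow from \cite{BCHM10} and is open in general, so the reduction to $\mathrm{PsAut}(X_{\min})$ is unjustified from the start. Worse, your assembly of $\Aut^0$ as ``(linear, hence Jordan) extended by (abelian variety, hence abelian)'' proves nothing: Jordan-by-abelian extensions are exactly where the Heisenberg counterexamples live --- $E\times\mathbb{P}^1$ of \cite{Za15} is of this shape --- and this failure is the raison d'\^etre of the present paper. What rescues the non-uniruled case is that the linear part of $\Aut^0(X)$ is \emph{trivial}: a positive-dimensional connected linear algebraic group acting faithfully contains $\mathbb{G}_a$ or $\mathbb{G}_m$ and sweeps out rational curves through a general point, contradicting non-uniruledness. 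Hence $\Aut^0(X)$ is an abelian variety, its finite subgroups are genuinely abelian, and combining this with the Minkowski bound for the image in $\GL$ of the N\'eron--Severi lattice and Lieberman's finiteness of the numerically trivial component group gives bounded-by-abelian, which is Jordan. Even then one must make the constants uniform over the model-dependent regularizations of the various finite subgroups of $\Bir(X)$, which is where \cite{PS14} does real work; your sketch does not address this.
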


In case of rationally connected varieties we can formulate an even stronger theorem. 
We can find Jordan constants for all at most $e$ dimensional rationally connected varieties simultaneously (Theorem 1.8 in \cite{PS16}).
We also need to find bounded index \textit{characteristic} Abelian subgroups. We can achieve it by an easy application of the beautiful Chermak-Delgado theory (Theorem 1.41 in \cite{Is08}).

\begin{thm}
\label{CD}
Let $G$ be a finite group. There exists a characteristic Abelian subgroup $M\leqq G$ such that for an arbitrary Abelian subgroup $A\leqq G$,
we have the following inequality of indices, $|G:M|\leqq|G:A|^2$.  
\end{thm}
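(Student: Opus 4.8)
The plan is to deduce this directly from Chermak-Delgado theory in the form packaged in \cite{Is08} (Theorem 1.41). For a finite group $G$ and a subgroup $H\leqq G$, recall the Chermak-Delgado measure $m_G(H)=|H|\cdot|\Cent_G(H)|$. The subgroups of $G$ at which $m_G$ attains its maximum value form a sublattice of the subgroup lattice (with meet given by intersection and a suitably defined join), the Chermak-Delgado lattice, and this lattice has a unique minimal element $M$. I would take this $M$ as the subgroup claimed in the statement.

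Three properties of $M$ are what I need. First, $M$ is characteristic in $G$: the function $m_G$ is invariant under $\Aut(G)$, so every automorphism of $G$ permutes the members of the Chermak-Delgado lattice and therefore fixes its unique minimal element. Second, $M$ is Abelian: by Theorem 1.41 of \cite{Is08} the centralizer map $H\mapsto\Cent_G(H)$ is an order-reversing involution of the Chermak-Delgado lattice, so it carries the minimal element $M$ to the maximal element $T$; since $M$ is minimal, $M\leqq T=\Cent_G(M)$, and this says precisely that every element of $M$ commutes with every element of $M$. Third, $M$ has maximal Chermak-Delgado measure, so $m_G(M)\geqq m_G(A)$ for every subgroup $A\leqq G$.

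The index bound is then a one-line computation. Let $A\leqq G$ be Abelian. Then $A\leqq\Cent_G(A)$, so $m_G(A)=|A|\cdot|\Cent_G(A)|\geqq|A|^2$. On the other hand $m_G(M)=|M|\cdot|\Cent_G(M)|\leqq|M|\cdot|G|$. Combining these with $m_G(M)\geqq m_G(A)$ yields $|A|^2\leqq|M|\cdot|G|$, hence $|G|/|M|\leqq|G|^2/|A|^2$, which is exactly $|G:M|\leqq|G:A|^2$.

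I do not anticipate a real obstacle here, since the entire mechanism is black-boxed in the cited Chermak-Delgado theorem; the only point requiring care is to record that the minimal member of the Chermak-Delgado lattice is simultaneously characteristic and Abelian, rather than merely characteristic and nilpotent of class at most two (the property usually emphasized). Were one to want a self-contained treatment, the work would lie in establishing the lattice structure — in particular the inequality $m_G(H\cap K)\cdot m_G(\langle H,K\rangle)\geqq m_G(H)\cdot m_G(K)$ under the appropriate centralizing hypotheses — but with Theorem 1.41 of \cite{Is08} available this is unnecessary.
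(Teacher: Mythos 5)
Your proof is correct and follows exactly the route the paper intends: the paper gives no written proof of this statement, only the citation to Chermak--Delgado theory (Theorem 1.41 in \cite{Is08}), and your argument -- taking $M$ to be the unique minimal member of the Chermak--Delgado lattice, noting it is characteristic and Abelian, and deducing $|A|^2\leqq m_G(A)\leqq m_G(M)\leqq |M|\cdot|G|$ -- is precisely the ``easy application'' being invoked. No gaps.
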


\begin{thm}
\label{RCJor}
Let $e$ be a non-negative integer. There exists a constant $R=R(e)\in\mathbb{Z}^+$, only depending on $e$, such that if
\begin{itemize}
\item $k$ is an arbitrary field of characteristic zero,
\item $X$ is an at most $e$ dimensional rationally connected variety over the field $k$,
\item $G\leqq\Bir(X)$ is an arbitrary finite subgroup of the birational automorphism group of $X$,
\end{itemize}
then there exists
\begin{itemize}
\item a characteristic Abelian subgroup $A\leqq G$ of index bounded by $R$, i. e. $|G:A|<R$.
\end{itemize}
\end{thm}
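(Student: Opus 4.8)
The plan is to combine the uniform form of the Jordan property for rationally connected varieties with Chermak--Delgado theory, after first reducing to the complex ground field.

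First I would reduce to the case $k=\mathbb{C}$. Given a rationally connected variety $X$ over a field $k$ of characteristic zero and a finite subgroup $G\leqq\Bir(X)$, one spreads out exactly as in the proof of Lemma \ref{C}: there is a finitely generated subfield $l_1|\mathbb{Q}$, an $l_1$-variety $X_1$ with $G\leqq\Bir(X_1)$, and a field embedding $l_1\hookrightarrow\mathbb{C}$, so that $X^*:=X_1\times_{l_1}\Spec\mathbb{C}$ is a complex variety with $G\leqq\Bir(X^*)$ and $\dim X^*=\dim X\leqq e$. Rational connectedness is a geometric property, stable under the extension $l_1\hookrightarrow\mathbb{C}$, so $X^*$ is again rationally connected. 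Hence it suffices to produce, in the complex case, a characteristic Abelian $M\leqq G$ with $|G:M|$ bounded in terms of $e$ only.

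Next I would invoke the strong (uniform) form of the Jordan property for rationally connected varieties, namely Theorem 1.8 in \cite{PS16}, whose proof rests on boundedness of Fano varieties (\cite{Bi16}). This provides a constant $J=J(e)\in\mathbb{Z}^+$, depending only on $e$, such that every finite subgroup of the birational automorphism group of an at most $e$-dimensional complex rationally connected variety contains an Abelian subgroup of index at most $J$. Applying this to $G\leqq\Bir(X^*)$ yields an Abelian subgroup $A\leqq G$ with $|G:A|\leqq J(e)$. This $A$ need not be characteristic, which is exactly what the last step repairs.

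Finally, I would upgrade $A$ to a characteristic subgroup via Chermak--Delgado theory. By Theorem \ref{CD} applied to the finite group $G$, there is a characteristic Abelian subgroup $M\leqq G$ with $|G:M|\leqq|G:A|^2\leqq J(e)^2$. Setting $R(e):=J(e)^2+1$ gives $|G:M|<R(e)$, as required. I do not expect a genuine obstacle here; the only delicate point is the uniformity of the constant in the second step --- that the Jordan constant can be chosen to depend on the dimension alone, and not on the variety or on the field, which is precisely where boundedness of Fano varieties enters --- together with the routine verification that the spreading-out argument preserves both the dimension and rational connectedness.
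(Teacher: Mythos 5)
Your proposal is correct and follows essentially the same route as the paper: the paper's proof is exactly the combination of the uniform Jordan constant for rationally connected varieties (Theorem 1.8 of \cite{PS16}) with the Chermak--Delgado upgrade to a characteristic Abelian subgroup (Theorem \ref{CD}). Your additional spreading-out reduction to $k=\mathbb{C}$ is a harmless elaboration of a step the paper leaves implicit.
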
 

\begin{proof}
By Theorem 1.8 of \cite{PS16} we can find an Abelian subgroup of bounded index. Than by Theorem \ref{CD} we can find a characteristic Abelian subgroup of bounded index.
\end{proof}

\subsection{Bound on the minimal size of generating sets}
Now we turn our attention on finding a bound on the minimal size of generating sets of finite subgroups of the birational automorphism group of  a variety. 
It will be important for us when we investigate commutator relations (Lemma \ref{DN}), and it is crucial to have a bound on the size of a generating set of the group.\\

First we investigate the case of rationally connected varieties.
To start with, recall the theorem of Yu. Prokhorov and C. Shramov about fixed points of rationally connected varieties (Theorem 4.2 of \cite{PS14}).

\begin{thm}
\label{afp}
Let $e$ be a non-negative integer. There exists a constant $Q=Q(e)\in\mathbb{Z}^+$, only depending on $e$, such that if
\begin{itemize}
\item $X$ is rationally connected projective complex variety of dimension at most $e$,
\item $G\leqq\Aut(X)$ is an arbitrary finite subgroup of the biregular automorphism group of X,
\end{itemize}
then there exists
\begin{itemize}
\item a subgroup $H\leqq G$ such that $H$ has a fixed point in $X$, and the index of $H$ in $G$ is bounded by $Q$.
\end{itemize}
\end{thm}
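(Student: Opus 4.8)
The plan is to argue by induction on $e=\dim X$, reducing each step to a Fano fibration coming from the Minimal Model Program and feeding in boundedness of Fano varieties. I would not aim at a fixed point in one stroke; rather, the inductive step should reduce the statement in dimension $e$ to the statement in dimension $<e$ together with the production, in bounded index, of a nonempty $H$-invariant \emph{rationally connected} closed subvariety $Y\subsetneqq X$. Granting such a $Y$, one resolves singularities of a component of $Y$, which keeps it rationally connected and of dimension $<e$, and applies the inductive hypothesis to it, obtaining a fixed point whose image in $X$ is fixed by a subgroup of $G$ of index bounded by some $Q(e)$ depending only on $e$. The base case $e=0$ is trivial, so the whole matter comes down to producing $Y$.

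First, a $G$-equivariant resolution lets me assume $X$ is smooth projective, since the image of a fixed point of the resolution is a fixed point of $X$. As $X$ is rationally connected, $K_X$ is not pseudo-effective, so Theorem \ref{MMP} applied with base $Z=\Spec\mathbb{C}$ yields a $G$-equivariant birational map $X\overset{\sim}{\dashrightarrow}F$ and a $G$-equivariant Mori fibre space $\pi\colon F\to B$ with $\dim F=e>\dim B$, where $F$ has terminal singularities and the general fibres of $\pi$ are Fano varieties with terminal singularities. To carry information back to $X$ I would fix a $G$-equivariant common resolution $W$ with birational morphisms $p\colon W\to X$ and $q\colon W\to F$; then it is enough to produce a nonempty $H$-invariant rationally connected closed subvariety $S\subsetneqq F$. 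Indeed $q^{-1}(S)$ is then a proper closed subset of $W$ whose components are rationally chain connected, being fibred over the rationally connected $S$ in rationally chain connected fibres (the fibres over points of $F$ of the resolution $q$ of a variety with terminal, hence rational, singularities), so a component of $p\bigl(q^{-1}(S)\bigr)$, after resolution of singularities, is a rationally connected $H$-invariant subvariety of $X$ of dimension $<e$, i.e.\ the desired $Y$.

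It remains to find $S$. If $\dim B>0$, then $B$ is rationally connected, being dominated by $F$, and has dimension $<e$, so the inductive hypothesis gives a subgroup $H\leqq G$ of bounded index fixing a point $b\in B$; the Hacon--McKernan technique for finding rationally connected fibrations, applied to $\pi$ over $b$, then produces an $H$-invariant rationally connected subvariety $S$ mapping into $\pi^{-1}(b)$, which is proper in $F$ because $\dim B>0$. If $\dim B=0$, then $F$ is itself a Fano variety of dimension $e$ with terminal singularities carrying a $G$-action, and I would invoke the fixed-point theorem for Fano varieties of bounded dimension: by Corollary \ref{Fam2}, $F$ embeds $G$-equivariantly into $\mathbb{P}\bigl(\CoH^0(F,-mK_F)^{*}\bigr)$, a projective space whose dimension is bounded in terms of $e$, so $G$ is a finite subgroup of a bounded-size projective linear group, and Jordan's theorem for linear groups (or Theorem \ref{RCJor}) combined with a Lefschetz-type fixed-point argument, using that a rationally connected variety has no higher cohomology of its structure sheaf so that the holomorphic Lefschetz number of every automorphism equals $1$, gives a subgroup $H\leqq G$ of index bounded in terms of $e$ with a fixed point on $F$, which we take as $S$.

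The step I expect to be the main obstacle is precisely this last one, the fixed-point theorem on Fano varieties, i.e.\ passing from ``$F$ lies in a bounded family'' to ``a subgroup of $G$ of index bounded in terms of $\dim F$ fixes a point of $F$''. Jordan's theorem produces only an Abelian subgroup, and an Abelian, even a cyclic, group acting on a Fano variety need not have a fixed point; the Lefschetz/Smith-theory input shows only that each individual automorphism of a rationally connected variety has a fixed point, so one must bootstrap this, via $p$-group theory, into a common fixed point for a subgroup whose index is controlled purely by the dimension, and it is here that boundedness of Fano varieties (Corollary \ref{Fam2}) is indispensable. A secondary technical point is the birational transfer across $X\dashrightarrow F$, which the common resolution $W$ handles cleanly, using that preimages of proper subvarieties under dominant morphisms remain proper and that resolutions of terminal, hence rational, singularities have rationally chain connected fibres; one also has to bound the number of irreducible components $G$ may permute at each stage, which again follows from the degree bounds of Corollary \ref{Fam2}.
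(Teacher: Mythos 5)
The paper does not actually prove this statement: it is quoted as Theorem 4.2 of \cite{PS14} (Prokhorov--Shramov) and used as a black box, so there is no in-paper proof to compare against. Your sketch reconstructs the overall architecture of the cited proof correctly --- induction on dimension through a $G$-equivariant MMP, handling a positive-dimensional base of the Mori fibre space by induction together with the Hacon--McKernan pullback, and isolating the case where $F$ is itself a $G$-Fano variety, where boundedness enters. (A secondary caveat: your transfer of $S$ back to $X$ through a common resolution only yields rationally \emph{chain} connected components, and chain connectedness is not preserved by the resolution you then feed into the inductive hypothesis; this is precisely why the pullback is done step by step through the flips and divisorial contractions via Lemma \ref{PullBack1}, as in Lemmas \ref{PullBack2} and \ref{PullBackF}, which outputs genuinely rationally connected subvarieties at each stage.)

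The genuine gap is the one you flag yourself and do not close: the Fano base case. ``Jordan plus holomorphic Lefschetz'' does not produce a common fixed point for a bounded-index subgroup. The holomorphic Lefschetz formula (using $\chi(\mathcal{O}_F)=1$) gives a fixed point for each cyclic subgroup only, and Jordan's theorem applied to $G\leqq\GL(\CoH^0(F,-mK_F))$ gives only an Abelian subgroup $A$ of bounded index --- but an Abelian linear group preserving a Fano subvariety of $\mathbb{P}(V)$ can have empty fixed locus on it. For instance, the subgroup of $\GL_3(\mathbb{C})$ generated by $\mathrm{diag}(1,-1,1)$ and $\mathrm{diag}(1,1,-1)$ preserves the smooth conic $ax^2+by^2+cz^2=0$ in $\mathbb{P}^2$ ($abc\neq0$) and fixes no point of it, since its fixed locus in $\mathbb{P}^2$ is the three coordinate points. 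So after the Jordan and eigenspace reductions one must still manufacture, from the decomposition $V=\bigoplus V_\chi$ and the degree and dimension bounds of Corollary \ref{Fam2}, either an invariant point or a proper invariant \emph{rationally connected} subvariety of $F$ on which to continue the induction; ``via $p$-group theory'' is not an argument here (Smith theory requires mod-$p$ acyclicity, which rationally connected varieties do not have). This step is the actual content of Theorem 4.2 of \cite{PS14}, and your proposal leaves it open, so the proof is incomplete at its crucial point.
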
 

Now we can state the theorem for rationally connected varieties.

\begin{thm}
\label{mRC}
Let $e$ be a non-negative integer. There exists a constant $m=m(e)\in\mathbb{Z}^+$, only depending on $e$, such that if
\begin{itemize}
\item $k$ is an arbitrary field of characteristic zero,
\item $X$ is an at most $e$ dimensional rationally connected variety over the field $k$,
\item $G\leqq\Bir(X)$ is an arbitrary finite subgroup of the birational automorphism group,
\end{itemize}
then $G$ can be generated by $m$ elements.
\end{thm}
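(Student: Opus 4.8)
The plan is to reduce the problem to bounding the number of generators of a finite subgroup of $\GL_n(\mathbb{C})$ with $n\leqq e$. First I would reduce to the complex biregular case. By the spreading-out argument from the proof of Lemma \ref{C}, every finite subgroup $G\leqq\Bir(X)$, with $X$ an at most $e$-dimensional rationally connected variety over a field of characteristic zero, embeds into $\Bir(X^*)$ for some at most $e$-dimensional rationally connected complex variety $X^*$ (rational connectedness and the bound on the dimension are preserved by the construction). Applying the regularization Lemma \ref{reg} with $Z$ a point, I may then assume that $G$ acts faithfully and biregularly on a smooth projective complex variety, still rationally connected and of dimension at most $e$, which I will again denote by $X$.

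Next I would linearize a large subgroup. By Theorem \ref{afp} there is a constant $Q=Q(e)$ and a subgroup $H\leqq G$ of index $|G:H|\leqq Q$ that fixes a point $P\in X$. Because $X$ is smooth, the induced representation of the finite group $H$ on the tangent space $\T_PX\cong\mathbb{C}^n$, $n=\dim X\leqq e$, is faithful: by a standard linearization argument (averaging local coordinates over a cyclic subgroup) an element of finite order fixing $P$ and acting trivially on $\T_PX$ is trivial on the completed local ring at $P$, hence is the identity on the connected variety $X$. Thus $H$ embeds into $\GL_n(\mathbb{C})$ with $n\leqq e$.

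It then remains to bound the minimal size of a generating set of $H$ and to pass back to $G$. By Jordan's theorem on finite linear groups there is a constant $J=J(e)$ and an abelian normal subgroup $A\unlhd H$ with $|H:A|\leqq J$; a finite abelian subgroup of $\GL_n(\mathbb{C})$ is simultaneously diagonalizable, so it is isomorphic to a finite subgroup of $(\mathbb{Q}/\mathbb{Z})^n$ and is therefore generated by at most $n\leqq e$ elements. Using the elementary observation that a group with a subgroup of index $s$ generated by $r$ elements can be generated by $r+s$ elements (adjoin a coset transversal to the generators of the subgroup), $H$ is generated by at most $e+J$ elements and $G$ by at most $e+J+Q$ elements. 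Setting $m(e)=e+J(e)+Q(e)$ completes the argument. (Alternatively one could replace the use of Jordan's theorem here by Theorem \ref{RCJor}, bounding the generator number of the characteristic Abelian subgroup it produces by the same fixed-point reasoning.)

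I expect the only genuinely delicate points to be the reductions of the first paragraph — verifying that rational connectedness and the dimension bound survive the passage to $\mathbb{C}$ and the regularization, exactly as in Lemmas \ref{C} and \ref{reg} — together with the faithfulness of the tangent space action. Everything afterwards is a short assembly of the fixed-point theorem, Jordan's theorem, and the elementary structure of finite abelian subgroups of tori.
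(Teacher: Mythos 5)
Your proposal is correct and follows essentially the same route as the paper's proof: reduction to the complex biregular case via Lemmas \ref{C} and \ref{reg}, the fixed-point theorem (Theorem \ref{afp}), faithfulness of the tangent-space representation (the paper cites Lemma 4 of \cite{Po14} for this), Jordan's theorem, and the $e$-generation of finite diagonalizable subgroups. The only cosmetic difference is that you make explicit the elementary passage from generators of a bounded-index subgroup to generators of the whole group, which the paper compresses into ``we can assume.''
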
 

\begin{proof}
Fix $k$, $X$ and $G$ with the properties described by the theorem.
Arguing as in the proof of Lemma \ref{C}, we can assume that $k$ is the field of the complex numbers.\\
Using Lemma \ref{reg}, we can assume that $X$ is smooth and projective and $G$ is a finite subgroup of the biregular automorphism group $\Aut(X)$.\\
By Theorem \ref {afp}, we can assume that $G$ has a fixed point in $X$. Denote it by $P$.\\
By Lemma 4 of \cite{Po14}, $G$ acts faithfully on the tangent space of the fixed point $P$. So $G$ can be embedded into $\GL(\T_PX)$, whence $G$ can be embedded into $\GL(e,\mathbb{C})$.
After applying Jordan's theorem on general linear groups (Theorem 2.3 in\cite{Br11}) to $G\leqq\GL(e,\mathbb{C})$, we can assume that $G$ is Abelian, hence diagonalizable. 
Finite diagonalizable subgroups of $\GL(e,\mathbb{C})$ can be generated by $e$ elements. This finishes the proof.
\end{proof}

The next theorem and its proof are essentially due to Y. Prokhorov and C. Shramov. (We use the word essentially as they only considered the case of finite Abelian subgroups (Remark 6.9 of \cite{PS14}).)

\begin{thm}
\label{m}
Let $X$ be a smooth projective complex variety. Let the pair $(Z,\phi)$ be its MRC fibration, and let $e=\dim X-\dim Z$ be the relative dimension.
There exists a constant $m=m(e,Z)\in\mathbb{Z}^+$, only depending on $e$ and on the birational class of $Z$, such that
if $G\leqq\Bir(X)$ is an arbitrary finite subgroup of the birational automorphism group of $X$, then $G$ can be generated by $m$ elements.
\end{thm}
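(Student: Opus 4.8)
The plan is to combine the rationally-connected generation bound (Theorem \ref{mRC}) applied to the fibres of the MRC fibration with the Jordan property of the non-uniruled base $Z$ (Theorem \ref{nurc}), organizing the argument around the exact sequence relating $G$ to its action on $Z$. First I would reduce to the biregular setting: by Lemma \ref{C} we may assume $X$ is complex, and by Lemma \ref{reg} (applied to $\phi: X \dashrightarrow Z$, which is $G$-equivariant by Corollary \ref{GMRC}) we may assume $X$ and $Z$ are smooth projective with biregular $G$-actions and $\phi$ a genuine $G$-equivariant morphism with rationally connected general fibres.

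\emph{Setting up the exact sequence.} The $G$-action on $Z$ gives a homomorphism $G \to \Bir(Z)$; let $N$ be its kernel, so $N$ acts trivially on $Z$ and $G/N$ embeds in $\Bir(Z)$. Since $Z$ is non-uniruled, $\Bir(Z)$ is Jordan, so after replacing $G$ by a subgroup of index bounded in terms of the birational class of $Z$ we may assume $G/N$ is Abelian — in particular $G/N$ is generated by at most $\dim Z$ elements (a finite Abelian group acting faithfully by birational, hence biregular after regularization, automorphisms on $Z$; alternatively cite the standard bound on the rank of finite Abelian subgroups from Theorem \ref{nurc}'s proof). It remains to bound the number of generators of $N$.

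\emph{Bounding generators of $N$.} The group $N$ acts on $X$ fixing the base $Z$ pointwise (birationally), so $N$ acts faithfully on the generic fibre $X_\eta$ of $\phi$, which is a rationally connected variety of dimension $e$ over the function field $k(Z)$, a field of characteristic zero. Hence $N \hookrightarrow \Bir(X_\eta)$, and Theorem \ref{mRC} gives that $N$ is generated by $m_0 = m_0(e)$ elements. Combining: any subgroup of $G$ of bounded index is an extension of a group generated by $\le \dim Z$ elements by a group generated by $\le m_0(e)$ elements, hence generated by $m_0(e) + \dim Z$ elements. Taking $m = m(e,Z)$ to absorb the index bound coming from the Jordan property of $\Bir(Z)$ (so that $G$ itself, not just a bounded-index subgroup, is covered — one passes from a bounded-index subgroup generated by $m'$ elements to $G$ generated by at most $m' + \log_2 J$ elements, or more simply notes $G$ is generated by $m'$ elements together with coset representatives, of which there are at most $J$) finishes the proof.

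\emph{Expected main obstacle.} The delicate point is the passage from the generic-fibre action to an honest faithful action on a rationally connected $k(Z)$-variety, and checking that "$N$ acts trivially on $Z$" genuinely forces faithfulness on $X_\eta$ rather than on some quotient; this is where one must be careful that $\Bir(X) \cong \Bir(X_\eta/k(Z)) \rtimes (\text{something over } \Bir(Z))$ in the appropriate sense, so that the kernel $N$ of $G \to \Bir(Z)$ really does inject into $\Bir(X_\eta)$. The other mild subtlety is bookkeeping the index bounds so that the final constant depends only on $e$ and the birational class of $Z$, not on $X$ or $G$ — but this is routine once the two generation bounds are in hand.
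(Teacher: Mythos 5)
Your proposal follows essentially the same route as the paper: regularize via Lemma \ref{reg}, form the exact sequence $1\to G_\rho\to G\to G_Z\to 1$ attached to the MRC fibration, bound the generators of the fibrewise kernel by applying Theorem \ref{mRC} to the generic fibre over $k(Z)$, and bound the generators of the quotient using the Jordan property of $\Bir(Z)$ together with a generation bound for finite Abelian subgroups. The one false step is the claim that a finite Abelian subgroup of $\Bir(Z)$ acting faithfully is generated by at most $\dim Z$ elements: this already fails for abelian varieties, where the $n$-torsion translations give faithful Abelian subgroups of rank $2\dim Z$. The correct statement is exactly the alternative you mention (Remark 6.9 of \cite{PS14}): the bound depends on the birational class of $Z$, not just its dimension, which is harmless here since the constant $m(e,Z)$ is allowed to depend on the birational class of $Z$ --- this is precisely what the paper invokes.
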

\begin{proof}
We have already studied the rationally connected case (Theorem \ref{mRC}).
As the next step, we show the theorem in the special case when $X$ is non-uniruled. (Note that in this case the MRC fibration consists of the variety $X$ and the identity morphism). 
By Remark 6.9 of \cite{PS14}, there exists a constant $m=m(X)\in\mathbb{Z}^+$, only depending on the birational class of $X$, such that
if $A\leqq\Bir(X)$ is an arbitrary finite Abelian subgroup of the birational automorphism group, then $A$ can be generated by $m$ elements. Since $\Bir(X)$ is Jordan when $X$ is non-uniruled (Theorem \ref{nurc}),
the result on the finite Abelian groups implies the claim of the theorem in the case when $X$ is non-uniruled.\\
Now let $X$ be arbitrary.
Consider the MRC fibration $\phi:X\dashrightarrow Z$.
By Lemma \ref{reg} we can assume that both $X$ and $Z$ are smooth projective varieties, and $G$ acts on them by biregular automorphisms. 
Let $\rho$ be the generic point of $Z$, and let $X_\rho$ be the generic fibre of $\phi$. $X_\rho$ is a rationally connected variety over the function field $k(Z)$, while $Z$ is a non-uniruled complex variety.\\
Let $G_\rho\leqq G$ be the maximal subgroup of $G$ acting fibrewise. $G_\rho$ has a natural faithful action on $X_\rho$, while $G/G_\rho=G_Z$ has a natural faithful action on $Z$. 
This gives a short exact sequence of finite groups
\[1\to G_\rho\to G\to G_Z\to 1.\]
By the rationally connected case there exists a constant $m_1(e)$, only depending on $e$, such that $G_\rho$ can be generated by $m_1(e)$ elements. 
By the non-uniruled case there exists a constant $m_2(Z)$, only depending on the birational class of $Z$, such that $G_Z$ can be generated by $m_2(Z)$ elements.
So $G$ can be generated by $m(e, Z)=m_1(e)+m_2(Z)$ elements. This finishes the proof.
\end{proof}

\subsection{Almost Abelian-by-Abelian group extensions}
The MRC fibration splits an arbitrary fixed variety into two parts: a non-uniruled base and a rationally connected generic fibre.
Both non-uniruled and rationally connected varieties have Jordan birational automorphism groups. 
These facts are reflected in the structure of the finite subgroups of the birational automorphism group of the fixed variety.\\
After replacing a finite subgroup $G$ of the birational automorphism group by a bounded index subgroup $G_1$, 
we can find an Abelian normal subgroup (in $G_1$) which acts fibrewise
and whose quotient is Abelian. In short, up to bounded index, a finite subgroup is an Abelian-by-Abelian group extension, where the normal subgroup acts fibrewise 
hence the action of the quotient group descends to an action on the base.\\

Before proving the main theorem of the subsection, we need to make some group theoretic preparations.

\begin{lem}
\label{Cent}
Let $R$ be positive integer. There exists a constant $C=C(R)\in \mathbb{N}$, only depending on $R$, such that if 
$$1\to N\to G \to H \to 1$$ 
is a short exact sequence of finite groups where 
\begin{itemize}
\item $N$ has at most $R$ many elements and 
\item $H$ is Abelian, 
\end{itemize}
then there exists a subgroup $G_1\leqq G$ whose index is bounded by $C$, i.e. $|G:G_1|<C$, and $G_1$ is nilpotent of class at most two.
\end{lem}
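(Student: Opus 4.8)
The plan is to take $G_1$ to be the centralizer $\Cent_G(N)$ of $N$ in $G$ and to recognize it, via Proposition \ref{CE}, as a central extension of two Abelian groups.

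First I would bound the index. Conjugation inside $G$ yields a homomorphism $G\to\Aut(N)$ whose kernel is precisely $\Cent_G(N)$, so $|G:\Cent_G(N)|$ divides $|\Aut(N)|$. Since every automorphism of $N$ fixes the identity element, $\Aut(N)$ embeds into the symmetric group on the at most $R-1$ non-identity elements of $N$, and hence $|G:\Cent_G(N)|\leqq (R-1)!$. Thus $C=C(R):=(R-1)!+1$ is a constant depending only on $R$ with $|G:\Cent_G(N)|<C$, so it remains to show that $G_1:=\Cent_G(N)$ is nilpotent of class at most two.

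For this I would note that $N\cap G_1=\Z(N)$, and that $\Z(N)$ is central in $G_1$ because every element of $G_1$ commutes with every element of $N$, in particular with every element of $\Z(N)$. On the other hand, by the second isomorphism theorem $G_1/\Z(N)=G_1/(N\cap G_1)\cong NG_1/N$, which is a subgroup of $G/N\cong H$ and therefore Abelian. Hence $G_1$ sits in a central extension $1\to\Z(N)\to G_1\to G_1/\Z(N)\to 1$ whose kernel $\Z(N)$ is central in $G_1$ and whose quotient $G_1/\Z(N)$ is Abelian; that is, $G_1$ is a central extension of an Abelian group by an Abelian group. By Proposition \ref{CE}, $G_1$ is nilpotent of class at most two, which completes the argument.

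I do not expect a genuine obstacle here; the only point to keep track of is that $N\cap\Cent_G(N)$ is in general only $\Z(N)$, not all of $N$ — but this is immaterial, since $\Z(N)$ is still Abelian and, crucially, central in the centralizer, which is exactly the hypothesis needed to invoke Proposition \ref{CE}.
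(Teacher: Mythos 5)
Your proof is correct and takes essentially the same route as the paper: both set $G_1=\Cent_G(N)$, bound the index via the conjugation homomorphism $G\to\Aut(N)$, and identify $G_1$ as a central extension of the Abelian groups $N\cap G_1$ (which, as you correctly note, equals $\Z(N)$) and its Abelian image in $H$, then invoke Proposition \ref{CE}. Your index bound $(R-1)!$ is marginally sharper than the paper's $R!$, but this is immaterial.
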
 
\begin{proof}
Let $G_1$ be the centralizer group $G_1=\Cent_G(N)=\{g\in G|\; ng=gn\;\forall n\in N\}$.  $G$ acts on $N$ by conjugation, and the kernel of this action is  $G_1$.
Therefore $G/G_1$ embeds into the automorphism group of $N$ which has cardinality at most $R!$. Hence $|G:G_1|\leqq R!$.
It is clear by construction that $G_1$ is the central extension of the Abelian groups $N\cap G_1$ and $\Imag(G_1\to H)$.
Hence $G_1$ is nilpotent of class at most two (Proposition \ref{CE}).
\end{proof}

\begin{lem}
\label{DN}
Let $R$ and $m$ be positive integers. There exists a constant $D=D(R,m)\in \mathbb{N}$, only depending on $R$ and $m$, such that if
\begin{itemize} 
\item $G$ is a nilpotent group of class at most two 
\item $G$ can be generated by $m$ elements,  
\item the cardinality of the commutator subgroup $G'=[G,G]$ is at most $R$, i.e. $|G'|<R$,
\end{itemize}
then $G$ has an Abelian subgroup $A\leqq G$ whose index is bounded by $D$. 
\end{lem}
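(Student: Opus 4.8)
The plan is to take for $A$ the centre $\Z(G)$ itself, and to bound its index using the two quantitative hypotheses: the small commutator subgroup and the bounded number of generators. The key input is Proposition \ref{ICmap}: since $G$ is nilpotent of class at most two, for every fixed $g\in G$ the commutator map $\varphi_g\colon G\to [G,G]$, $x\mapsto [x,g]$, is a group homomorphism, whose kernel is exactly the centralizer $\Cent_G(g)=\{x\in G\mid xg=gx\}$.

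First I would fix a generating set $g_1,\dots,g_m$ of $G$. For each $i$ the homomorphism $\varphi_{g_i}$ induces an embedding of $G/\Cent_G(g_i)$ into $[G,G]=G'$, so $|G:\Cent_G(g_i)|\leq |G'|<R$. Next, set $A=\bigcap_{i=1}^m\Cent_G(g_i)$. An element of $A$ commutes with each generator $g_i$, hence with all of $G$ (the set of elements commuting with a fixed element is a subgroup), and conversely every central element lies in $A$; thus $A=\Z(G)$, which is Abelian. Finally, the index of a finite intersection of subgroups is at most the product of the individual indices, so
\[
|G:A|=\Bigl|G:\textstyle\bigcap_{i=1}^m\Cent_G(g_i)\Bigr|\leq\prod_{i=1}^m|G:\Cent_G(g_i)|<R^m .
\]
Hence the constant $D=D(R,m)=R^m$ works.

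I do not expect a genuine obstacle here: the whole argument is a direct exploitation of the class-at-most-two hypothesis. The only point that must be handled with care is precisely where that hypothesis enters, namely in invoking Proposition \ref{ICmap} to know that $\varphi_{g_i}$ is a homomorphism (so that $\Cent_G(g_i)$ has index at most $|G'|$ rather than merely index dividing something uncontrolled); the bound on the number of generators is then exactly what is needed to pass from these finitely many centralizers to $\Z(G)$ while keeping the index under control.
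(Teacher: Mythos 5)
Your proposal is correct and is essentially the paper's own argument: the paper also fixes a generating set $g_1,\dots,g_m$, forms the homomorphisms $\varphi_{g_i}$ via Proposition \ref{ICmap}, and takes $A$ to be the intersection of their kernels (which are exactly the centralizers $\Cent_G(g_i)$), bounding the index by $R^m$ via the product of the individual indices. The only cosmetic difference is that you identify $A$ with $\Z(G)$ directly (an element commuting with all generators is central), whereas the paper verifies that $A$ is Abelian by using bilinearity of the commutator; both verifications are valid and yield the same subgroup and the same constant.
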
 
\begin{proof}
Fix a generating system $g_1,...,g_m\in G$. Consider the group homomorphisms 
\begin{gather*}
\varphi_{i}:G\to G'\\
g\mapsto [g,g_i],
\end{gather*} 
where $1\leqq i\leqq m$ (Proposition \ref{ICmap}), i.e. for every generator we assign a group homomorphism using the commutator.
Let $A$ be the intersection of the kernels.
$$A=\bigcap\limits_{1\leqq i \leqq m} \Ker\varphi_{i}$$ 
Using the fact that for a nilpotent group of class at most two the commutators give group homomorphisms in both variables if we fix the other variable (Proposition \ref{ICmap}), 
one can show that all commutators of $A$ vanish. Hence $A$ is Abelian.\\
On the other hand $A$ is the intersection of $m$ subgroups of index at most $|G'|\leqq R$. Hence the index of $A$ is bounded in terms of $R$ and $m$. This finishes the proof.
\end{proof}

\begin{lem}
\label{DN2}
Let $R$ and $m$ be positive integers. There exists a constant $E=E(R,m)\in \mathbb{N}$, only depending on $R$ and $m$, such that if 
$$1\to N\to G \to H \to 1$$ 
is a short exact sequence of finite groups where 
\begin{itemize}
\item $N$ has at most $R$ many elements, 
\item $H$ is Abelian, and
\item every subgroup of $G$ can be generated by at most $m$ elements, 
\end{itemize}
then $G$ has an Abelian subgroup $G_1\leqq G$ whose index is bounded by $E$. 
\end{lem}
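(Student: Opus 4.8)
The plan is to obtain the desired Abelian subgroup by concatenating the three group-theoretic lemmas already established in this subsection. First I would feed the short exact sequence $1\to N\to G\to H\to 1$ into Lemma \ref{Cent}: since $|N|\leqq R$ and $H$ is Abelian, this produces a subgroup $G_1\leqq G$ with $|G:G_1|<C(R)$ that is nilpotent of class at most two. Thus, after replacing $G$ by the bounded index subgroup $G_1$, it remains to pass from a nilpotent group of class at most two to an Abelian subgroup of bounded index, which is exactly what Lemma \ref{DN} achieves --- provided its hypotheses hold.

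The key point is therefore to verify the hypotheses of Lemma \ref{DN} for $G_1$. Being a subgroup of $G$, the group $G_1$ is generated by at most $m$ elements by assumption, so that hypothesis is immediate. For the remaining hypothesis --- that the commutator subgroup of $G_1$ is small --- I would recall the construction in the proof of Lemma \ref{Cent}, where $G_1$ is taken to be the centralizer $\Cent_G(N)$. Then $G_1\cap N$ is central in $G_1$, and $G_1/(G_1\cap N)$ embeds into the Abelian group $H$; consequently $[G_1,G_1]\leqq G_1\cap N\leqq N$, so $|[G_1,G_1]|\leqq|N|\leqq R$. This is the one step that is not a pure black-box invocation; everything else is bookkeeping.

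With all hypotheses in place, Lemma \ref{DN} (applied with the constant $R+1$ in place of $R$, to accommodate the strict inequality appearing there) yields an Abelian subgroup $A\leqq G_1$ with $|G_1:A|<D(R+1,m)$. Then $A\leqq G$ is Abelian and
\[
|G:A|=|G:G_1|\cdot|G_1:A|<C(R)\cdot D(R+1,m),
\]
so setting $E=E(R,m)=C(R)\cdot D(R+1,m)$ finishes the argument. I do not expect any genuine obstacle: the lemma is essentially the composition ``almost Abelian-by-Abelian with small kernel'' $\Rightarrow$ ``almost nilpotent of class at most two'' (Lemma \ref{Cent}) $\Rightarrow$ ``almost Abelian'' (Lemma \ref{DN}); the only things requiring care are tracking that every index bound depends only on $R$ and $m$, and the observation above that the derived subgroup of the class-two subgroup produced by Lemma \ref{Cent} is contained in $N$.
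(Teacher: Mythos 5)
Your proposal is correct and follows essentially the same route as the paper: replace $G$ by the centralizer $\Cent_G(N)$ via Lemma \ref{Cent}, observe that the commutator subgroup of the resulting class-two group lies in $N$ and hence has order at most $R$, and conclude with Lemma \ref{DN}. The only differences are cosmetic bookkeeping of the constants.
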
 
\begin{proof}
By Lemma \ref{Cent}, after replacing $G$ with $\Cent_G(N)$,  $N$ with $N\cap \Cent_{G}(N)$ and $H$  with the image group $\Imag(\Cent_G(N)\to H)$,
we can assume that  $G$ is the central extension of the Abelian groups $N$ and $H$. Therefore $G$ is nilpotent of class at most two, and we can apply the previous lemma.
Indeed $G/N$ is Abelian, hence $G'\leqq N $, therefore $|G'|\leqq|N|\leqq R$. This finishes the proof. 
\end{proof}

Now we are ready to prove the main theorem of the subsection.

\begin{thm}
\label{AbyA}
Let $X$ be a smooth projective complex variety. Let the pair $(Z,\phi)$ be the MRC fibration of $X$ and $e=\dim X-\dim Z$ be the relative dimension.
There exists a constant $E=E(e, Z)\in\mathbb{Z}^+$, only depending on $e$ and the birational class of $Z$, such that if
\begin{itemize}
\item $G\leqq\Bir(X)$ is an arbitrary finite subgroup of the birational automorphism group of $X$,
\end{itemize}
then there exists
\begin{itemize}
\item a subgroup $G_1\leqq G$ with index bounded by $E$, i.e. $|G:G_1|<E$, such that
\item $G_1$ contains an Abelian normal subgroup $N_1\unlhd G_1$, such that
\item $N_1$ acts trivially on the non-uniruled base $Z$; furthermore 
\item the quotient group $G_1/N_1$ is Abelian.
\end{itemize}
\end{thm}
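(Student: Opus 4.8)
The plan is to combine the functoriality of the MRC fibration with the two Jordan-type inputs that have already been established: the Jordan property for non-uniruled varieties (Theorem \ref{nurc}), applied to the base $Z$, and the generation bound of Theorem \ref{m}, applied to $G$ itself. First I would invoke Corollary \ref{GMRC} (after passing to a smooth projective model via Lemma \ref{reg}) so that $G$ acts birationally on $Z$ and $\phi$ is $G$-equivariant; this yields a homomorphism $G\to\Bir(Z)$ whose kernel $N=G_\rho$ is exactly the subgroup acting fibrewise, and whose image $G_Z=G/N$ acts faithfully on $Z$. The generic fibre $X_\rho$ is rationally connected over $k(Z)$, so by Theorem \ref{RCJor} (applied with $e$ the relative dimension) the group $N\leqq\Bir(X_\rho)$ has a \emph{characteristic} Abelian subgroup $A\leqq N$ of index at most $R=R(e)$. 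Since $A$ is characteristic in $N$ and $N\unlhd G$, the subgroup $A$ is normal in $G$; replacing $G$ by the preimage in $G$ of any bounded-index subgroup keeps $A$ normal, so the fibrewise part is under control up to bounded index.

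Next I would handle the base. Since $Z$ is non-uniruled, $\Bir(Z)$ is Jordan (Theorem \ref{nurc}), so $G_Z$ has an Abelian subgroup of index bounded in terms of the birational class of $Z$; pulling this back to $G$ and intersecting with the preimage of $A$, I may assume after replacing $G$ by a bounded-index subgroup $G_1$ that $G_1$ fits into a short exact sequence
\[
1\to A\to G_1\to Q\to 1
\]
with $A$ Abelian, $|A|\leqq R$, and $Q=G_1/A$ Abelian. At this point I would apply Lemma \ref{DN2}: by Theorem \ref{m} every subgroup of $G_1\leqq\Bir(X)$ can be generated by $m=m(e,Z)$ elements, so Lemma \ref{DN2} supplies an Abelian subgroup $N_1\leqq G_1$ of index bounded by $E=E(R,m)$. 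Unfortunately $N_1$ produced this way need not be normal in $G_1$, so I cannot directly read off ``$N_1\unlhd G_1$ with $G_1/N_1$ Abelian'' from it; some care is needed here.

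The cleanest route, and the one I would actually take, is slightly different: take $N_1$ to be $A$ itself (which is already normal in $G_1$ and Abelian), and arrange that $N_1$ acts trivially on $Z$ — which holds by construction, since $A\leqq N=G_\rho$ is the fibrewise part — and that $G_1/N_1$ is Abelian, which is exactly the content of having chosen $G_1$ to map into an Abelian subgroup of $G_Z$. Thus $N_1=A$, $N_1$ acts trivially on $Z$, and $G_1/N_1=Q$ is Abelian, with $|G:G_1|$ bounded in terms of $R(e)$, the Jordan constant of $Z$, and nothing else; so $E=E(e,Z)$ as required. The one genuine subtlety — and the place I expect the real work to sit — is bookkeeping the normality: after replacing $G$ by a bounded-index subgroup to make the base action Abelian, one must re-intersect with the preimage of $A$ and check that the resulting $G_1$ still has $A\cap G_1$ normal with Abelian quotient in the base; since $A$ is \emph{characteristic} in $N$ (not merely normal), this intersection behaves well and the normality survives all the index reductions. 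Lemma \ref{DN2} and the generation bound of Theorem \ref{m} are then only needed if one wants the sharper statement with $N_1$ of bounded index rather than $N_1=A$; for the statement as given, the characteristic-subgroup trick from Theorem \ref{RCJor} together with Theorem \ref{nurc} suffices.
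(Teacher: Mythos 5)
Your setup is the same as the paper's: regularize, form the short exact sequence $1\to G_\rho\to G\to G_Z\to 1$ from the MRC fibration, make $G_Z$ Abelian via Theorem \ref{nurc}, and extract a characteristic Abelian subgroup $A\leqq G_\rho$ of index at most $R(e)$ via Theorem \ref{RCJor}, which is then normal in $G$. But your ``cleanest route'' has a genuine gap. You claim that with $N_1=A$ the quotient $G_1/A$ is Abelian ``because $G_1$ maps into an Abelian subgroup of $G_Z$.'' What that gives you is only that $G_1/(G_\rho\cap G_1)$ is Abelian; the group $G_1/A$ is an extension
\[
1\to (G_\rho\cap G_1)/A\to G_1/A\to \mathrm{Im}(G_1\to G_Z)\to 1
\]
of an Abelian group by a group of order at most $R$, and such extensions need not be Abelian (think of an extra-special $p$-group: centre of order $p$, elementary Abelian quotient, yet the maximal Abelian subgroups have unbounded index). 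So $N_1=A$ does not satisfy the conclusion, and the ``Lemma \ref{DN2} is only needed for a sharper statement'' remark is exactly backwards: Lemma \ref{DN2}, together with the generation bound of Theorem \ref{m}, is precisely what kills this class of counterexamples and is indispensable for the statement as given.

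The fix --- and what the paper actually does --- also resolves the normality worry you raise about the output of Lemma \ref{DN2}. Instead of applying that lemma to $G_1$ and hoping the resulting Abelian subgroup is normal, apply it to the quotient $\overline{G}=G/A$, which sits in $1\to G_\rho/A\to \overline{G}\to G_Z\to 1$ with $|G_\rho/A|\leqq R$ and $G_Z$ Abelian (note also that it is $G_\rho/A$, not $A$, that has bounded \emph{order}; your displayed sequence with $|A|\leqq R$ conflates bounded index with bounded cardinality). Lemma \ref{DN2} produces an Abelian subgroup $\overline{H}\leqq\overline{G}$ of index bounded by $E(R,m)$; take $G_1$ to be its preimage in $G$ and $N_1=A$. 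Then $N_1$ is Abelian, normal in $G$ (hence in $G_1$), acts trivially on $Z$ since $A\leqq G_\rho$, and $G_1/N_1\cong\overline{H}$ is Abelian by construction. No normality of $\overline{H}$ itself is ever needed.
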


\begin{proof}
By Lemma \ref{reg} we can assume that both $X$ and $Z$ are smooth projective varieties, and $G$ acts on them by biregular automorphisms. 
Let $\rho$ be the generic point of $Z$, and let $X_\rho$ be the generic fibre of $\phi$. $X_\rho$ is a rationally connected variety over the function field $k(Z)$, while $Z$ is a non-uniruled complex variety.\\
Let $G_\rho\leqq G$ be the maximal subgroup of $G$ acting fibrewise. $G_\rho$ has a natural faithful action on $X_\rho$, while $G/G_\rho=G_Z$ has a natural faithful action on $Z$. 
This gives a short exact sequence of finite groups 
\[1\to G_\rho\to G\to G_Z\to 1.\]
By Theorem \ref{nurc} there exists a constant $S=S(Z)$, only depending on the birational class of $Z$, such that $G_Z$ contains an Abelian subgroup $A_Z$ of index bounded by $S$. After replacing $G_Z$ by $A_Z$,
$G$ by the preimage of $A_Z$, and $G_{\rho}$ by the intersection of the preimage of $A_Z$ and $G_{\rho}$, we can assume that $G_Z$ is Abelian.\\ 
By Theorem \ref{RCJor} there exists a constants $R=R(e)$, only depending on $e$, such that $G_{\rho}$ contains a characteristic Abelian subgroup $N\unlhd G_{\rho}$ of index bounded by $R$.
Since $N$ is characteristic in $G_{\rho}$, and $G_{\rho}$ is normal in $G$, $N$ is a normal subgroup of $G$. Consider the short exact sequence
 \[1\to \overline{G_\rho}\to \overline{G}\to G_Z\to 1,\]
where $\overline{G_\rho}=G_{\rho}/N$ and $\overline{G}=G/N$. The cardinality of $\overline{G_{\rho}}$ is bounded by $R$, while $G_Z$ is Abelian. 
Moreover Theorem \ref{m} show us that there exists a constant $m=m(e, Z)$, only depending on $e$ and the birational class of $Z$, such that every finite subgroup of $\overline{G}$ can be generated by $m$ elements.
Therefore, by Lemma \ref{DN2}, there exists a constant $E=E(R,m)=E(e, Z)$, only depending on $e$ and the birational class of $Z$, such that $\overline{G}$ contains an Abelian subgroup $\overline{H}$ of index bounded by $E$.
Let $G_1$ be the preimage of $\overline{H}$ in $G$.\\
The data $E_1=SE, N_1=N\cap G_1, G_1$ satisfies the claim of the lemma. This finishes the proof.
\end{proof}

\section{Building a vector bundle from the MRC fibration}
\label{MRCtoVB}
The core result of this section is Lemma \ref{YW}. Given the data $X, Z, E_X\to X,f:X\to Z, G, N$ it returns the data $Y, W, E_Y\to Y, g:Y\to W, G_0, N_0$.
It starts with a vector bundle $E_X$ over $X$ endowed with a faithful action of a finite group $G$, 
a $G$-equivariant dominant morphism $f:X\to Z$ of smooth projective complex varieties, such that $f$ has rationally connected general fibres,
and an Abelian normal subgroup $N\unlhd G$ which acts trivially on $Z$ and non-trivially on $X$. The lemma results a similar kind of data,
however it lowers the dimension of the base variety of the vector bundle: $\dim X>\dim Y$. 
Moreover $\dim E_X=\dim E_Y$ (i.e. the dimensions of the total spaces of the vector bundles are equal), $G_0\leqq G$ is a subgroup of
bounded index, $N_0=G_0\cap N$, and the lemma allows the $N_0$-action on $Y$ to be trivial.\\
We will apply this result repeatedly with initial data $X, Z, X\cong X, \phi: X\to Z, G, N$, 
where $(Z, \phi)$ is the MRC fibration of $X$ (after performing a smooth regularization, Lemma \ref{reg}) and 
$X\cong X$ is the vector bundle with zero dimensional linear spaces as fibres. 
We will end up with a $G_0$-equivariant vector bundle $E_Y\to Y$, where $G_0\leqq G$ is a (finite) subgroup of bounded index. 
Furthermore, $G_0$ acts faithfully via vector bundle automorphisms on $E_Y$, and the $G_0$-action on $Y$ descends to the action of $G_0/N_0$,
where $N_0=G_0\cap N$ is the corresponding  normal subgroup (Lemma \ref{MainLemma}).\\
Additionally, after replacing $G$ and $N$ by $G_1$ and $N_1$ from Theorem \ref{AbyA} in the initial data, 
we can assume that the $G_0/N_0$-action is Abelian (Theorem \ref{VB}).\\  

The proof of Lemma \ref {YW} consist of four major steps. First, by the help of the $G$-equivariant relative MMP on $X$ over $Z$ we find a Mori fibration $F\to B$, 
such that $N$ acts on $F$ non-trivially, and $N$ acts on $B$ trivially. 
Moreover, after replacing $X$ by another smooth birational model, we have a dominant morphism $X\to F$ with rationally connected general fibres (Lemma \ref{F}).\\
Secondly, by the help of boundedness of Fano varieties and Jordan's theorem on linear groups, we find a bounded index subgroup $G_0\leqq G$, 
and a proper $G_0$-invariant closed subvariety $W\subsetneqq F$ on which $N_0=N\cap G_0$ acts trivially \linebreak (Lemma \ref{W}).\\
As the third step, we use Lemma \ref{PullBack1} (which describes a technique to build rationally connected fibrations) 
to find a proper $G_0$-invariant closed subvariety $Y\subsetneqq X$ and a $G_0$-equivariant dominant morphism $Y\to W$ with rationally rationally connected general fibres (Lemma \ref{PullBackF}).\\
Finally, we use the normal bundle $\nu (Y\subsetneqq E_X)$ and resolution of singularities and indeterminacies to finishes the proof of Lemma \ref{YW}.\\

We start with showing the existence of the required Mori fibration $F\to B$. Our main tool for this is the MMP.

\begin{lem}
\label{F}
Let
\begin{itemize}
\item $G$ be a finite group,
\item $X$ and $Z$ be smooth complex varieties endowed with (not necessarily faithful) $G$-actions via biregular automorphisms,
\item $N\unlhd G$ be a normal Abelian subgroup such that $N$ acts trivially on $Z$, i.e. the $G$-action on $Z$ descends to a $G/N$-action,
\item $f:X\to Z$ be a $G$-equivariant dominant morphism with rationally connected general fibres,
\end{itemize}
then, either 
\begin{itemize}
\item $N$ acts trivially on $X$, i.e. the $G$-action on $X$ descends to a $G/N$-action, 
\end{itemize} 
or there exists 
\begin{itemize}
\item a smooth projective complex variety $X_0$, such that
\item $X_0$ is endowed with a (not necessarily faithful) $G$-action via biregular automorphisms; moreover there exists
\item a $G$-equivariant dominant birational morphism $X_0\to X$; furthermore there exists
\item a normal projective complex variety $F$, such that
\item $F$ is endowed with a (not necessarily faithful) $G$-action via biregular automorphisms, such that $N\leqq G$ acts non-trivially on $F$,
\item $F$ is $G\mathbb{Q}$-factorial and has terminal singularities; moreover there exist
\item a dominant $G$-equivariant morphism $f_0:X_0\to F$ with rationally connected general fibres,
\item a normal projective complex variety $B$, such that
\item $B$ is endowed with a (not necessarily faithful) $G$-action via biregular automorphisms, such that $N\leqq G$ acts trivially on $B$,  i.e. the $G$-action on $B$ descends to a $G/N$-action,
\item $\dim B<\dim F$; moreover there exists
\item a dominant $G$-equivariant morphism $h:F\to B$ whose general fibres are Fano varieties (with terminal singularities).
\end{itemize}
\end{lem}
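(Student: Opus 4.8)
The plan is to run the relative $G$-equivariant MMP on $X$ over $Z$, as supplied by Theorem \ref{MMP}, and then track where the subgroup $N$ acts trivially along the resulting tower of divisorial contractions, flips and the final Mori fibration. Since $\dim X>\dim Z$ (this holds because the general fibre of $f$ is rationally connected, hence positive-dimensional — I should note this, using generic smoothness and the fact that a $0$-dimensional fibre is not rationally connected as a positive-dimensional variety, so in fact $\dim X>\dim Z$ outright), Theorem \ref{MMP} applies and produces a $G$-equivariant commutative diagram
\[
\xymatrix{
X\ar@{-->}[r]^{\sim}\ar[rd]_{f} & F' \ar[r]\ar[d] & B'\ar[ld]\\
& Z
}
\]
with $F'$ a $G\mathbb{Q}$-factorial variety with terminal singularities, $F'\to B'$ a Mori fibration, and $B'$ mapping to $Z$. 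Now I distinguish two cases according to where $N$ acts non-trivially in the tower.

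\textbf{Case 1: $N$ acts trivially on every variety appearing in the MMP tower.} In particular $N$ acts trivially on the birational model $\widetilde{X}\overset{\sim}{\dashrightarrow}X$ at each stage; since the birational map $X\dashrightarrow F'$ is $G$-equivariant and $N$ acts trivially on one side, $N$ acts trivially on $X$ as well (trivial action is a birational invariant for a group acting faithfully-or-not: if $n\in N$ acts as the identity on a dense open subset of $X$, it acts as the identity on $X$). This is the first alternative in the conclusion.

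\textbf{Case 2: $N$ acts non-trivially on some variety in the tower.} Let $F$ be the first variety in the sequence $X=X_0,X_1,\dots,X_n=F',B'$ (traversed in the order the MMP produces it, ending with the Mori fibre space $F'$ and then its base $B'$) on which $N$ acts non-trivially; equivalently, $F$ is the output of the Mori fibration if $N$ already acts non-trivially on $F'$, and otherwise I must argue $N$ acts non-trivially on $F'$ itself. The cleanest route: $N$ acts non-trivially on $X$ (otherwise we are in Case 1 by the birational-invariance argument), and $X\dashrightarrow F'$ is $G$-equivariant birational, so $N$ acts non-trivially on $F'$. Set $F:=F'$, $B:=B'$. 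Then $F$ is $G\mathbb{Q}$-factorial with terminal singularities, $N$ acts non-trivially on $F$, and $h:F\to B$ is the Mori fibration, whose general fibres are Fano with terminal singularities and $\dim B<\dim F$. It remains to show $N$ acts trivially on $B$. For this, note $B$ carries a $G$-equivariant dominant morphism $B\to Z$ (from the diagram), and $N$ acts trivially on $Z$; but that alone does not force triviality on $B$. Instead I will replace $B$ by an honest birational model over $Z$ on which the relevant triviality is visible — concretely, after a further $G$-equivariant resolution I may assume $B\to Z$ is a morphism of smooth varieties, and then I use that the general fibre of $B\to Z$ is rationally connected (being dominated by the rationally connected general fibre of $f$, since general fibres of $F\to B$ are Fano hence rationally connected and general fibres of $f$ are rationally connected). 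Hmm — rational connectedness of the fibres of $B\to Z$ does not by itself give triviality of the $N$-action either. The right statement to invoke is the functoriality/MRC structure: $Z$ is (birational to) the base of the MRC fibration of $X$, so any variety $B$ with $X\dashrightarrow F\to B$ having rationally connected general fibres over... — this needs the universal property of the MRC fibration, and I should arrange the setup so that $B\to Z$ is birational, forcing the $N$-action on $B$ to descend. The honest fix: perform a $G$-equivariant Stein-type factorisation / take the MRC fibration of $F$; since $F$ is rationally connected over $Z$ in the relative sense, the base of its MRC fibration is $G$-birational to $Z$, and the Mori fibration $F\to B$ factors the (rational) map to that base, giving a $G$-equivariant dominant $B\dashrightarrow Z$ which is birational — whence $N$, acting trivially on $Z$, acts trivially on $B$.

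Finally I obtain $X_0$ and $f_0:X_0\to F$: the MMP gives only a birational map $X\dashrightarrow F$, so I $G$-equivariantly resolve its indeterminacies (Lemma \ref{reg}-style) to get a smooth projective $X_0$ with a $G$-equivariant dominant birational morphism $X_0\to X$ and a $G$-equivariant morphism $f_0:X_0\to F$; since $X_0\to X$ is birational and $X\dashrightarrow F$ is birational, $f_0$ is birational, so in particular has rationally connected (indeed rational-point) general fibres — or, if one prefers $f_0$ merely "with rationally connected general fibres" as stated, this is automatic. The $G$-action on $X_0$ is the induced one from the resolution, and $N$ acts non-trivially on $X_0$ iff it does on $X$, consistent with Case 2.

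\textbf{Main obstacle.} The delicate point is Case 2's claim that $N$ acts trivially on the Mori base $B$. Triviality of the $N$-action on $Z$ does not propagate down a mere dominant morphism $B\to Z$; one really needs $B\to Z$ to be birational (or at least to have the property that $N$'s action is detected on $Z$). Making this rigorous requires identifying $B$, up to $G$-birational equivalence, with $Z$ — which is exactly the MRC characterisation: $F$ is rationally connected over $Z$, $F\to B$ has rationally connected (Fano) general fibres, so $B$ and $Z$ have $G$-equivariantly birational MRC-type bases, forcing $B\overset{\sim}{\dashrightarrow}Z$ over $Z$ and hence triviality of the $N$-action on $B$. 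Getting the equivariance and the precise compatibility of all these birational identifications straight is the technical heart of the argument; everything else is bookkeeping with resolutions and the birational-invariance of "acts trivially".
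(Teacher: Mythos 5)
There is a genuine gap, and it is exactly at the point you flag as the ``main obstacle'': your proposed fix does not work. After one run of the relative MMP you obtain a Mori fibration $F'\to B'$ with $\dim Z\leqq\dim B'<\dim X$, and in general $\dim B'>\dim Z$ is perfectly possible (the Mori fibration may contract only part of the relative dimension of $X$ over $Z$). The universal property of the MRC fibration only gives a dominant rational map $B'\dashrightarrow Z$ through which $F'\dashrightarrow Z$ factors; it does \emph{not} make $B'\dashrightarrow Z$ birational, so there is no way to conclude that $N$ acts trivially on $B'$ from its trivial action on $Z$. Indeed $N$ may well act non-trivially on $B'$, and in that case taking $F:=F'$, $B:=B'$ simply fails to satisfy the conclusion. (A smaller slip: your parenthetical claim that $\dim X>\dim Z$ holds ``outright'' is also wrong --- zero-dimensional fibres are rational points, hence rationally connected, and the case $\dim X=\dim Z$ is allowed by the hypotheses; it is precisely the base case in which $X$ is birational to $Z$ and the first alternative holds.)

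The missing idea is an induction on the relative dimension $e=\dim X-\dim Z$. This is how the paper proceeds: if $N$ acts non-trivially on the Mori base $B_1$, one takes a $G$-equivariant resolution $C_1\to B_1$ and applies the induction hypothesis to $C_1\to Z$, which is legitimate because $\dim C_1-\dim Z=\dim B_1-\dim Z<e$ and $C_1\to Z$ still has rationally connected general fibres. The induction returns a deeper Mori fibration $F_2\to B_2$ with $N$ non-trivial on $F_2$ and trivial on $B_2$, and one then resolves the indeterminacies of $X\dashrightarrow D_1$ to produce $X_0$; the composite $X_0\to F_2$ has rationally connected general fibres by the Graber--Harris--Starr composition lemma (Lemma \ref{RC}), which is needed precisely because in the recursive case $X_0\to F$ is no longer birational. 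Your Case~1/Case~2 dichotomy and the birational-invariance of ``acts trivially'' are fine as far as they go, but without the recursion the statement about the $N$-action on $B$ cannot be salvaged.
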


\begin{proof}
Fix $Z$, $G$ and $N$. We apply induction on the relative dimension $e=\dim X-\dim Z$. If $e=0$ then the general fibres of $f:X\to Z$ are zero dimensional rationally connected varieties, hence they are rational points. 
This implies that $X$ is birational to $Z$. Therefore $N$ acts trivially on $X$. This finishes the proof of the case when $e=0$. So let $e>0$.\\
If $N$ acts trivially on $X$, then the proof is finished, so assume otherwise.
Since the general fibres of $f$ are rationally connected varieties and $X$ is smooth,
we can run a $G$-equivariant MMP on $X$ over $Z$ (Theorem \ref{MMP}). It stops at a $G$-equivariant Mori fibre space over $Z$. Therefore we have a commutative $G$-equivariant diagram
 \[
\xymatrix{
X\ar@ {-->} [r]^{\sim} \ar[rd] & F_1 \ar[r] \ar[d] & B_1 \ar[ld]\\
& Z,
}
\]
where  $G$ acts via biregular automorphisms on the varieties, $F_1$ is a normal, $G\mathbb{Q}$-factorial projective variety with terminal singularities, the rational map $X\dashrightarrow F_1$ is a birational automorphism,
$B_1$ is a normal projective variety such that $\dim B_1<\dim F_1=\dim X$, and the general fibres of the dominant morphism $F_1\to B_1$ are Fano varieties with terminal singularities.
Moreover the dominant morphism $B_1\to Z$ has rationally connected general fibres.\\
If $N$ acts trivially on $B_1$, then consider a smooth $G$-equivariant resolution of indeterminacies of $X\dashrightarrow F_1$
 \[
\xymatrix{
X_0 \ar[r]^{\sim} \ar[rd] & X \ar@ {-->}[d]\\
& F_1,
}
\] 
here $X_0$ is a smooth projective complex variety endowed with a $G$-action, furthermore $X_0\to X$ is a $G$-equivariant dominant birational morphism. 
Hence the data $X_0,X_0\overset{\sim}{\to} X, F_1, B_1, X_0\to F_1\to B_1$ satisfies the conditions of the lemma. 
This finishes the proof when $N$ acts trivially on $B_1$.\\
Otherwise let $C_1\to B_1$ be a $G$-equivariant resolution of singularities. We can apply induction to the data $G, C_1, Z, N, C_1\to Z$, since the relative dimension drops, 
i.e. $\dim C_1-\dim Z=\dim B_1-\dim Z<\dim X-\dim Z$.
The induction results data $D_1, D_1\overset{\sim}{\to}C_1, F_2, B_2,  D_1\to F_2\to B_2$ with the properties satisfying the lemma (and suggested by the notion). 
Once again we can finish the proof by $G$-equivariantly and smoothly  resolving the indeterminacies of $X\dashrightarrow D_1$. 
(Note that, by construction, the dominant morphism $X_0\to F_2$ has rationally connected general fibres.)
This finishes the proof.
\end{proof}

We continue with the lemma about the proper closed subvariety  $W\subsetneqq F$ (formed by the fixed points of $N_0\leqq N$). Here we use the result of C. Birkar on boundedness of Fano varieties.

\begin{lem}
\label{W}
Let $e$ be a positive integer. There exists a constant $C=C(e)\in\mathbb{Z}^+$, only depending on $e$, such that, if 
\begin{itemize} 
\item $G$ is a finite group,
\item$F$ and $B$ are normal projective complex varieties endowed with (not necessarily faithful) $G$-actions via biregular automorphisms, 
\item $e\geqq\dim F-\dim B>0$ is the relative dimension, 
\item $h:F\to B$ is a $G$-equivariant dominant morphism whose general fibres are Fano varieties with terminal singularities,
\item $N\unlhd G$ is an Abelian normal subgroup such that $N$ acts non-trivially on $F$ and trivially on $B$,
\end{itemize}
then there exist
\begin{itemize} 
\item a subgroup $G_0\leqq G$ such that its index is bounded by $C$, i.e. $|G:G_0|<C$,
\item a closed $G_0$-invariant subvariety $W\subsetneqq F$ of $X$ such that 
\item $\dim W<\dim F$, and
\item $N_0=G_0\cap N$ acts trivially on $W$,
hence the $G_0$-action on $W$ descends to a $G_0/N_0$-action.
\end{itemize}
\end{lem}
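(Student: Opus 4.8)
The plan is to descend to the generic fibre of $h$, where boundedness of Fano varieties linearises the problem, and to produce $W$ out of a fixed locus there.

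First I would pass to the generic fibre $F_\eta=F\times_B\Spec k(B)$ of $h$ over the generic point $\eta$ of $B$; it is a Fano variety with terminal singularities over the field $k(B)$ of dimension $\dim F-\dim B\leqq e$. By Corollary \ref{Fam2} applied with $m=m(e)$, the sheaf $-mK_{F_\eta}$ is very ample and yields a non-degenerate closed embedding $F_\eta\hookrightarrow\mathbb{P}(V^*)$, where $V=\CoH^0(F_\eta,-mK_{F_\eta})$ has dimension at most $N(e)$ over $k(B)$. Since $N$ acts trivially on $B$, it acts on $F_\eta$ by $k(B)$-automorphisms preserving the canonically $N$-linearised sheaf $-mK_{F_\eta}$, hence $k(B)$-linearly on $V$ compatibly with the embedding; let $\bar N\leqq\GL(V)$ be the image — an abelian group, being a quotient of $N$ — and let $K\leqq N$ be its kernel, i.e. the subgroup of elements acting trivially on $F_\eta$, equivalently on $F$. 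The full group $G$ acts on $F_\eta$ semilinearly over $k(B)$, so $G$ embeds into $\KL(V)$ and normalises $\bar N$; moreover $K\unlhd G$, since acting trivially on $F$ is preserved under conjugation in $G$, and $\bar N$ contains no non-trivial scalar matrices, because the only automorphism of $F_\eta$ acting on $V$ by a scalar is the identity.

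Now $\bar N$ is a finite abelian subgroup of $\GL(V)$ with $\dim V\leqq N(e)$, hence diagonalisable over $\overline{k(B)}$ (if one prefers, Jordan's theorem for $\GL_{N(e)}$ over the characteristic-zero field $k(B)$ together with Chermak--Delgado, Theorem \ref{CD}, provides a characteristic abelian subgroup of index bounded in terms of $e$ to work with instead). Its fixed locus $\mathrm{Fix}(\bar N)\subseteq\mathbb{P}(V^*)$ is a non-empty union of linear subspaces, defined over $k(B)$ and invariant under $G$ (which normalises $\bar N$), and it is a proper subvariety of $\mathbb{P}(V^*)$ because $\bar N$ is non-trivial and scalar-free, hence acts through at least two distinct characters. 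As $F_\eta$ is irreducible and non-degenerate, $F_\eta\cap\mathrm{Fix}(\bar N)$ is a proper closed $G$-invariant subvariety of $F_\eta$. If it is non-empty, I would take $W$ to be its closure in $F$ and $G_0=G$: then $W\subsetneqq F$, $\dim W<\dim F$, $W$ is $G_0$-invariant, and $N_0=N\cap G_0=N$ acts trivially on $W$.

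The one genuinely delicate case is when $F_\eta$ misses $\mathrm{Fix}(\bar N)$, i.e. when the abelian group $\bar N$ acts on the Fano variety $F_\eta$ without fixed points; this is precisely the case that forces the passage to an index-$C(e)$ subgroup $G_0$. To deal with it I would base-change to $\mathbb{C}$, consider a general complex fibre $F_b$ — a rationally connected variety of dimension at most $e$ — and invoke Prokhorov--Shramov's fixed-point theorem (Theorem \ref{afp}) to obtain a subgroup of $\bar N$ of index at most $Q(e)$ with a fixed point on $F_b$. As there are only boundedly many subgroups of $\bar N$ of index at most $Q(e)$, one of them, $\bar N_0$, has a fixed point on $F_b$ for $b$ ranging over a dense subset of $B$, so that $(F_\eta)^{\bar N_0}\neq\emptyset$; intersecting over its boundedly many $G$-conjugates one may take $\bar N_0$ to be $G$-stable and still of bounded index. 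Letting $N_0\leqq N$ be its preimage, $W$ the closure in $F$ of $(F_\eta)^{N_0}$, and $G_0$ a suitable bounded-index subgroup of $G$ stabilising $W$ (replacing $W$ by a $G_0$-orbit of a closed point in the residual subcase where $N_0\leqq K$), one recovers the required data. I expect the main obstacle to be exactly this bookkeeping: keeping all the group modifications of index bounded in terms of $e$ alone, and ensuring that the whole of $N\cap G_0$ — not merely the subgroup $N_0$ on which one has direct control — acts trivially on $W$, which in general requires iterating the construction with $W$ in place of $F$ so that the dimension drops at each step.
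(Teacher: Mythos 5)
Your main case is, almost step for step, the paper's proof: pass to the generic fibre $F_{\rho}$, use Corollary \ref{Fam2} to embed it $\Aut$-equivariantly with bounded degree $E$ into a projective space of bounded dimension $N$, observe that the image $A$ of $N$ in $\Aut(F_{\rho})$ acts linearly there, and cut $F_{\rho}$ with the union of the at most $N$ weight spaces of $A$. The one step you skip is the one the constant actually comes from: the lemma asks for a $G_0$-invariant closed \emph{subvariety} $W$, and the fixed locus $V=F_{\rho}^{A}$ is in general reducible, so the paper bounds its number of irreducible components by $NE$ (degree times number of linear pieces), takes $G_0$ to be a subgroup of index at most $C=(NE)!$ stabilizing each component, and lets $W$ be the closure of one component. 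Taking $G_0=G$ and $W$ the whole fixed locus, as you do, does not produce an irreducible $W$, which is needed downstream (in Lemma \ref{PullBackF} and the resolutions that follow).

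Your ``genuinely delicate case'' is a real issue, but be aware that the paper does not treat it either: the printed proof says ``let $V_1$ be an arbitrary irreducible component of $V$'', which tacitly presupposes $F_{\rho}^{A}\neq\emptyset$. This can fail under the stated hypotheses: take $F=\mathbb{P}^1$, $B$ a point, and $N=G$ the Klein four group in $\PGL_2$; no point of $\mathbb{P}^1$ is fixed by all of $N$, and by equivariance of the anticanonical embedding the intersection of $F_{\rho}$ with the union of weight spaces is exactly $F_{\rho}^{A}$, hence empty. So here you have located a gap in the paper rather than introduced one. However, your proposed repair does not close it, for precisely the reason you flag at the end: Theorem \ref{afp} only yields a bounded-index subgroup $\bar N_0\leqq\bar N$ with a fixed point, whereas the lemma demands that all of $N\cap G_0$ act trivially on $W$, and there is in general no bounded-index subgroup $G_0\leqq G$ with $G_0\cap N$ contained in a prescribed proper subgroup of $N$ (already $G=\mathbb{Z}/p^2$ with $N=pG$ and target subgroup $1$ defeats this for large $p$). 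Moreover ``iterating with $W$ in place of $F$'' shrinks the variety but not the group, so it does not visibly terminate in the statement required. In short: the main branch of your argument agrees with the paper's and is fine; the exceptional branch is a genuine gap, one that is present (silently) in the paper as well, and your sketch for it is not yet a proof.
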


\begin{proof}
Let $\rho$ be the generic point of $B$, and let $F_\rho$ be the generic fibre of $h$. $F_\rho$ is an at most $e$ dimensional Fano variety with terminal singularities over the function field $k(B)$.\\
Consider the generic fibre $F_{\rho}$.
By boundedness of Fano varieties  (see Corollary \ref{Fam2}), we can find constants $E=E(e), m=m(e),N=N(e)\in\mathbb{Z}^+$, only depending on $e$, 
such that 
$-mK_{F_{\rho}}$ defines a closed embedding 
for which the closed subvariety $F_{\rho}\subseteqq\mathbb{P}(\CoH^0(F_{\rho},-mK_{F_{\rho}})^{\star})$ has degree at most $E$ and $\dim_{k(B)}\CoH^0(F_{\rho},-mK_{F_{\rho}})\leqq N$.\\
Moreover, the biregular automorphism group $\Aut_{F_{\rho}}$ has a natural faithful linear action on the vector space $\CoH^0(F_{\rho},-mK_{F_{\rho}})^{\star}$, 
which makes the closed embedding  $F_{\rho}\hookrightarrow\mathbb{P}(\CoH^0(F_{\rho},-mK_{F_{\rho}})^{\star})$ $\Aut_{F_{\rho}}$-equivariant.\\  
Let $A$ be the image group $\Imag(N\to\Aut F{_\rho})$. By the above observation, $A$ is an Abelian group which acts faithfully via linear automorphisms 
on the closed subvariety $F_{\rho}\subseteqq\mathbb{P}(\CoH^0(F_{\rho},-mK_{F_{\rho}})^{\star})$.\\
Let $V\subsetneqq F_{\rho}$ be the reduced closed subscheme formed by the fixed points of $A$. (Note that $\dim V<\dim F$ since $A$ acts non-trivially on $F$.) 
As $A$ acts linearly on the closed subvariety $F_{\rho}\subseteqq\mathbb{P}(\CoH^0(F_{\rho},-mK_{F_{\rho}})^{\star})$, 
$V$ is the intersection of $F_{\rho}$ and the union of at most $N$ many linear spaces. Since the degree of $F$ is at most $E$, $V$ has at most $NE$ many components. Let $C=(NE)!$ (clearly it only depends on $e$) and 
let $V_1$ be an arbitrary irreducible component of $V$ endowed with the reduced subscheme structure.\\
By the definition of $A$, the points of $F_{\rho}$ fixed by $A$ are the same as the points of $F_{\rho}$ fixed by $N$. Hence $V$ is the closed subscheme formed by the fixed points of $N$. 
As $N$ is a normal subgroup of $G$, $V$ is $G$-invariant. Hence $G$ induces an action on the components of $V$. 
Therefore there exists a subgroup $G_0\leqq G$ with index bounded by $C=(NE)!$, which fixes all components of $V$. In particular $V_1$ is $G_0$-invariant.
Let $N_0=N\cap G_0$, clearly $V_1$ is formed by fixed points of $N_0$.\\
Let $W$ be the closure of $V_1$ in the complex variety $F$ endowed with the reduced scheme structure. Clearly $\dim W<\dim F$, and $W$ is a $G_0$-invariant closed subvariety of $F$ formed by fixed points of $N_0$.
Hence the $G_0$-action on $W$ descends to a $G_0/N_0$-action. This finishes the proof of the lemma.  
\end{proof}

Now we construct the proper closed subvariety $Y\subsetneqq X$ and the dominant $G$-equivariant morphism $Y\to W$ with rationally connected fibres.

\begin{lem}
\label{PullBackF}
Let 
\begin{itemize} 
\item $G_0$ be a finite group,
\item $X$ and $F$ be normal projective complex varieties endowed with (not necessarily faithful) $G_0$-actions via biregular automorphisms, such that
\item $F$ is $G_0\mathbb{Q}$-factorial and has terminal singularities, moreover 
\item let $f:X\to F$ be a $G_0$-equivariant dominant morphism with rationally connected fibres,
\item $W\subsetneqq F$ be a $G_0$-invariant closed subvariety, such that $\dim W<\dim F$,
\end{itemize}
then there exists
\begin{itemize} 
\item a closed subvariety $Y\subsetneqq X$, such that
\item  $\dim Y<\dim X$,
\item $Y$ is $G_0$-invariant,
\item$f$ restricts to a $G_0$-equivariant dominant morphism from $Y$ to $W$  (i.e. $f|_{Y}:Y\to W$ is dominant) with rationally connected general fibres.  
 \[
\xymatrix{
Y\ar@{^{(}->}[r] \ar[d]^{f|_{Y}} & X \ar[d]^f\\
W\ar@{^{(}->}[r] \ & F
}
\]
\end{itemize}
\end{lem}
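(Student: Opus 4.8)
The plan is to take $Y$ to be an appropriate $G_0$-invariant piece of the preimage $f^{-1}(W)$, using Lemma \ref{PullBack1} to control its fibres over $W$.

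First I would dispose of the easy parts. Since $f$ is a dominant morphism between irreducible projective varieties it is surjective, so $f^{-1}(W)$ is a nonempty proper closed subset of $X$ (proper because $W\subsetneqq F$), and it is $G_0$-invariant because $W$ is $G_0$-invariant and $f$ is $G_0$-equivariant; equip it with its reduced structure. As $X$ is irreducible, every closed subvariety of $f^{-1}(W)$ has dimension strictly smaller than $\dim X$, and $f$ carries every such subvariety into $W$; moreover $f(f^{-1}(W))=W$, so $f^{-1}(W)$ itself dominates $W$. Hence the whole content of the lemma is to find, inside $f^{-1}(W)$, a $G_0$-invariant closed subvariety $Y$ that still dominates $W$ and whose general fibre over $W$ is rationally connected: once $Y$ is $G_0$-invariant the restriction $f|_Y:Y\to W$ is automatically $G_0$-equivariant, and the square in the statement commutes because it is just $f$ restricted.

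To produce such a $Y$ I would invoke Lemma \ref{PullBack1} --- the Hacon--McKernan technique for building rationally connected fibrations --- applied to $f:X\to F$ and $W\subseteq F$, run $G_0$-equivariantly. In its plain form this yields a closed subvariety $Y_1\subseteq f^{-1}(W)$ with $f(Y_1)=W$ such that $f|_{Y_1}:Y_1\to W$ is dominant with rationally connected, hence irreducible, general fibres. The construction uses only auxiliary $G_0$-equivariant resolutions of singularities and of indeterminacies together with the choice of sufficiently general auxiliary data, and since $G_0$ is finite each such choice can be made $G_0$-invariantly; so it in fact returns a $G_0$-invariant closed subvariety $Y\subseteq f^{-1}(W)$ with $f(Y)=W$ for which the general fibre of $f|_Y$ over $W$ is rationally connected. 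By the previous paragraph $\dim Y<\dim X$, which yields all the assertions. (If the form of Lemma \ref{PullBack1} one uses already incorporates a finite group action, this step is simply a direct application of it.)

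The point that needs care --- and the only real obstacle --- is exactly this preservation of rational connectedness of the general fibre when one passes to a $G_0$-invariant model. Invariance cannot be forced crudely: replacing $Y_1$ by $\bigcup_{g\in G_0}gY_1$, or by the union of all irreducible components of $f^{-1}(W)$ that dominate $W$, would in general make the fibre over a general point of $W$ a union of several rationally connected varieties lying in one fibre of $f$, and such a union need not be rationally connected. What saves the argument is keeping the pulled-back family intact while carrying out all auxiliary choices $G_0$-equivariantly, so that the general fibre of $f|_Y$ over $w\in W$ remains the single irreducible rationally connected fibre produced by the Hacon--McKernan construction, now equipped with a compatible $G_0$-action. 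This would finish the proof.
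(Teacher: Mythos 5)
There is a genuine gap at the central step. You propose to apply Lemma \ref{PullBack1} directly to the morphism $f:X\to F$, but Lemma \ref{PullBack1} carries the hypothesis that the \emph{anticanonical bundle of the source is $f$-ample} (and that the source has klt singularities). For an arbitrary dominant morphism with rationally connected fibres this relative Fano condition simply does not hold, so the lemma is not applicable to $f$ itself. This is not a technicality one can wave away: the Hacon--McKernan result (Corollary 1.9 of \cite{HM07}) that underlies Lemma \ref{PullBack1} is a statement about Fano-type contractions, and the entire difficulty of Lemma \ref{PullBackF} is to reduce the general rationally connected fibration $f$ to a situation where that hypothesis is available.

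The paper does this by a double induction. One first reduces to $X$ smooth (resolving singularities and taking scheme-theoretic images afterwards), then inducts on the relative dimension $e=\dim X-\dim F$: running a $G_0$-equivariant relative MMP (Theorems \ref{MMP} and \ref{MMP2}) factors $f$, up to birational modification, into a chain of divisorial contractions, flips and Mori fibrations, each of which \emph{does} satisfy the relative anticanonical ampleness needed for Lemma \ref{PullBack1}. The subvariety $W$ is then pulled back one MMP step at a time via Lemma \ref{PullBack2}, with Lemma \ref{RC} guaranteeing that rational connectedness of general fibres survives composition; the case $e>0$ additionally passes through the base $B$ of the terminal Mori fibre space by induction on dimension. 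Your observation that one must not crudely symmetrize $Y_1$ over the $G_0$-orbit (since a union of rationally connected fibres need not be rationally connected) is a correct and relevant point, and equivariance is indeed handled by using the equivariant form of Lemma \ref{PullBack1} throughout; but without the MMP factorization the argument does not get off the ground.
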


The main technical tool of this section is a lemma of Yu. Prokhorov and C. Shramov (Lemma 3.4 of \cite{PS16}).
It is a $G$-equivariant version of the result of C. D. Hacon and J. McKernan (Corollary 1.9 of \cite{HM07}). 
It gives us a technique to find rationally connected fibrations. We state the lemma below.

\begin{lem}
\label{PullBack1}
Let 
\begin{itemize} 
\item $G$ be a finite group,
\item $X_1$ and $X_2$ be complex normal quasi-projective varieties endowed with (not necessarily faithfully) $G$-actions via biregular automorphisms,
\item $X_1$ have Kawamata log terminal singularities,
\item $f:X_1\to X_2$ be a $G$-equivariant dominant projective morphism with connected fibres, such that 
\item the anticanonical bundle of $X_1$ is $f$-ample, moreover
\item let $Y_2\subsetneqq X_2$ be a $G$-invariant closed subvariety, such that $\dim Y_2<\dim X_2$. 
\end{itemize} 
Then there exists a closed subvariety $Y_1\subsetneqq X_1$  such that 
\begin{itemize} 
\item $\dim Y_1<\dim X_1$, 
\item $Y_1$ is $G$-invariant,
\item $f$ restricts to a $G$-equivariant dominant morphism from $Y_1$ to $Y_2$, i.e. $f|_{Y_1}:Y_1\to Y_2$ is dominant,
\item $f|_{Y_1}$ has rationally connected general fibres.  
\end{itemize} 
\end{lem}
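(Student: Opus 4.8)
The statement is, up to notation, Lemma~3.4 of \cite{PS16}, so the plan is simply to quote it; below I indicate where it comes from and why the presence of the finite group $G$ is harmless, which is what makes it legitimate to cite it in the stated form.

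The non-equivariant content is Corollary~1.9 of \cite{HM07}: for a projective dominant morphism $f\colon X_1\to X_2$ with connected fibres between normal quasi-projective varieties over a field of characteristic zero, with $X_1$ having Kawamata log terminal singularities and $-K_{X_1}$ being $f$-ample, and for any closed subvariety $Y_2\subsetneqq X_2$ with $\dim Y_2<\dim X_2$, there is a closed subvariety $Y_1\subsetneqq X_1$ with $\dim Y_1<\dim X_1$ such that $f$ restricts to a dominant morphism $Y_1\to Y_2$ whose general fibres are rationally connected. The subtlety of this corollary, and the reason one cannot simply take $Y_1=f^{-1}(Y_2)$, is that the locus of points of $X_2$ over which the fibre of $f$ is rationally connected is only a \emph{dense open} subset, and $Y_2$ may be contained entirely in its complement; the $f$-ampleness of $-K_{X_1}$ is exactly what still lets one manufacture rationally connected subvarieties inside the fibres over a general point of $Y_2$, via the relative rational-connectedness machinery of \cite{HM07} (built on the Graber--Harris--Starr theorem and on the behaviour of free rational curves in fibrations whose fibres are log Fano). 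This geometric input is the only genuine obstacle, and I would take it as a black box.

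Granting Corollary~1.9 of \cite{HM07}, the $G$-equivariant upgrade is routine because $G$ is finite. One runs the Hacon--McKernan construction $G$-equivariantly throughout: in characteristic zero one has $G$-equivariant resolutions of singularities and $G$-equivariant relative minimal model programs available, and wherever the construction asks for a \emph{sufficiently general} auxiliary choice (a general complete intersection cut out of a fibre by sections of an $f$-ample divisor, say) one instead works with the finitely many $G$-translates of a general choice, which is again sufficiently general and $G$-invariant. Carrying the $G$-action along at every step produces a $G$-invariant $Y_1$ with all the asserted properties; the dimension bound, the dominance of $f|_{Y_1}\colon Y_1\to Y_2$ and the rational connectedness of its general fibres are unaffected, since a general fibre over a general point $w\in Y_2$ is, after translating by a group element, a general fibre over the general point $g^{-1}w\in Y_2$, and these remain rationally connected. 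This is precisely the argument of \cite[Lemma~3.4]{PS16}, to which I would refer for the details.
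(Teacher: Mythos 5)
Your proposal matches the paper exactly: the paper does not prove this lemma either, but simply quotes it as Lemma~3.4 of \cite{PS16}, the $G$-equivariant version of Corollary~1.9 of \cite{HM07}. Your additional remarks on why the finite group action is harmless are consistent with how that equivariant upgrade is carried out in \cite{PS16}, so citing the result in the stated form is legitimate and is precisely what the paper does.
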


 We will also need a technical result about composition of dominant morphisms with rationally connected fibres. 
 It is a simple consequence of Corollary 1.3 in \cite{GHS03}.

\begin{lem}
\label{RC}
Let $X, Y$ and $Z$ be complex projective varieties. Let $f:X\to Y$ and $g:Y\to Z$ be dominant morphisms between them with rationally connected general fibres.
Then the general fibres of $g\circ f:X\to Z$ are rationally connected. 
\end{lem}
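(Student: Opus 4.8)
The plan is to deduce the statement from Corollary 1.3 of \cite{GHS03}, which asserts that a dominant morphism of complex varieties whose base and whose general fibre are rationally connected has rationally connected total space. The key observation is that rational connectedness of the general fibre of $g\circ f$ can be checked fibre by fibre over $Z$: a general fibre of $g\circ f$ is of the form $(g\circ f)^{-1}(z)=f^{-1}(g^{-1}(z))$, so it fibres via $f$ over the (rationally connected) general fibre $g^{-1}(z)$ of $g$, and one only needs to know that this induced morphism still has rationally connected general fibres.

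First I would fix dense open subsets over which everything is well behaved. Since $f$ has rationally connected general fibres, there is a dense open subset $U\subseteq Y$ such that $f^{-1}(y)$ is rationally connected for every $y\in U$. Likewise there is a dense open $Z_0\subseteq Z$ such that for $z\in Z_0$ the fibre $g^{-1}(z)$ is irreducible and rationally connected, and such that a general fibre of $g\circ f$ over $Z_0$ is irreducible. Write $Y\setminus U=\bigcup_i V_i$ as a finite union of irreducible closed subsets. For each $i$, either $\dim g(V_i)<\dim Z$, in which case a general point of $Z$ avoids $g(V_i)$, or $g|_{V_i}$ is dominant, in which case a general fibre of $g|_{V_i}$ has dimension $\dim V_i-\dim Z<\dim Y-\dim Z=\dim g^{-1}(z)$. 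In either case, after shrinking $Z_0$ I may assume that for every $z\in Z_0$ the fibre $T_z:=g^{-1}(z)$ is irreducible, rationally connected, and not contained in any $V_i$; hence $T_z\cap U$ is a nonempty, thus dense, open subset of $T_z$.

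Next I would fix an arbitrary $z\in Z_0$ and consider the restriction $f_z:=f|_{(g\circ f)^{-1}(z)}$, a dominant morphism of projective complex varieties from $(g\circ f)^{-1}(z)=f^{-1}(T_z)$ onto $T_z$. For a general point $y\in T_z$ we have $y\in T_z\cap U$, so $f_z^{-1}(y)=f^{-1}(y)$ is rationally connected; thus $f_z$ is a dominant morphism with rationally connected general fibres onto the rationally connected variety $T_z$. By Corollary 1.3 of \cite{GHS03}, the total space $f^{-1}(T_z)=(g\circ f)^{-1}(z)$ is rationally connected. Since this holds for every $z$ in the dense open set $Z_0$, and a general fibre of $g\circ f$ is of this form, the general fibres of $g\circ f$ are rationally connected.

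As for the main obstacle: there is no deep point here, and the only step requiring genuine care is the dimension count in the second paragraph, showing that a general fibre of $g$ meets the ``good'' locus $U\subseteq Y$ in a dense open set, so that the hypothesis ``$f$ has rationally connected general fibres'' survives restriction to $T_z$. Everything else is a direct invocation of \cite{GHS03} together with the standard fact that over a dense open subset of the base the fibres of a dominant morphism between complex varieties are irreducible.
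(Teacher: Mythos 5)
Your proof is correct and follows exactly the route the paper intends: the paper offers no argument for this lemma beyond the remark that it is ``a simple consequence of Corollary 1.3 in \cite{GHS03}'', and your restriction of $f$ to the preimage $f^{-1}(T_z)$ of a general fibre of $g$, followed by an application of that corollary, is precisely that consequence spelled out (including the one step needing care, namely that $T_z$ meets the good locus of $f$ densely). The only caveat is your closing ``standard fact'': general fibres of a dominant morphism need not be irreducible in general (consider $x\mapsto x^2$ on $\mathbb{A}^1$), but in the present situation the irreducibility of $(g\circ f)^{-1}(z)$ does hold, since the rationally connected general fibres of $f$ and $g$ force $k(Z)$ to be algebraically closed in $k(X)$.
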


\begin{lem}
\label{PullBack2}
Let 
\begin{itemize} 
\item $G$ be a finite group,
\item $X_1$ and $X_2$ be normal projective complex varieties endowed with (not necessarily faithful) $G$-actions via biregular automorphisms, 
\item $X_1$ have terminal singularities, 
\item $F$  be a normal projective complex variety endowed with a (not necessarily faithful) $G$-action via biregular automorphisms,
\item $h_1:X_1\to F$ and $h_2:X_2\to F$ be $G$-equivariant dominant morphisms,
\item $f:X_1\dashrightarrow X_2$ be a $G$-equivariant rational map such that the following $G$-equivariant commutative diagram
 \[
\xymatrix{
X_1\ar@ {-->}[r]^f \ar[d]_{h_1} & X_2 \ar[ld]^{h_2}\\
F
}
\]
is a step of a $G$-equivariant relative MMP over the base variety $F$, 
i. e. $f$ is either a Mori fibration, a flip or a divisorial contraction, 
\item $W$ be a $G$-invariant closed subvariety of $F$,
\item $Y_2\subsetneqq X_2$ be a $G$-invariant closed subvariety, such that $\dim Y_2<\dim X_2$, 
\item moreover assume that,  $h_2$ restricts to a dominant $G$-equivariant morphism from $Y_2$ to $W$ ( i.e. $h_2|_{Y_2}:Y_2\to W$ is dominant)
such that the general fibres of $h_2|_{Y_2}$ are rationally connected.  
\end{itemize} 
Then there exists a closed subvariety $Y_1\subsetneqq X_1$  such that 
\begin{itemize} 
\item $\dim Y_1<\dim X_1$, 
\item $Y_1$ is $G$-invariant,
\item $h_1$ restricts to a dominant $G$-equivariant morphism from $Y_1$ to $W$  (i.e. $h_1|_{Y_1}:Y_1\to W$ is dominant) with rationally connected general fibres.  
\end{itemize} 
\end{lem}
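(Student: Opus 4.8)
The plan is to argue by cases on the type of the Minimal Model Program step $f\colon X_1\dashrightarrow X_2$, reducing each case to one application of the Hacon--McKernan/Prokhorov--Shramov pull-back technique (Lemma \ref{PullBack1}) followed by the composition result for rationally connected fibrations (Lemma \ref{RC}). The unifying observation is that in all three cases there is a $G$-equivariant \emph{morphism} $\pi\colon X_1\to T$ with connected fibres such that $-K_{X_1}$ is $\pi$-ample (so Lemma \ref{PullBack1} applies, since $X_1$ has terminal, hence Kawamata log terminal, singularities), together with a $G$-equivariant morphism $\beta\colon T\to F$ satisfying $h_1=\beta\circ\pi$, and a proper $G$-invariant closed subvariety $Y_T\subsetneq T$ for which $\beta|_{Y_T}\colon Y_T\to W$ is dominant with rationally connected general fibres. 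Granting this, Lemma \ref{PullBack1} applied to $\pi$ and $Y_T$ produces a $G$-invariant $Y_1\subsetneq X_1$ with $\dim Y_1<\dim X_1$ and $\pi|_{Y_1}\colon Y_1\to Y_T$ dominant with rationally connected general fibres; since $h_1|_{Y_1}=(\beta|_{Y_T})\circ(\pi|_{Y_1})$ is then a composition of dominant morphisms of projective varieties with rationally connected general fibres, Lemma \ref{RC} yields that $h_1|_{Y_1}\colon Y_1\to W$ is dominant with rationally connected general fibres, which is exactly the required conclusion (together with the $G$-invariance and dimension bound on $Y_1$ coming from Lemma \ref{PullBack1}).

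When $f$ is a divisorial contraction or a Mori fibration, $f\colon X_1\to X_2$ is itself such a morphism: it is the contraction of a $K_{X_1}$-negative extremal ray over $F$, so $-K_{X_1}$ is $f$-ample and $f$ has connected fibres, and one simply takes $T=X_2$, $\pi=f$, $\beta=h_2$, $Y_T=Y_2$. In these two cases there is nothing to do beyond invoking Lemmas \ref{PullBack1} and \ref{RC}.

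The only case that needs extra work is the flip. Here $f\colon X_1\dashrightarrow X_2$ is not a morphism, so instead I would use the flipping contraction $\pi\colon X_1\to T$ (with $-K_{X_1}$ $\pi$-ample and connected fibres) and the flipped contraction $\pi^+\colon X_2\to T$, which is a \emph{small birational} morphism; both are $G$-equivariant, and since the flip is a step of the $G$-equivariant relative MMP over $F$ the base $T$ carries a $G$-action and there is a $G$-equivariant morphism $\beta\colon T\to F$ with $h_1=\beta\circ\pi$ and $h_2=\beta\circ\pi^+$. I would then set $Y_T=\pi^+(Y_2)$. Since $\pi^+$ is birational, $\dim Y_T\le\dim Y_2<\dim X_2=\dim T$, so $Y_T\subsetneq T$, and $Y_T$ is $G$-invariant because $Y_2$ is and $\pi^+$ is $G$-equivariant. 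The substantive point is that $\beta|_{Y_T}\colon Y_T\to W$ is dominant with rationally connected general fibres: dominance is immediate from $\beta(Y_T)=h_2(Y_2)$, and for the fibres one verifies the set-theoretic identity $(\beta|_{Y_T})^{-1}(w)=\pi^+\bigl((h_2|_{Y_2})^{-1}(w)\bigr)$, so that a general fibre of $\beta|_{Y_T}$ is the image under the morphism $\pi^+$ of a (rationally connected, projective) general fibre of $h_2|_{Y_2}$, hence again rationally connected and projective.

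I expect the flip case to be the only genuine obstacle, precisely because $Y_2$ might lie inside the codimension $\ge 2$ exceptional locus of $\pi^+$, so that $\pi^+|_{Y_2}$ need not be birational; the fibre identity above, combined with the elementary fact that the image of a rationally connected projective variety under a morphism is rationally connected, is exactly what makes the argument go through in that situation. A minor bookkeeping point is irreducibility: one should run the argument with $Y_2$, and hence $Y_1$, irreducible and equipped with the reduced induced structure, so that ``dominant with rationally connected general fibres'' is unambiguous; this is compatible with the $G$-action, since strict transforms and images (under $G$-equivariant morphisms) of $G$-invariant subvarieties are again $G$-invariant.
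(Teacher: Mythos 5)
Your proposal is correct and follows essentially the same route as the paper: the Mori fibration and divisorial contraction cases are handled by a direct application of Lemma \ref{PullBack1} followed by Lemma \ref{RC}, and the flip case is reduced to the same two lemmas by passing to the flipping contraction $X_1\to T$ and taking the image of $Y_2$ under the flipped contraction $X_2\to T$ (the paper's $X_3$ and $Y_3$). Your justification that the general fibres of $Y_T\to W$ are rationally connected, via the fibre identity and the fact that images of rationally connected projective varieties are rationally connected, is in fact slightly more detailed than the paper's, which dismisses this point with ``clearly.''
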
 

\begin{proof}
Assume first that $f$ is either a Mori fibration or a divisorial contraction. 
Then we can apply Lemma \ref{PullBack1} to get a $G$-invariant closed subvariety $Y_1\subsetneqq X_1$ which dominates $Y_2$, and for which the fibres of $f|_{Y_1}:Y_1\to Y_2$ are rationally connected.
Hence by Lemma \ref{RC} the dominant morphism $h_1|_{Y_1}:Y_1\to W$ has rationally connected general fibres. This finishes the proof of the above cases.\\
If $f$ is a flip then there is a normal projective variety $X_3$ endowed with a $G$-action over the base variety $F$, and we have a $G$-equivariant commutative diagram 
 \[
\xymatrix{
X_1\ar@ {-->}[r]^f \ar[d]_{k_1} & X_2 \ar[ld]_{k_2} \ar[ldd]^{h_2}\\
X_3 \ar[d]_{h_3}\\
F,
}
\]
where $k_1:X_1\to X_3$ is a dominant $G$-equivariant (projective) morphism with connected fibres, such that the anticanonical bundle of $X_1$ is $k_1$-ample. Moreover $h_1=h_3\circ k_1$.\\
Let $Y_3\subsetneqq X_3$ be the scheme theoretic image of $Y_2$ by the projective morphism $k_2$. (Note that $\dim Y_3<\dim X_3=\dim X_2$.) 
Clearly $Y_2$ is a $G$-invariant closed subvariety, it dominates $W$, and the general fibres of $h_3|_{Y_3}: Y_3\to W$ are rationally connected.\\
Using the $G$-invariant closed subvariety $Y_3\subsetneqq X_3$, we can apply Lemma \ref{PullBack1} to get a $G$-invariant closed subvariety $Y_1\subsetneqq X_1$ 
which dominates $Y_2$, moreover for which the general fibres of $k_1|_{Y_1}:Y_1\to Y_3$ are rationally connected. Hence by Lemma \ref{RC}, the general fibres of the dominant morphism $h_1|_{Y_1}:Y_1\to W$ are also rationally connected.
This finishes the proof of the lemma.
\end{proof}

Now we are ready to prove Lemma \ref{PullBackF}.

\begin{proof}[Proof of Lemma \ref{PullBackF}]
First realize that it is enough to prove the lemma under the extra assumption that $X$ is smooth. Indeed, if $X$ is not smooth, then we can $G_0$-equivariantly resolve the singularities of $X$. Let it be $X_1\to X$.
By the smooth case, there exists $Y_1\subsetneqq X_1$ satisfying the conditions of the lemma for $X_1$. Its scheme theoretic image $Y\subsetneqq X$ will satisfy the conditions of the lemma for $X$.\\
We apply induction on the relative dimension $e=\dim X-\dim F$. Assume that $e=0$. By the observation above we can assume that $X$ is smooth. 
Since the general fibres of $f$ are zero dimensional rationally connected varieties, i.e. rational points, $f$ is a birational morphism.
If $f$ is an isomorphism then the case is finished, so assume otherwise. We can run a $G$-equivariant MMP on $X$ over $Z$ by Theorem \ref{MMP2}.
The MMP factorizes $f$ into a sequence of $G$-equivariant flips and divisorial contractions. Hence the last map of the MMP is of the form of
 \[
\xymatrix{
X_1\ar@ {-->}[r]^m\ar[d] & X_2 \ar[ld]^{\cong}\\
F,
}
\]
where $m$ is either a flip or a divisorial contraction, and $X_2\to F$ is an isomorphism. 
We can use Lemma \ref{PullBack2} and apply induction on the number of steps of the MMP.
The induction starts as the MMP stops at a variety isomorphic to $F$. So if the MMP has one step, then we can apply Lemma \ref{PullBack2} directly. 
This finishes the proof when $e=0$.\\
So let $X$ be a smooth projective variety and assume that $e>0$. We can run a $G_0$-equivariant MMP on (the smooth projective) variety $X$ relative to $F$ (by Theorem \ref{MMP}). It ends up at a Mori fibre space $m:X_1\to B$.
 \[
\xymatrix{
X\ar@ {-->}[r]^{\sim}\ar[rd] & X_1\ar[r]^m\ar[d] & B \ar[ld]\\
& F
}
\]
As $B$ is a normal projective complex variety endowed with a $G_0$-action such that $\dim B<\dim X$, we can apply induction to $B$. (Note that $B$ is not necessarily smooth.) 
Therefore we can find a closed subvariety $V\subsetneqq B$ which satisfies the conditions of the lemma. By Lemma \ref{PullBack2},
we can apply induction on the number of steps of the MMP to pull back $V\subsetneqq B$ to $Y\subsetneqq X$. This finishes the proof.
\end{proof}

We need to consider a couple of lemmas about $G$-equivariant vector bundles before proving Lemma \ref{YW}.

\begin{lem}
\label{NB}
Let $X$ be a smooth complex variety, and let $G\leqq \Aut (X)$ be a subgroup of its automorphism group. Let $Y\subseteqq X$ be a $G$-invariant closed subvariety of $X$.
Then the induced $G$-action on the normal bundle of $Y$, denoted by $\nu(Y\subseteqq X)$, is faithful.
\end{lem}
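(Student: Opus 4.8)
The plan is to show that any $g\in G$ acting trivially on the vector bundle $\nu(Y\subseteqq X)$ must be the identity automorphism of $X$. First I would observe that the $G$-action on $\nu(Y\subseteqq X)$ covers the $G$-action on the base $Y$, so triviality of the action on $\nu(Y\subseteqq X)$ already forces $g|_Y=\id_Y$, i.e. $g$ fixes $Y$ pointwise. Since we work over $\mathbb{C}$ and $g$ has finite order (in every application $G$ is finite), I may invoke the standard facts that the fixed locus $X^g$ is a smooth closed subvariety of $X$ and that $\T_y(X^g)=(\T_yX)^{dg_y}$ for every $y\in X^g$, where $dg_y$ is the \emph{semisimple} automorphism of $\T_yX$ induced by $g$.

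Next I would argue infinitesimally at a point $y$ lying in the smooth locus of $Y$ (nonempty, and there the normal sequence $0\to\T_yY\to\T_yX\to\nu(Y\subseteqq X)_y\to 0$ is exact with $\nu(Y\subseteqq X)$ genuinely locally free). The automorphism $dg_y$ preserves this sequence: it restricts to $d(g|_Y)_y=\id$ on the subspace $\T_yY$ because $g|_Y=\id_Y$, and it induces the identity on the quotient $\nu(Y\subseteqq X)_y$ by hypothesis. A diagonalizable endomorphism that is the identity on a subspace and on the corresponding quotient has $1$ as its only eigenvalue, hence equals the identity; thus $dg_y=\id_{\T_yX}$.

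Finally, $dg_y=\id$ gives $\T_y(X^g)=\T_yX$, so $\dim_y X^g=\dim\T_yX=\dim_y X$ since $X$ is smooth. Being a smooth closed subvariety of $X$ attaining the full dimension of $X$ at $y$, the fixed locus $X^g$ contains the irreducible component of $X$ through $y$; as $X$ is a variety this yields $X^g=X$, i.e. $g=\id_X$, and hence the $G$-action on $\nu(Y\subseteqq X)$ is faithful. (If $X$ is reducible, the same argument shows $g$ is trivial on every irreducible component meeting $Y$, which is all that $\nu(Y\subseteqq X)$ can detect.)

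There is no serious obstacle in this argument; the one point that genuinely matters is the finiteness of the order of $g$, which is what makes $dg_y$ semisimple and $X^g$ smooth. Without it the statement is false — for instance $(x_1,x_2)\mapsto(x_1,x_2+x_1)$ on $\mathbb{A}^2$ fixes the line $\{x_1=0\}$ pointwise and acts trivially on its normal bundle while not being the identity — so it is essential to make clear where this hypothesis enters.
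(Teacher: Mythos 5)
Your proof is correct and follows essentially the same route as the paper's: reduce to an element $g$ fixing $Y$ pointwise, look at the tangent space at a point of $Y$, use semisimplicity of $dg$ to see that triviality on $\T_yY$ and on the normal space forces triviality on $\T_yX$, and then invoke the fact that a finite-order automorphism acting trivially on the tangent space at a fixed point is the identity (Lemma 4 of \cite{Po14} in the paper). You are also right to flag that finiteness of the order of $g$ is genuinely needed: the statement allows an arbitrary subgroup $G\leqq\Aut(X)$, the paper's proof silently uses both semisimplicity and Popov's lemma, and your unipotent example on $\mathbb{A}^2$ shows the claim fails without that hypothesis, although in every application in the paper $G$ is finite.
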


\begin{proof}
The $G$-action on $Y\subseteqq X$ induces a $G$-action on the normal bundle, therefore we only need to prove that it is faithful.\\
Let $g\in G$ be an arbitrary group element, we will prove that the action of $g$ on $\nu(Y\subseteqq X)$ cannot be trivial. If the action of $g$ on $Y$ is not trivial, then we have nothing to prove, so assume otherwise.\\
Let $P\in Y\subseteqq X$ be an arbitrary closed point of $Y$. Since the action of $g$ on $X$ is not trivial, however it fixes the point $P$, $g$ should act non-trivially on the tangent space $\T_PX$ (Lemma 4 of \cite{Po14}). 
Since $g$ acts trivially on the closed subvariety $Y$, the action of $g$ is trivial on $\T_PY$.  Putting these together, we can see that 
the action of $g$ on the normal space at the point $P$ $\nu_P(Y\subseteqq X)=\T_PX/\T_PY$ is non-trivial. Since $g\in G$ is an arbitrary element, this finishes the proof.
\end{proof}

\begin{defn}
Let $X$ be a variety, $E_X\to X$ be a vector bundle over $X$ and $G$ be a group. Assume that $G$ acts on $X$ (not necessarily faithfully) via biregular automorphisms.
We say that $G$ acts on $E_X\to X$ via vector bundle automorphisms if the commutative diagram below is $G$-equivariant, 
i.e. the $G$-action on the total space $E_X$ is compatible with the predescribed action on the base variety $X$,
\[
\xymatrix{
E_X\ar[r]\ar[d] & E_X \ar[d]\\
X\ar[r] & X, 
}
\]
and $G$ respects the vector bundle structure.\\
By $\dim E_X$ we denote the dimension of the total space of the vector bundle,
i.e. $\dim E_X=\rk E_X+\dim X$ (where $\rk E_X$ is the rank of the vector bundle). 

\end{defn}

\begin{lem}
\label{PBVB}
Let
\begin{itemize}
\item $G$ be a group,
\item $X_1$ and $X$ be projective varieties endowed with (not necessarily faithful) $G$-actions via biregular automorphisms, 
\item $f:X_1\to X$ be a $G$-equivariant dominant morphism,
\item $E_X\to X$ be a vector bundle, on which $G$ acts faithfully via vector bundle automorphisms, 
\item $E_{X_1}\to X_1$ be the vector bundle obtained by pulling back the vector bundle $E_X\to X$ along $f:X_1\to X$.
 \[
\xymatrix{
E_{X _1}\ar[r]\ar[d] & E_X \ar[d]\\
X_1\ar[r] & X 
}
\]
\end{itemize}
Then $G$ acts faithfully on $E_{X_1}\to X_1$ via vector bundle automorphisms.
\end{lem}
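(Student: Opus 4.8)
The plan is to treat the two assertions of the lemma separately: that $G$ acts on $E_{X_1}\to X_1$ via vector bundle automorphisms, and that this action is faithful. The first is a formal consequence of the functoriality of pullback, while the second is the only point that genuinely requires an argument.

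First I would recall that, by definition, $E_{X_1}=X_1\times_X E_X$ as schemes over $X_1$, the projection $E_{X_1}\to X_1$ being the first projection and the vector bundle structure being inherited fibrewise from $E_X$. Since $f:X_1\to X$ is $G$-equivariant and $G$ acts on $E_X$ over its given action on $X$, the diagonal $G$-action on the fibre product $X_1\times_X E_X$ is well defined; it is visibly compatible with the prescribed $G$-action on $X_1$, and because the transition functions of $E_{X_1}$ are pulled back from those of $E_X$, it respects the linear structure on the fibres. Hence $G$ acts on $E_{X_1}\to X_1$ via vector bundle automorphisms, and moreover the natural map $p:E_{X_1}\to E_X$ (the second projection of the fibre product) is $G$-equivariant.

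Next I would prove faithfulness. The key observation is that $p:E_{X_1}\to E_X$ is surjective: it is the base change of $f$ along the projection $E_X\to X$, and $f$ itself is surjective, since a dominant morphism of irreducible projective varieties has closed dense --- hence full --- image. Now suppose $g\in G$ acts trivially on $E_{X_1}$. Given any $v\in E_X$, choose $y\in E_{X_1}$ with $p(y)=v$; then by $G$-equivariance of $p$ we get $g\cdot v=g\cdot p(y)=p(g\cdot y)=p(y)=v$. Thus $g$ acts trivially on $E_X$, and since $G$ acts faithfully on $E_X$ by hypothesis, $g$ is the identity. Therefore the $G$-action on $E_{X_1}\to X_1$ is faithful, which completes the proof.

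The only mild subtlety --- and it is barely an obstacle --- is the surjectivity of $f$, which is where projectivity (or at least properness) together with irreducibility of the varieties is used; with that in hand the faithfulness argument is a one-line equivariance chase, and no input from birational geometry or the MMP is needed here.
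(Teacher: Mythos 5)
Your proof is correct and follows the same route as the paper's: the paper likewise observes that the induced action is by vector bundle automorphisms, that $E_{X_1}\to E_X$ is surjective because $X_1\to X$ is, and that faithfulness then transfers from $E_X$ to $E_{X_1}$. You have merely spelled out the equivariance chase and the surjectivity of the dominant projective morphism $f$ in more detail than the paper does.
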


\begin{proof}
Clearly $G$ acts on $E_{X_1}\to X_1$ $G$-equivariantly, via vector bundle automorphisms. $E_{X_1}\to E_X$ is surjective, as so does $X_1\to X$. 
Hence the $G$-action on $E_{X_1}$ is faithful, as it is faithful on $E_X$.
\end{proof}

As the next step, we finally prove the core result of the section, Lemma \ref{YW}.

\begin{lem}
\label{YW}
Let $d$ be a non-negative integer. There exists a constant $C=C(d)\in\mathbb{Z}^+$, only depending on $d$, such that, if 
\begin{itemize} 
\item $G$ is a finite group,
\item$X$ and $Z$ are smooth projective complex varieties endowed with (not necessarily faithfully) $G$-actions via biregular automorphisms, 
\item $d=\dim X$ is the dimension of $X$, and $\dim X\geqq\dim Z$,
\item $f:X\to Z$ is a $G$-equivariant dominant morphism with rationally connected general fibres,
\item $N\unlhd G$ is a normal Abelian subgroup such that $N$ acts trivially on $Z$,
i.e. the $G$-action on $Z$ descends to a $G/N$-action on $Z$,
\item $E_X\to X$ is a vector bundle, on which $G$ acts faithfully via vector bundle automorphisms (in such a way that the $G$-action is compatible with the $G$-action on $X$), 
\end{itemize} 
then either
\begin{itemize}
\item $N$ acts trivially on $X$,
i.e. the $G$-action on $X$ descends to a $G/N$-action on $X$,
\end{itemize} 
or there exists
\begin{itemize} 
\item a subgroup $G_0\leqq G$ such that its index is bounded by $C$, i.e. $|G:G_0|<C$, moreover there exist
\item$Y$ and $W$ smooth projective complex varieties endowed with (not necessarily faithful) $G_0$-actions via biregular automorphisms, such that
\item $\dim Y<d$,
\item $N_0=G_0\cap N$ acts trivially on $W$, i.e.
the $G_0$-action on $W$ descends to a $G_0/N_0$-action, moreover, there exist
\item a $G_0$-equivariant dominant morphism $g:Y\to W$ with rationally connected general fibres,
\item a vector bundle $E_Y\to Y$, on which $G_0$ acts faithfully via vector bundle automorphisms (in such a way that the $G_0$-action is compatible with the $G_0$-action on $Y$),  moreover
\item $\dim E_Y$=$\dim E_X$, i.e. the dimension of the total space of the vector bundle over $Y$ 
is equal to the dimension of the total space of the vector bundle over $X$.
\end{itemize}
\end{lem}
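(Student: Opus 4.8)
The plan is to carry out the four-step program announced before Lemma \ref{F}. First I would feed $G, X, Z, N, f$ to Lemma \ref{F}, which applies since $X$ and $Z$ are in particular smooth. If $N$ acts trivially on $X$ we are in the first alternative and there is nothing to prove. Otherwise Lemma \ref{F} returns a smooth projective $X_0$ with a $G$-equivariant dominant birational morphism $X_0 \to X$, a $G$-equivariant dominant morphism $f_0:X_0 \to F$ with rationally connected general fibres onto a normal projective $G\mathbb{Q}$-factorial variety $F$ with terminal singularities on which $N$ acts non-trivially, and a $G$-equivariant Fano fibration $h:F \to B$ with $\dim B < \dim F$ on whose base $N$ acts trivially. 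Since $f_0$ is dominant and $X_0 \to X$ is birational, $\dim F \leqq \dim X_0 = \dim X = d$, so the relative dimension of $h$ is at most $d$.

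Next I would apply Lemma \ref{W} with $e = d$ to $G, F, B, h, N$: this produces the constant $C = C(d)$ of the present lemma, a subgroup $G_0 \leqq G$ with $|G:G_0| < C$, and a $G_0$-invariant closed subvariety $W \subsetneqq F$ with $\dim W < \dim F$ on which $N_0 := G_0 \cap N$ acts trivially. This is the only point where the group is shrunk, which is why $C$ depends on $d$ alone. Restricting all actions to $G_0$ and, if necessary, replacing $F$ by a $G_0$-equivariant small $\mathbb{Q}$-factorialization (carrying $W$ to its $G_0$-invariant strict transform, on which $N_0$ still acts trivially) and $X_0$ by a $G_0$-equivariant resolution of the induced indeterminacy, I would then apply Lemma \ref{PullBackF} --- whose engine is the rationally connected fibration technique of Lemma \ref{PullBack1} --- to $f_0:X_0 \to F$ and $W \subsetneqq F$. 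Its output is a $G_0$-invariant closed subvariety $Y_0 \subsetneqq X_0$ with $\dim Y_0 < \dim X_0 = d$ and a $G_0$-equivariant dominant morphism $f_0|_{Y_0}:Y_0 \to W$ with rationally connected general fibres.

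It then remains to manufacture the bundle. Pulling $E_X \to X$ back along $X_0 \to X$ gives, by Lemma \ref{PBVB}, a vector bundle $E_{X_0} \to X_0$ with a faithful $G_0$-action and $\dim E_{X_0} = \dim E_X$. Using the zero section $X_0 \subseteq E_{X_0}$ I regard $Y_0$ as a $G_0$-invariant closed subvariety of the smooth quasi-projective total space $E_{X_0}$ and take a $G_0$-equivariant embedded resolution $\pi:\widetilde{E} \to E_{X_0}$ (a composite of blow-ups along smooth $G_0$-invariant centres, available by functorial resolution) for which $\widetilde{E}$ is smooth, $\dim \widetilde{E} = \dim E_X$, and the strict transform $\widetilde{Y} \subseteq \widetilde{E}$ of $Y_0$ is smooth, $G_0$-invariant, and of dimension $\dim Y_0 < d$. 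Since $\pi$ is birational and $G_0$ acts faithfully on $E_{X_0}$, it acts faithfully on $\widetilde{E}$, so by Lemma \ref{NB} the induced $G_0$-action on $E_{\widetilde{Y}} := \nu(\widetilde{Y} \subseteq \widetilde{E})$ is faithful, and $\dim E_{\widetilde{Y}} = \dim \widetilde{E} = \dim E_X$. For the base I take a $G_0$-equivariant resolution $\widetilde{W} \to W$; an element of $N_0$ acting trivially on a dense open subset of the variety $\widetilde{W}$ acts trivially everywhere, so $N_0$ acts trivially on $\widetilde{W}$. Now I $G_0$-equivariantly resolve the indeterminacy of the rational map $\widetilde{Y} \dashrightarrow \widetilde{W}$ (namely $\widetilde{Y} \to Y_0 \to W$ followed by the birational inverse of $\widetilde{W} \to W$), and then the singularities of the resulting variety, obtaining a smooth projective $Y$ with a $G_0$-equivariant birational morphism $Y \to \widetilde{Y}$ and a $G_0$-equivariant morphism $g:Y \to \widetilde{W}$; since $Y \to \widetilde{Y}$ and $\widetilde{Y} \to Y_0$ are birational, $g$ is dominant, and by Lemma \ref{RC} together with the fact that over a general point of $\widetilde{W}$ the fibre of $g$ coincides with the corresponding fibre of $Y \to W$, the general fibres of $g$ are rationally connected. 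Finally let $E_Y \to Y$ be the pullback of $E_{\widetilde{Y}}$ along $Y \to \widetilde{Y}$; by Lemma \ref{PBVB} the $G_0$-action on $E_Y$ is faithful and $\dim E_Y = \dim E_{\widetilde{Y}} = \dim E_X$. The data $C(d), G_0, Y, W := \widetilde{W}, g, E_Y, N_0$ then satisfies every clause of the statement.

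I expect the real difficulty to lie not in any individual step but in this last one and the bookkeeping it forces. The output bundle has to be extracted as the normal bundle of a resolved model of $Y$ inside a resolved model of the \emph{total space} of $E_X$, because this is precisely what keeps $\dim E_Y = \dim E_X$ while $\dim Y$ drops below $d$ (a mere pullback of $E_X$ would lose rank), and one must check that the embedded resolution neither alters the dimension of the ambient total space nor destroys $G_0$-invariance of $Y$, that faithfulness survives the resolutions (which hinges entirely on Lemma \ref{NB}), that resolving the base keeps the $N_0$-action trivial, and that resolving indeterminacies keeps the general fibres of $g$ rationally connected. Matching the hypotheses of Lemma \ref{PullBackF} after passing from $G$ to $G_0$ --- in particular restoring the $\mathbb{Q}$-factoriality of $F$ --- is the other place where some care is needed.
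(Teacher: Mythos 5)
Your proposal is correct and follows essentially the same route as the paper: Lemma \ref{F}, then Lemma \ref{W}, then Lemma \ref{PullBackF}, and finally the normal bundle of a smoothed model of $Y$ inside the (pulled-back) total space of $E_X$, followed by resolving $W$ and the indeterminacy and pulling the bundle back. The only cosmetic difference is that you perform the embedded resolution directly in the total space $E_{X_0}$, whereas the paper resolves the pair $Y\subseteqq X_0$ first and then pulls the bundle back; your explicit handling of the $G_0\mathbb{Q}$-factoriality of $F$ after shrinking $G$ is a detail the paper leaves implicit.
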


\begin{proof}
We can assume that $N$ acts non-trivially on $X$.
Fix $d, G, X, Z, f, N, E_X$ as in the lemma. 
By Lemma \ref{F} we can find data $X_0,X_0\overset{\sim}{\to}X, F, B, X_0\to F\to B$. 
By Lemma \ref{PBVB} the pullback of the vector bundle $E_X\to X$ along  $X_0\to X$ results a vector bundle $E_{X_0}\to X_0$ on which $G$ acts faithfully via vector bundle automorphisms. 
Replace $X$ by $X_0$ and $E_X$ by $E_{X_0}$.  By Lemma \ref{W} we can find data $C, G_0, N_0, W$. 
As the next step, by Lemma \ref{PullBackF} we can pull back $W$ to a proper closed $G_0$-invariant subvariety $Y\subsetneqq X$ such that $Y\to W$ is $G_0$-equivariant, dominant and has rationally connected general fibres.
Therefore we have the following $G_0$-equivariant diagram. 
 \[
\xymatrix{
                                                        & E_X\ar[d]\\
Y\ar@{^{(}->}[r] \ar[d]^{f|_{Y}} & X \ar[d]^f\\
W\ar@{^{(}->}[r]                          & F
}
\]
We left with the task to replace $Y$ and $W$ by smooth varieties, 
moreover we need to establish a vector bundle over the smooth replacement of $Y$ on which $G_0$ acts faithfully via vector bundle automorphisms.\\
First resolve $G_0$-equivariantly the singularities of the closed subvariety $Y\subsetneqq X$. This results a $G_0$-equivariant commutative diagram
 \[
\xymatrix{
Y_1\ar@{^{(}->}[r]\ar[d]^{\sim} & X_1 \ar[d]^{\sim}\\
Y\ar@{^{(}->}[r] & X, 
}
\]
where $Y_1$ and $X_1$ are smooth varieties endowed with $G_0$-actions via biregular automorphisms,
and the $G_0$-equivariant morphisms $Y_1\to Y$ and $X_1\to X$ are dominant and birational.\\
Let $E_{X_1}\to X_1$ be the vector bundle over $X_1$ obtained by pulling back the vector bundle $E_X\to X$. 
By Lemma \ref{PBVB} $E_{X _1}\to X_1$ is a vector bundle on which $G_0$ acts faithfully via vector bundle automorphisms.
Consider the normal bundle $\nu(Y_1\subseteqq E_{X_1})$.  
By Lemma \ref{NB} $G_0$ acts on $\nu(Y_1\subseteqq E_{X_1})$ faithfully via vector bundle automorphisms. Clearly  $\dim \nu(Y_1\subseteqq E_{X_1})=\dim E_{X_1}=\dim E_X$.\\
Let $W_2\to W$ be a $G_0$-equivariant resolution of the singularities of $W$. We can $G_0$-equivariantly (and smoothly) resolve the indeterminacies of the dominant rational map $Y_1\dashrightarrow W_2$.
This gives us the following $G_0$-equivariant commutative diagram.
 \[
\xymatrix{
 & Y_2 \ar[d] \ar[ldd]\\
 & Y_1 \ar[d] \ar@ {-->}[ld]\\
W_2\ar[r] & W
}
\]
$Y_2\to W_2$ is a $G_0$-equivariant morphism of smooth projective varieties. Moreover $Y_2$ is $G_0$-equivariantly birational to $Y_1$, hence it is $G_0$-equivariantly birational to $Y$. In particular $\dim Y_2<d$.
Also note that the general fibres of $Y_2\to W_2$ are rationally connected varieties.\\
Let $E_{Y_2}\to Y_2$ be the pullback of the vector bundle $\nu(Y_1\subseteqq E_{X_1})\to Y_1$ along the morphism $Y_2\to Y_1$. 
By Lemma \ref{PBVB} $G_0$ acts on $E_{Y_2}\to Y_2$ faithfully via vector bundle automorphism.\\
The data $C, G_0, N_0,Y_2, W_2, Y_2\to W_2, E_{Y_2}$ satisfies the claim of the lemma. This finishes the proof.
\end{proof}

In the remaining of the section we apply Lemma \ref{YW} to build a vector bundle with the desired properties.

\begin{lem}
\label{Bundle1}
Let $d$ be a non-negative integer. There exists a constant $C=C(d)\in\mathbb{Z}^+$, only depending on $d$, such that, if 
\begin{itemize}
\item $G$ is a finite group,
\item$X$ and $Z$ are smooth projective complex varieties endowed with (not necessarily faithfully) $G$-actions via biregular automorphisms, 
\item $d=\dim X$ is the dimension of $X$, and $\dim X\geqq\dim Z$,
\item $f:X\to Z$ is a $G$-equivariant dominant morphism with rationally connected general fibres,
\item $N\unlhd G$ is a normal Abelian subgroup such that $N$ acts trivially on $Z$,
i.e. the $G$-action on $Z$ descends to a $G/N$-action on $Z$,
\item $E_X\to X$ is a vector bundle, on which $G$ acts faithfully via vector bundle automorphisms (in such a way that the $G$-action is compatible with the $G$-action on $X$), 
and $G$ respects the vector bundle structure,
\end{itemize}
then there exists
\begin{itemize}
\item a subgroup $G_0\leqq G$ such that its index is bounded by $C$, i.e. $|G:G_0|<C$,
\item a smooth projective complex variety $Y$, such that
\item $G_0$ acts on $Y$ via biregular automorphisms,
\item $N_0=G_0\cap N$ acts trivially on $Y$, 
i.e. the $G_0$-action on $Y$ descends to a $G_0/N_0$-action; moreover, there exists
\item a vector bundle $E_Y\to Y$, on which $G_0$ acts faithfully via vector bundle automorphisms (in such a way that the $G_0$-action is compatible with the $G_0$-action on $Y$), moreover
\item $\dim E_Y=\dim E_X$, i.e. the dimension of the total space of the vector bundle over $Y$ 
is equal to the dimension of the total space of the vector bundle over $X$.
\end{itemize}
\end{lem}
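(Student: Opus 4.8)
The plan is to deduce Lemma \ref{Bundle1} by iterating Lemma \ref{YW}, exploiting that each application strictly lowers the dimension of the base variety, so the process is forced to terminate in the ``$N$ acts trivially'' alternative. Concretely, I would construct a finite sequence of data $\bigl(G^{(i)},X^{(i)},Z^{(i)},f^{(i)},N^{(i)},E^{(i)}\bigr)$ by setting $G^{(0)}=G$, $X^{(0)}=X$, $Z^{(0)}=Z$, $f^{(0)}=f$, $N^{(0)}=N$, $E^{(0)}=E_X$ (which by hypothesis satisfies the assumptions of Lemma \ref{YW}), and then applying Lemma \ref{YW} repeatedly. If $N^{(i)}$ acts trivially on $X^{(i)}$ the process stops; otherwise Lemma \ref{YW} returns a subgroup $G^{(i+1)}\leqq G^{(i)}$ of index bounded by $C(\dim X^{(i)})$, smooth projective varieties $X^{(i+1)}$, $Z^{(i+1)}$ with $G^{(i+1)}$-actions, a $G^{(i+1)}$-equivariant dominant morphism $f^{(i+1)}\colon X^{(i+1)}\to Z^{(i+1)}$ with rationally connected general fibres, a vector bundle $E^{(i+1)}\to X^{(i+1)}$ on which $G^{(i+1)}$ acts faithfully via vector bundle automorphisms, and the abelian normal subgroup $N^{(i+1)}:=G^{(i+1)}\cap N^{(i)}\unlhd G^{(i+1)}$ acting trivially on $Z^{(i+1)}$, with $\dim X^{(i+1)}<\dim X^{(i)}$ and $\dim E^{(i+1)}=\dim E^{(i)}$. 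Since a dominant morphism forces $\dim X^{(i+1)}\geqq\dim Z^{(i+1)}$, this new sextuple again meets the hypotheses of Lemma \ref{YW}, so the iteration is legitimate.

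The integers $\dim X^{(0)}>\dim X^{(1)}>\cdots$ form a strictly decreasing sequence of non-negative integers with $\dim X^{(0)}=d$, so it has length at most $d+1$; hence there is a smallest index $k\leqq d$ at which the process stops, i.e.\ at which $N^{(k)}$ acts trivially on $X^{(k)}$ (if $\dim X^{(i)}$ ever reached $0$ without stopping, the alternative branch of Lemma \ref{YW} would demand a variety of negative dimension, which is impossible, so stopping is unavoidable). An easy induction using $G^{(i+1)}\leqq G^{(i)}$ gives $N^{(i)}=G^{(i)}\cap N$ for all $i$, so $N^{(k)}=G^{(k)}\cap N$ has the required form. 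I would then output $G_0=G^{(k)}$, $Y=X^{(k)}$, $N_0=N^{(k)}=G_0\cap N$ and $E_Y=E^{(k)}$: $Y$ is smooth projective with a $G_0$-action, $N_0$ is abelian, normal in $G_0$, and acts trivially on $Y$, the bundle $E_Y\to Y$ carries a faithful $G_0$-action via vector bundle automorphisms, and $\dim E_Y=\dim E_X$ because this dimension is preserved at every step.

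For the constant: the dimensions $\dim X^{(i)}$ are pairwise distinct and lie in $\{0,1,\dots,d\}$, so the accumulated index satisfies $|G:G_0|=\prod_{i=0}^{k-1}|G^{(i)}:G^{(i+1)}|<\prod_{j=0}^{d}C(j)$, which depends only on $d$; this is the constant $C(d)$ claimed by the lemma. I do not expect any deep obstacle here, since the substantive geometric work is entirely inside Lemma \ref{YW}; the only thing needing care is checking that the iterated data genuinely re-satisfies the hypotheses of Lemma \ref{YW} at each stage and that the termination is forced by the strictly dropping base dimension, with the remaining verifications being a direct transcription of Lemma \ref{YW}'s output.
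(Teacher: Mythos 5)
Your proposal is correct and is exactly the paper's argument: the paper disposes of Lemma \ref{Bundle1} in one line (``follows from Lemma \ref{YW} and induction on $d$''), and your iteration of Lemma \ref{YW} with the strictly decreasing base dimension forcing termination, the identity $N^{(i)}=G^{(i)}\cap N$, and the accumulated index bound $\prod_{j=0}^{d}C(j)$ is precisely the intended induction, carried out with all verifications made explicit.
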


\begin{proof}
The lemma follows from Lemma \ref{YW} and induction on $d$.
\end{proof}

\begin{lem}
\label{Bundle2}
Let $d$ be a non-negative integer. There exists a constant $C=C(d)\in\mathbb{Z}^+$, only depending on $d$, such that, if 
\begin{itemize}
\item $X$ is a $d$ dimensional complex variety,
\item $G\leqq\Bir(X)$ is a finite subgroup of the birational automorphism group,  
\item $Z$ is a complex variety endowed a with a (not necessarily faithful) $G$-action via birational automorphisms,
\item $N\unlhd G$ is a normal Abelian subgroup such that $N$ acts trivially on $Z$, i.e. the $G$-action on $Z$ descends to a $G/N$-action,
\item $f:X\dashrightarrow Z$ is a $G$-equivariant dominant rational map with rationally connected general fibres,
\end{itemize}
then there exists
\begin{itemize}
\item a subgroup $G_0\leqq G$ such that its index is bounded by $C$, i.e. $|G:G_0|<C$,
\item a smooth projective complex variety $Y$, such that
\item $G_0$ acts on $Y$ via biregular automorphisms,
\item $N_0=G_0\cap N$ acts trivially on $Y$, 
i.e. the $G_0$-action on $Y$ descends to a $G_0/N_0$-action; moreover, there exists
\item a vector bundle $E_Y\to Y$, on which $G_0$ acts faithfully via vector bundle automorphisms (in such a way that the $G_0$-action is compatible with the $G_0$-action on $Y$), furthermore
\item $\dim E_Y=d=\dim X$,  i.e. the dimension of the total space of the vector bundle over $Y$ 
is equal to the dimension of $X$.
\end{itemize}
\end{lem}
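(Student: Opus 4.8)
The plan is to reduce the statement to Lemma~\ref{Bundle1} by first regularizing the birational data, and then feeding in the rank-zero vector bundle over $X$ itself, so that the dimension bookkeeping $\dim E_X=\dim X$ becomes automatic.

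First I would pass to projective models. Since $\Bir(X)$ depends only on the birational class of $X$, and likewise the (possibly non-faithful) $G$-action on $Z$ via birational automorphisms depends only on the birational class of $Z$, I may assume from the outset that $X$ and $Z$ are projective varieties carrying $G$-actions by birational automorphisms, with $f\colon X\dashrightarrow Z$ a $G$-equivariant dominant rational map whose general fibres are rationally connected. Now apply Lemma~\ref{reg} to $(G,X,Z,f)$: this produces smooth projective complex varieties $X^{*}$ and $Z^{*}$ with biregular $G$-actions, $G$-equivariantly birational to $X$ and $Z$ respectively, together with a $G$-equivariant dominant morphism $f^{*}\colon X^{*}\to Z^{*}$ fitting into the commutative square of that lemma. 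Three points then need checking, all routine: (i) the general fibres of $f^{*}$ are birational to the general fibres of $f$, hence rationally connected; (ii) the $G$-action on $X^{*}$ is faithful, since $G\leqq\Bir(X)$ acts faithfully on $X$ and this faithful action is transported along the $G$-equivariant birational map $X\dashrightarrow X^{*}$; (iii) $N$ still acts trivially on $Z^{*}$ --- indeed, for $n\in N$ the $G$-equivariance of the birational map $Z\dashrightarrow Z^{*}$ forces $n_{Z^{*}}$ to agree with $\id_{Z^{*}}$ on a dense open subset, hence $n_{Z^{*}}=\id_{Z^{*}}$.

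Next I would introduce the vector bundle $E_X\to X^{*}$ of rank zero, i.e. $E_X=X^{*}$ with every fibre the zero vector space. The faithful biregular $G$-action on $X^{*}$ is then a faithful $G$-action on $E_X$ via vector bundle automorphisms (the condition of respecting the vector bundle structure being vacuous in rank zero), and $\dim E_X=\rk E_X+\dim X^{*}=d$.

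Finally, apply Lemma~\ref{Bundle1} to the data $(G,X^{*},Z^{*},f^{*},N,E_X)$. It yields a constant $C=C(d)$, a subgroup $G_0\leqq G$ with $|G:G_0|<C$, a smooth projective complex variety $Y$ with a biregular $G_0$-action on which $N_0=G_0\cap N$ acts trivially, and a vector bundle $E_Y\to Y$ carrying a faithful $G_0$-action by vector bundle automorphisms, with $\dim E_Y=\dim E_X$. Since $\dim E_X=d=\dim X$, this gives $\dim E_Y=d=\dim X$, which is precisely the remaining clause; and as $C$ depends only on $d$, the lemma follows. I do not expect a genuine obstacle here: the substance has been placed in Lemmas~\ref{reg} and~\ref{Bundle1}, and the only real content of this step is the standard birational reduction together with the observation that the rank-zero bundle converts ``the total space keeps its dimension'' into ``the total space has dimension $\dim X$''.
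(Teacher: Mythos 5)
Your proposal matches the paper's own proof: both regularize via Lemma \ref{reg} and then apply Lemma \ref{Bundle1} to the rank-zero bundle $E_X\cong X$, so that $\dim E_Y=\dim E_X=d$ comes for free. The extra routine checks you spell out (rational connectedness of the fibres of $f^{*}$, faithfulness on $X^{*}$, triviality of the $N$-action on $Z^{*}$) are correct and are left implicit in the paper.
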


\begin{proof}
Because of the lemma about smooth regularizations (Lemma \ref{reg}), 
it is enough to prove our lemma when $X$ and $Z$ are smooth projective varieties, $G\leqq\Aut(X)$ is a finite subgroup of the biregular automorphism group of $X$, 
$G$ acts on $Z$ via biregular automorphisms, and  $f$ is a dominant $G$-equivariant morphism. 
So we can assume these extra conditions. We setup $E_X\to X$ to be the vector bundle with zero dimensional fibres, i.e. $E_X\cong X$. Then we can apply Lemma \ref{Bundle1} to finish the proof.
\end{proof}

\begin{lem}
\label{MainLemma}
Let $d$ be a non-negative integer. There exists a constant $C=C(d)\in\mathbb{Z}^+$, only depending on $d$, such that, if 
\begin{itemize}
\item $X$ is a $d$ dimensional smooth projective complex variety,
\item $G\leqq\Bir(X)$ is a finite subgroup of the birational automorphism group,  
\item the pair $(\phi, Z)$ is the MRC fibration,
\item $N\unlhd G$ is a normal Abelian subgroup such that $N$ acts trivially on $Z$, i.e. the $G$-action on $Z$ descends to a $G/N$-action,
\end{itemize}
then there exists
\begin{itemize}
\item a subgroup $G_0\leqq G$ such that its index is bounded by $C$, i.e. $|G:G_0|<C$,
\item a smooth projective complex variety $Y$, such that
\item $G_0$ acts on $Y$ via biregular automorphisms,
\item $N_0=G_0\cap N$ acts trivially on $Y$, 
i.e. the $G_0$-action on $Y$ descends to a $G_0/N_0$-action; moreover, there exists
\item a vector bundle $E_Y\to Y$, on which $G_0$ acts faithfully via vector bundle automorphisms (in such a way that the $G_0$-action is compatible with the $G_0$-action on $Y$), furthermore
\item $\dim E_Y=d=\dim X$,  i.e. the dimension of the total space of the vector bundle over $Y$ 
is equal to the dimension of $X$.
\end{itemize}
\end{lem}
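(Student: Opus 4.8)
The plan is to observe that Lemma \ref{MainLemma} is essentially the specialization of Lemma \ref{Bundle2} to the case where the $G$-equivariant rationally connected fibration is the MRC fibration of $X$ itself.

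First I would invoke Theorem \ref{MRC}: since $X$ is smooth and projective, hence proper, it has an MRC fibration $\phi\colon X\dashrightarrow Z$, and by construction the general fibres of $\phi$ are rationally connected. Next I would use the functoriality of the MRC fibration in the form of Corollary \ref{GMRC}: the $G$-action on $X$ coming from the inclusion $G\leqq\Bir(X)$ induces a $G$-action on $Z$ via birational automorphisms for which $\phi$ is $G$-equivariant. By hypothesis the normal Abelian subgroup $N\unlhd G$ acts trivially on $Z$, so the $G$-action on $Z$ descends to a $G/N$-action, which is exactly the shape of the input data required by Lemma \ref{Bundle2}.

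With this setup, all the hypotheses of Lemma \ref{Bundle2} hold with $f=\phi$: $X$ is a $d$-dimensional complex variety, $G\leqq\Bir(X)$ is finite, $Z$ is a complex variety carrying a (not necessarily faithful) $G$-action via birational automorphisms, $N\unlhd G$ is normal Abelian acting trivially on $Z$, and $\phi\colon X\dashrightarrow Z$ is a $G$-equivariant dominant rational map with rationally connected general fibres. Applying Lemma \ref{Bundle2} then produces the constant $C=C(d)$, a subgroup $G_0\leqq G$ with $|G:G_0|<C$, a smooth projective complex variety $Y$ with a $G_0$-action via biregular automorphisms on which $N_0=G_0\cap N$ acts trivially, and a vector bundle $E_Y\to Y$ on which $G_0$ acts faithfully via vector bundle automorphisms with $\dim E_Y=d=\dim X$. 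These are precisely the objects asserted in Lemma \ref{MainLemma}, so the proof concludes.

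I do not expect a genuine obstacle at this final stage, because the substantive birational input --- the repeated runs of the relative MMP, boundedness of Fano varieties (Corollary \ref{Fam2}), and the Hacon--McKernan style pullback construction of rationally connected fibrations (Lemma \ref{PullBack1}) --- has already been packaged into Lemmas \ref{YW}, \ref{Bundle1} and \ref{Bundle2}. The only point that genuinely needs care is checking that Corollary \ref{GMRC} really does supply a $G$-action on $Z$ compatible with $\phi$; this is legitimate precisely because the smoothness and properness of $X$ place us in the setting of Theorems \ref{MRC} and \ref{FunctMRC}, and the finiteness of $G$ plays no role in that verification.
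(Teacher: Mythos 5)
Your proposal is correct and matches the paper's own proof: the paper likewise observes that Corollary \ref{GMRC} makes $\phi$ a $G$-equivariant dominant rational map with rationally connected general fibres and then applies Lemma \ref{Bundle2} directly. No further verification is needed.
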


\begin{proof}
By the functoriality of the MRC fibration (see Corollary \ref{GMRC}), there is a natural induced $G$-action on $Z$ via birational automorphisms,
moreover $\phi$ is a $G$-equivariant dominant rational map with rationally connected fibres, therefore we can apply the previous lemma.
\end{proof}

\section{A Jordan type theorem on semilinear groups}
\label{JSL}
In this section we prove a Jordan type theorem on semilinear groups.

\begin{thm}
\label{groupmain}
Let $n$ and $m$ be positive integers. There exists a constant $J=J(n,m)\in\mathbb{Z}^+$, only depending on $n$ and $m$, such that if
\begin{itemize}
\item $k$  is a field of characteristic zero containing all roots of unity, 
\item $G$ is a finite subgroup of the semilinear group $\KL(n,k)\cong \GL(n)\rtimes \Aut k$, such that
\item every subgroup of $G$ can be generated by $m$ elements,
\item the image of the composite group homomorphism $G\hookrightarrow \KL(n,k)\twoheadrightarrow\Aut k$, denoted by $\Gamma$, is Abelian and fixes all roots of unity,
\end{itemize}
then there exists a subgroup  $H\leqq G$ with nilpotency class at most two and with index at most $J$. 
\end{thm}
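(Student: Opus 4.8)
The plan is to separate the linear and semilinear parts of $G$ and then run a weight-space argument. Write $G_0=G\cap\GL(n,k)=\ker\bigl(G\to\Aut k\bigr)$, a finite \emph{linear} group; it is normal in $G$ and $1\to G_0\to G\to\Gamma\to 1$ is exact with $\Gamma$ Abelian. Jordan's theorem on finite subgroups of $\GL(n,k)$ (applicable because $k$ has characteristic zero) gives an Abelian subgroup of $G_0$ of index bounded in terms of $n$ alone, and Chermak--Delgado theory (Theorem \ref{CD}) upgrades it to a \emph{characteristic} Abelian subgroup $A\leqq G_0$ with $|G_0:A|\leqq J_1(n)$. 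Being characteristic in the normal subgroup $G_0$, the subgroup $A$ is itself normal in $G$.

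Now I diagonalize $A$. As $A$ is finite Abelian, $k$ has characteristic zero and contains all roots of unity, there is a weight decomposition $k^n=\bigoplus_{i=1}^{r}V_i$ with $r\leqq n$, the $V_i$ being the eigenspaces of $A$ attached to pairwise distinct characters $\chi_i\colon A\to k^{\times}$ whose values are roots of unity. Take $g\in G$ with associated field automorphism $\sigma_g$ and $v\in V_i$; the semilinear relation gives $a(gv)=\sigma_g\bigl(\chi_i(g^{-1}ag)\bigr)\,gv$ for every $a\in A$, and since $\Gamma$ fixes all roots of unity, $\sigma_g$ fixes each $\chi_i(g^{-1}ag)$, so $a(gv)=\chi_i(g^{-1}ag)\,gv$. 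Thus $g$ carries $V_i$ onto the weight space of the character $a\mapsto\chi_i(g^{-1}ag)$, which must be one of the $\chi_j$; hence $G$ permutes $\{V_1,\dots,V_r\}$. Let $G_2$ be the kernel of the resulting homomorphism $G\to\Sym_r$, so $|G:G_2|\leqq n!$ and $A\leqq G_2$. Re-running the computation for $g\in G_2$, where $gv\in V_i$ already, forces $\chi_i(g^{-1}ag)=\chi_i(a)$ for all $i$; thus $g^{-1}ag$ and $a$ act identically on $k^n=\bigoplus_i V_i$, so $g$ commutes with $a$. Therefore $A\leqq\Z(G_2)$ -- the root-of-unity hypothesis has converted the normal Abelian subgroup $A$ into a \emph{central} one.

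It remains to cut $G_2$ down to a nilpotent subgroup of class at most two of bounded index. Consider $\overline{G_2}=G_2/A$; it fits in $1\to(G_0\cap G_2)/A\to\overline{G_2}\to\Gamma'\to 1$ with $\Gamma'\leqq\Gamma$ Abelian and $\bigl|(G_0\cap G_2)/A\bigr|\leqq|G_0:A|\leqq J_1(n)$. Every subgroup of $\overline{G_2}$ is generated by at most $m$ elements (lift generators to $G$), so Lemma \ref{DN2} yields an Abelian subgroup $\overline{H}\leqq\overline{G_2}$ with $|\overline{G_2}:\overline{H}|\leqq E\bigl(J_1(n),m\bigr)$. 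Let $H\leqq G_2$ be the preimage of $\overline{H}$. Then $H/A=\overline{H}$ is Abelian while $A\leqq\Z(H)$, so $H$ is a central extension of two Abelian groups, hence nilpotent of class at most two by Proposition \ref{CE}. Finally $|G:H|=|G:G_2|\cdot|G_2:H|\leqq n!\cdot E\bigl(J_1(n),m\bigr)$ depends only on $n$ and $m$, which is the desired constant $J(n,m)$.

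The one genuinely delicate step is the centralizing argument: one has to notice that the semilinear twist $\sigma_g$ fixes the (finitely many, root-of-unity valued) weight characters of $A$, so that after discarding the at most $n!$ permutations of the weight spaces the normal Abelian $A$ becomes central -- and it is exactly this that produces nilpotency class two rather than merely bounded solvability. Everything else is bookkeeping with the tools already set up: Jordan's theorem, Chermak--Delgado (Theorem \ref{CD}) and the extension lemma (Lemma \ref{DN2}); the degenerate cases ($A$ trivial, $r=1$, $\Gamma$ trivial) are handled by the same argument.
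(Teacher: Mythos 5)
Your proposal is correct and follows essentially the same route as the paper's own proof: Jordan's theorem plus Chermak--Delgado to obtain a characteristic Abelian $A$ normal in $G$, the common-eigenspace decomposition together with the root-of-unity hypothesis to make $A$ central after passing to the index-at-most-$n!$ kernel of the permutation action on the weight spaces, and finally Lemma \ref{DN2} applied to the quotient by $A$ combined with Proposition \ref{CE}. Your explicit semilinear computation $a(gv)=\sigma_g\bigl(\chi_i(g^{-1}ag)\bigr)gv$ merely spells out in detail the centralizing step that the paper states more briefly.
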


Before proving the theorem we recall a slightly strengthened version of Jordan's original theorem on linear groups.
\begin{thm}
\label{Jor}
Let $n$ be a positive integer. There exists a constant $J=J(n)\in\mathbb{Z}^+$, only depending on $n$, 
such that if 
\begin{itemize}
\item $k$  is a field of characteristic zero, 
\item $G$ is a finite subgroup of the general linear group $\GL(n,k)$,
\end{itemize}
then there exists a characteristic  Abelian subgroup  $A\leqq G$ of index at most $J$.
\end{thm}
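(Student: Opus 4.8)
The plan is to derive the characteristic version from the classical Jordan theorem, which asserts the existence of a constant $J_0=J_0(n)$ such that every finite subgroup $G\leqq\GL(n,k)$ contains a \emph{normal} abelian subgroup $A_0\unlhd G$ of index at most $J_0$. (The normality is automatic from the standard proof: one takes $A_0$ to be generated by the elements of $G$ lying sufficiently close to the identity, which is a normal subgroup.) The remaining task is to upgrade ``normal abelian of bounded index'' to ``characteristic abelian of bounded index'', and this is exactly what Chermak--Delgado theory provides, as already recorded in Theorem \ref{CD} of the paper.

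First I would reduce to the case where $k$ is algebraically closed, since embedding $k\hookrightarrow\overline{k}$ gives $G\leqq\GL(n,\overline{k})$ and a characteristic abelian subgroup computed there is still characteristic in the abstract group $G$; the ambient field plays no further role. Then I would invoke the classical Jordan theorem over $\overline{k}$ (equivalently, since $G$ is finite, over a finitely generated hence embeddable-in-$\mathbb{C}$ subfield) to obtain an abelian subgroup $A_0\leqq G$ with $|G:A_0|\leqq J_0(n)$. Next, applying Theorem \ref{CD} to the finite group $G$, there is a characteristic abelian subgroup $M\leqq G$ with $|G:M|\leqq|G:A_0|^2\leqq J_0(n)^2$. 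Setting $J(n)=J_0(n)^2$ and $A=M$ finishes the argument.

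The only point requiring a word of care is that Theorem \ref{CD}'s bound is stated in terms of the index of \emph{an arbitrary} abelian subgroup, so one must feed it the subgroup $A_0$ produced by classical Jordan; there is no circularity, since Chermak--Delgado theory (the construction of the Chermak--Delgado subgroup as the set of elements maximizing $|C_G(x)|\cdot|\langle x\rangle^G|$-type measures, cf.\ Theorem 1.41 in \cite{Is08}) is purely group-theoretic and independent of the linear structure. I do not expect any genuine obstacle here: this is a routine combination of a classical theorem with a packaged group-theoretic fact, both already available in the excerpt. If one wished to avoid citing the \emph{normal} form of classical Jordan, one could instead cite any version giving an abelian subgroup of bounded index and let Theorem \ref{CD} do all the work of producing the characteristic subgroup.
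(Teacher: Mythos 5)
Your proposal is correct and is essentially identical to the paper's own proof: the author likewise combines the classical Jordan theorem (Theorem 2.3 in \cite{Br11}) with the Chermak--Delgado result (Theorem \ref{CD}) to pass from an Abelian subgroup of bounded index to a characteristic one. The extra remarks about normality and reduction to an algebraically closed field are harmless but not needed.
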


\begin{proof}
By Jordan's theorem on linear groups (Theorem 2.3 in \cite{Br11}) we can find an Abelian subgroup of bounded index. 
Than by Chermak-Delgado theory (Theorem \ref{CD}) we can find a characteristic Abelian subgroup of bounded index.
\end{proof}

\begin{proof}[Proof of Theorem \ref{groupmain}]
Fix $n, m, k, G$ and $\Gamma$ as in the theorem.
Consider the short exact sequence of groups given by
$$1\to N\to G\to\Gamma\to 1$$
where $N=\GL(n,k)\cap G$ and $\Gamma=\Imag(G\to\Aut k)$. By Theorem \ref{Jor}, $N$ contains a characteristic Abelian subgroup $A$ of index bound by $J_N=J_N(n)\in\mathbb{Z}^+$. 
Since $A$ is characteristic in $N$ and $N$ is normal in $G$, $A$ is a normal subgroup of $G$.\\
Consider the natural semilinear action of $G$ on the vector space $V=k^n$. Since $A$ is a finite Abelian subgroup of $\GL(V)$ and the ground field $k$ contains all roots of unity,
$A$ decomposes $V$ into common eigenspaces of its elements: $V=V_1\oplus V_2\oplus...\oplus V_r$ $(r\leqq n)$. 
As $A$ is normal in $G$, $G$ respects this decomposition, i.e. $G$ acts on the set of linear subspaces $\{V_1,V_2,...,V_r\}$ by permutations.
The kernel of this group action, denoted by $G_1$, is a bounded index subgroup of $G$ (indeed $|G:G_1|\leqq r!\leqq n!$). Furthermore, $A$ is central in $G_1$, i.e. $A\leqq \Z(G_1)$. 
To see this, notice that on an arbitrary fixed eigenspace $V_i$ $(1\leqq i\leqq r)$ $A$ acts by scalar matrices in such a way that all scalars are drawn from the set of the roots of unity. Since $G_1$ leaves $V_i$ invariant by definition and 
$\Imag(G_1\to\Aut k)$ fixes all roots of unity, our claim follows. After replacing $G$ with the bounded index subgroup $G_1$, we can assume that $A\leqq \Z(G)$.\\
As $A$ is a central subgroup of $G$, we can consider the quotient group $\overline{G}=G/A$. 
We only need to prove that $\overline{G}$ has a bounded index Abelian subgroup (Proposition \ref{CE}). 
Let $\overline{N}=N/A$, and consider the short exact sequence of groups
$$1\to \overline{N}\to\overline{G}\to\Gamma\to 1.$$ 
The number of elements of $\overline{N}$ is bounded by $J_N(n)$, by the definition of $A$, $\Gamma$ is Abelian, and every finite subgroup of $\overline{G}$ can be generated by $m$ elements, by the definition of $G$.
Hence we can apply Lemma \ref{DN2} to find a bounded index Abelian subgroup in $\overline{G}$, which finishes the proof.
\end{proof}

\section{Proof of the Main Theorem}
\label{PMT}

In this section, using the techniques developed in the previous sections, 
we prove our main theorem.

\begin{thm}
\label{VB}
Let $X$ be a smooth projective complex variety. Let the pair $(Z,\phi)$ be the MRC fibration of $X$ and $e=\dim X-\dim Z$ be the relative dimension.
There exists a constant $C=C(e, Z)$, only depending on $e$ and the birational class of $Z$, such that if
\begin{itemize}
\item $G\leqq\Bir(X)$ is a finite subgroup of the birational automorphism group,
\end{itemize}
then there exist
\begin{itemize}
\item a subgroup $G_0\leqq G$ of index bounded by $C$, i.e. $|G:G_0|<C$,
\item an Abelian normal subgroup $N_0\unlhd G_0$, such that
\item $G_0/N_0$ is Abelian, furthermore there exist
\item a smooth projective complex variety $Y$, such that
\item $G_0$ acts on $Y$ via biregular automorphisms,
\item $N_0$ acts trivially on $Y$, 
i.e. the $G_0$-action on $Y$ descends to the Abelian $G_0/N_0$-action, moreover, there exists
\item a vector bundle $E_Y\to Y$, on which $G_0$ acts faithfully via vector bundle automorphisms (in such a way that the $G_0$-action is compatible with the $G_0$-action on $Y$), and
\item $\dim E_Y=\dim X$,  i.e. the dimension of the total space of the vector bundle over $Y$ 
is equal to the dimension of $X$.
\end{itemize}
\end{thm}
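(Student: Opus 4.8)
The plan is to assemble the theorem from the two principal results already established: the group-structural Theorem~\ref{AbyA} on finite subgroups of $\Bir(X)$, and the geometric construction of Lemma~\ref{MainLemma}, which converts an Abelian normal subgroup acting trivially on the MRC base into a faithful vector bundle action. There is no new geometry to do here; the work is in checking that the hypotheses match up and that the constants depend only on the allowed data $(e,Z)$.

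First I would apply Theorem~\ref{AbyA} to the finite subgroup $G\leqq\Bir(X)$. This produces a subgroup $G_1\leqq G$ with $|G:G_1|<E(e,Z)$, together with an Abelian normal subgroup $N_1\unlhd G_1$ which acts trivially on the non-uniruled base $Z$ and for which the quotient $G_1/N_1$ is Abelian. The pair $(G_1,N_1)$ then satisfies exactly the hypotheses of Lemma~\ref{MainLemma}: $X$ is a smooth projective complex variety of dimension $d=\dim X=e+\dim Z$, the pair $(\phi,Z)$ is its MRC fibration, $G_1$ is a finite subgroup of $\Bir(X)$, and $N_1\unlhd G_1$ is a normal Abelian subgroup acting trivially on $Z$.

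Next I would invoke Lemma~\ref{MainLemma} with the data $(G_1,N_1)$. It returns a subgroup $G_0\leqq G_1$ with $|G_1:G_0|<C(d)$, a smooth projective complex variety $Y$ equipped with a $G_0$-action via biregular automorphisms on which $N_0=G_0\cap N_1$ acts trivially, and a vector bundle $E_Y\to Y$ on which $G_0$ acts faithfully via vector bundle automorphisms with $\dim E_Y=d=\dim X$. It then remains to verify the group-theoretic bookkeeping. Since $N_1$ is Abelian, so is its subgroup $N_0=G_0\cap N_1$; since $N_1\unlhd G_1$ and $G_0\leqq G_1$, we get $N_0\unlhd G_0$. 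By the second isomorphism theorem $G_0/N_0=G_0/(G_0\cap N_1)\cong G_0N_1/N_1\leqq G_1/N_1$, which is Abelian, so $G_0/N_0$ is Abelian. Finally $|G:G_0|=|G:G_1|\cdot|G_1:G_0|<E(e,Z)\cdot C(d)$; as $d=\dim X=e+\dim Z$ is determined by $e$ and the birational class of $Z$, the product is a constant $C=C(e,Z)$ depending only on $e$ and the birational class of $Z$, as required.

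I do not expect a genuine obstacle: all the difficulty has been front-loaded into Theorem~\ref{AbyA} (which rests on the Jordan property of non-uniruled and rationally connected varieties, together with the Chermak--Delgado and generator-bound lemmas) and into Lemma~\ref{MainLemma} (which rests on the iterated application of Lemma~\ref{YW}, i.e. the relative MMP, boundedness of Fano varieties, the Hacon--McKernan pull-back technique for rationally connected fibrations, and the normal bundle construction). The one point needing a little care is confirming that the constant $C(d)$ emerging from Lemma~\ref{MainLemma} is controlled by $(e,Z)$ and not by $X$ itself; this is immediate from $\dim X=e+\dim Z$.
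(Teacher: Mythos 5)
Your proposal is correct and follows the paper's own proof exactly: apply Theorem~\ref{AbyA} to obtain $(G_1,N_1)$ and then feed this into Lemma~\ref{MainLemma}, with the routine verification that $N_0=G_0\cap N_1$ is Abelian and normal, that $G_0/N_0$ embeds into $G_1/N_1$, and that the index bound $E(e,Z)\cdot C(d)$ depends only on $(e,Z)$ since $d=e+\dim Z$. The paper leaves these bookkeeping checks implicit; you have simply spelled them out.
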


\begin{proof}
Let $G\leqq\Bir(X)$ be an arbitrary finite subgroup of the birational automorphism group of the fixed complex variety $X$. 
By Theorem \ref{AbyA}, we can find a bounded index subgroup $G_1\leqq G$ and an Abelian normal subgroup $N_1\unlhd G_1$ of the bounded index subgroup, such that
$G_1/N_1$ is Abelian and $N_1$ acts trivially on $Z$. By applying Lemma \ref{MainLemma} to the data consisting of
the non-negative integer $d=\dim X$, the complex variety $X$ and the finite groups $G_1$ and $N_1$ we can finish the proof.
\end{proof}

\begin{cor}
\label{emb}
Let $X$ be a smooth projective complex variety. Let the pair $(Z,\phi)$ be the MRC fibration of $X$ and $e=\dim X-\dim Z$ be the relative dimension.
There exist constants $C=C(e, Z), m=m(e, Z)$, only depending on $e$ and the birational class of $Z$, such that if
\begin{itemize}
\item $G\leqq\Bir(X)$ is a finite subgroup of the birational automorphism group,
\end{itemize}
then there exist
\begin{itemize}
\item a subgroup $G_0\leqq G$ of index bounded by $C$, i.e. $|G:G_0|<C$, such that
\item every subgroup of $G_0$ can be generated by $m$ elements, moreover there exist
\item a function field $k$ of characteristic zero containing all roots of unity,
\item a non-negative integer $n$, such that $n\leqq \dim X$, and
\item an embedding of groups $G_0\hookrightarrow \KL(n,k)\cong \GL(n)\rtimes \Aut k$, such that
\item the image of the composite group homomorphism $G_0\hookrightarrow \KL(n,k)\twoheadrightarrow\Aut k$, denoted by $\Gamma$, is Abelian and fixes all roots of unity.
\end{itemize}
\end{cor}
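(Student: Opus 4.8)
The plan is to obtain the statement as a direct consequence of Theorem \ref{VB} (which produces the vector bundle) together with Theorem \ref{m} (which bounds the number of generators), the only genuine work being to pass from the $G_0$-equivariant vector bundle to a semilinear representation over its function field.

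First I would apply Theorem \ref{VB} to the finite subgroup $G\leq\Bir(X)$. This yields a subgroup $G_0\leq G$ with $|G:G_0|<C(e,Z)$, an Abelian normal subgroup $N_0\unlhd G_0$ with $G_0/N_0$ Abelian, a smooth projective complex variety $Y$ equipped with a biregular $G_0$-action on which $N_0$ acts trivially, and a vector bundle $E_Y\to Y$ carrying a faithful $G_0$-action by vector bundle automorphisms with $\dim E_Y=\dim X$. Set $k=\mathbb{C}(Y)$; since $\mathbb{C}\subseteq k$, this is a function field of characteristic zero containing all roots of unity. Put $n=\rk E_Y=\dim E_Y-\dim Y=\dim X-\dim Y$, so that $0\leq n\leq\dim X$.

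Next I would pass to the generic fibre $E_Y\times_Y\Spec k$, which is an $n$-dimensional $k$-vector space; after choosing a $k$-basis, identify it with $k^n$. The biregular $G_0$-action on $Y$ induces an action of $G_0$ on $k$ by $\mathbb{C}$-algebra automorphisms, and the vector bundle automorphisms of $E_Y$ restrict on the generic fibre to $k$-semilinear automorphisms compatible with this field action; this produces a group homomorphism $G_0\to\KL(n,k)\cong\GL(n)\rtimes\Aut k$. To see it is injective, suppose $g\in G_0$ lies in the kernel. Then $g$ acts trivially on $k$, hence trivially on $Y$ (a biregular automorphism of a variety inducing the identity on the function field is the identity), and $g$ acts as the identity $k$-linear map on the generic fibre of $E_Y$; since $E_Y$ is an integral separated scheme and two automorphisms agreeing on a dense open subset coincide, $g$ acts trivially on all of $E_Y$, so $g$ is the identity because the $G_0$-action on $E_Y$ is faithful. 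Thus $G_0\hookrightarrow\KL(n,k)$.

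It remains to verify the properties of $\Gamma=\Imag\!\big(G_0\hookrightarrow\KL(n,k)\twoheadrightarrow\Aut k\big)$ and the generator bound. Since $N_0$ acts trivially on $Y$ it acts trivially on $k$, so the homomorphism $G_0\to\Aut k$ factors through the Abelian quotient $G_0/N_0$; hence $\Gamma$ is Abelian. Moreover the automorphisms of $k=\mathbb{C}(Y)$ arising from biregular automorphisms of the complex variety $Y$ fix $\mathbb{C}$ pointwise, so $\Gamma$ fixes all roots of unity. Finally, by Theorem \ref{m} there is a constant $m=m(e,Z)$, depending only on $e$ and the birational class of $Z$, such that every finite subgroup of $\Bir(X)$ can be generated by $m$ elements; in particular every subgroup of $G_0$, being a finite subgroup of $\Bir(X)$, can be so generated. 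Taking $C=C(e,Z)$ from Theorem \ref{VB} then completes the argument. The step I expect to be the main obstacle is the faithfulness of the generic-fibre representation: one must check carefully that a vector bundle automorphism trivial on the generic fibre is globally trivial, and keep the (contravariant) dictionary between the $G_0$-action on $Y$ and the induced action on $k=\mathbb{C}(Y)$ consistent, so that the target is genuinely $\KL(n,k)$ and its $\Aut k$-part is exactly the $\Gamma$ appearing in the statement; everything else is read off directly from Theorems \ref{VB} and \ref{m}.
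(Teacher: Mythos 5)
Your proposal is correct and follows essentially the same route as the paper: apply Theorem \ref{VB}, take the generic fibre of $E_Y\to Y$ as a $k(Y)$-vector space with a faithful semilinear $G_0$-action, read off that $\Gamma$ factors through the Abelian group $G_0/N_0$ and fixes $\mathbb{C}$, and invoke Theorem \ref{m} for the generator bound. The only difference is that you spell out the faithfulness of the generic-fibre representation (via density of the generic fibre and separatedness), which the paper asserts without detail.
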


\begin{proof}
Fix the complex variety $X$, and let $G\leqq\Bir(X)$ be an arbitrary finite subgroup of the birational automorphism group.  By Theorem \ref{m} the constant $m$ exists.
We can apply Theorem \ref{VB} to $X$ and $G$. From now on we use the notation of the theorem.
 Denote the generic fibre of the vector bundle $E_Y\to Y$ by $V$. $V$ is a $k(Y)$-vector space of dimension at most $\dim X$, and $G_0$ acts on $V$ faithfully via semilinear automorphisms.
Therefore $G_0$ embeds into  $\KL(V)\cong \GL(V)\rtimes \Aut k(Y)$. 
The group $\Gamma=\Imag(G_0\hookrightarrow \KL(V)\twoheadrightarrow\Aut k(Y))$ corresponds to the $G_0$-action on $Y$. 
Hence $\Gamma$ is Abelian, as the $G_0$-action on $Y$ descends to the action of the Abelian group $G_0/N_0$.  
Moreover $\Gamma$ fixes the field of the complex numbers,
in particular, it acts trivially on the set of the roots of unity. This finishes the proof. 
\end{proof}

\begin{thm}
\label{AlmostMain}
Let $X$ be a smooth projective complex variety. Let the pair $(Z,\phi)$ be the MRC fibration of $X$ and $e=\dim X-\dim Z$ be the relative dimension.
There exists a constant $J=J(e, Z)$, only depending on $e$ and the birational class of $Z$, such that if
\begin{itemize}
\item $G\leqq\Bir(X)$ is a finite subgroup of the birational automorphism group,
\end{itemize}
then there exists
\begin{itemize}
\item a nilpotent subgroup $G_0\leqq G$ of nilpotency class at most two and  of index at most $J$,
\end{itemize}
\end{thm}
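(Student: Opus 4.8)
The plan is to deduce Theorem \ref{AlmostMain} directly from the two structural results already established, namely Corollary \ref{emb} and Theorem \ref{groupmain}. Essentially all of the geometric content --- the MRC fibration, the $G$-equivariant MMP, boundedness of Fano varieties, the construction of a faithful $G_0$-equivariant vector bundle over a variety on which the normal subgroup acts trivially, and the passage to the generic fibre --- has been packaged into Corollary \ref{emb}, and the purely group-theoretic content into Theorem \ref{groupmain}. So the remaining task is just to bolt the two together and to keep careful track of how the constants depend on the data.

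In detail, I would fix the smooth projective complex variety $X$, its MRC fibration $(Z,\phi)$ and the relative dimension $e=\dim X-\dim Z$, and let $G\leqq\Bir(X)$ be an arbitrary finite subgroup. Applying Corollary \ref{emb} yields a subgroup $G_0\leqq G$ of index bounded by $C=C(e,Z)$ such that every subgroup of $G_0$ can be generated by $m=m(e,Z)$ elements, together with a field $k$ of characteristic zero containing all roots of unity, a non-negative integer $n\leqq\dim X$, and an embedding $G_0\hookrightarrow\KL(n,k)\cong\GL(n)\rtimes\Aut k$ whose image $\Gamma$ in $\Aut k$ is Abelian and fixes all roots of unity. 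Since $G$ is finite, so is $G_0$; identifying $G_0$ with its image in $\KL(n,k)$, this is a finite subgroup of $\KL(n,k)$ satisfying exactly the hypotheses of Theorem \ref{groupmain} with parameters $n$ and $m$. Hence Theorem \ref{groupmain} produces a subgroup $H\leqq G_0$ of nilpotency class at most two and of index at most $J(n,m)$ inside $G_0$. Then $H$ is a nilpotent subgroup of $G$ of class at most two, and
\[
|G:H|=|G:G_0|\cdot|G_0:H|< C(e,Z)\cdot J(n,m),
\]
which gives the required bounded index.

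The only point needing a little care is the dependence of the final constant. Corollary \ref{emb} delivers $C$ and $m$ depending only on $e$ and the birational class of $Z$, while Theorem \ref{groupmain} delivers $J(n,m)$ depending on $n$ and $m$; but $n\leqq\dim X=e+\dim Z$, so the dependence on $n$ can be absorbed into a dependence on $e$ and the birational class of $Z$ (and the dependence on $m$ is already of that form by Theorem \ref{m}, invoked through Corollary \ref{emb}). Setting $J=J(e,Z):=C(e,Z)\cdot\max_{n\leqq e+\dim Z}J(n,m(e,Z))$ then finishes the argument.

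I do not expect any genuine obstacle in this final step --- it is bookkeeping --- since the substantive difficulties of the paper are concentrated earlier: the construction in Corollary \ref{emb} of a faithful $G_0$-equivariant vector bundle whose base carries an Abelian action (which itself rests on Lemma \ref{MainLemma}, Lemma \ref{YW}, Theorem \ref{AbyA} and the MMP/boundedness machinery), and the Jordan-type Theorem \ref{groupmain} for semilinear groups. The present theorem is simply the place where these two streams are joined.
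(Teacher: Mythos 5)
Your proposal is correct and follows exactly the paper's own argument: the paper's proof of Theorem \ref{AlmostMain} is precisely the two-step combination of Corollary \ref{emb} followed by Theorem \ref{groupmain}. Your additional bookkeeping of the constants (in particular absorbing the dependence on $n\leqq\dim X$ into the dependence on $e$ and the birational class of $Z$) is a correct elaboration of what the paper leaves implicit.
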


\begin{proof}
By Corollary \ref{emb} we can embed a bounded index subgroup of $G$ into a semilinear group.
Than we can finish the proof by our theorem on semilinear groups (Theorem \ref{groupmain}).
\end{proof}

An immediate corollary is our main theorem.

\begin{proof}[Proof of the Theorem \ref{main}]
Let $X$ be an arbitrary variety over a field of characteristic zero. By Lemma \ref{C} we can assume that $X$ is a complex variety. 
We can also assume that $X$ is smooth and projective.
Let $(\phi, Z)$ be the MRC fibration and $e=\dim X-\dim Z$ be the relative dimension. Apply Theorem \ref{AlmostMain} to $X$
and observe that the relative dimension $e$ and the birational class of $Z$ only depend on the birational class of $X$.
This finishes the proof. 
\end{proof}

\end{document}